\definecolor{seagreen}{RGB}{46,139,87}
\definecolor{maroon}{RGB}{128,0,0}
\definecolor{darkviolet}{RGB}{148,0,211}
\definecolor{twelve}{RGB}{100,100,170}
\definecolor{thirteen}{RGB}{100,150,50}
\definecolor{fourteen}{RGB}{200,0,0}
\definecolor{fifteen}{RGB}{0,200,0}
\definecolor{sixteen}{RGB}{0,0,200}
\definecolor{seventeen}{RGB}{200,0,200}
\definecolor{eighteen}{RGB}{0,200,200}
\newcommand{\mmod}{\! \sslash \!}
\newcommand{\ull}[1]{\underline{#1}}
\newcommand{\mr}[1]{\mathrm{#1}}
\newcommand{\bra}[1]{\langle #1 \rangle}
\newcommand{\br}[1]{\overline{#1}}
\newcommand{\Z}{\mathbb{Z}}
\newcommand{\Q}{\mathbb{Q}}
\newcommand{\F}{\mathbb{F}}
\newcommand{\coker}{\mathrm{coker}}
\newcommand{\euscr}[1]{\EuScript{#1}}
\newcommand{\tBP}[1]{BP\bra{#1}}
\newcommand{\tmf}{\mathrm{tmf}}
\newcommand{\bo}{\mathrm{bo}}
\newcommand{\HZ}{\mr{H}\Z}
\def \HF2{\mr{H}\F_2}
\newcommand{\bu}{\mr{bu}}
\DeclareMathOperator{\Ext}{Ext}
\DeclareMathOperator{\im}{im}
\newcommand\floor[1]{\lfloor#1\rfloor}
\newcommand{\HZu}{\ull{\HZ}}
\newcommand{\tBPu}[1]{\ull{\tBP{#1}}}
\newcommand{\buu}{\ull{\bu}}
\def \AA0{\br{A \mmod A(0)}_*}
\def \AA2{A\mmod A(2)_*}
\def \AE2{(A\mmod E(2))_*}
\renewcommand{\AE}[1]{A\mmod E(#1)_*}
\DeclareMathOperator{\wt}{\mathrm{wt}}
\def \E2E1{E(2)\mmod E(1)_*}
\newcommand{\Stable}{\mathrm{Stable}}
\newcommand{\Pic}{\mathrm{Pic}}
 \newtheorem{thm}[equation]{Theorem}
 \newtheorem{cor}[equation]{Corollary}
 \newtheorem{lem}[equation]{Lemma}
 \newtheorem{prop}[equation]{Proposition}
 \newtheorem*{thm*}{Theorem}
 \newtheorem*{cor*}{Corollary}
 \newtheorem*{lem*}{Lemma}
 \newtheorem*{prop*}{Proposition}
  \newtheorem*{not*}{Notation}
 \theoremstyle{definition}
 \newtheorem{defn}[equation]{Definition}
 \newtheorem{ex}[equation]{Example}
 \newtheorem{rmk}[equation]{Remark}
\newtheorem*{defn*}{Definition}
\newtheorem*{ex*}{Example}
\newtheorem*{exs*}{Examples}
\newtheorem*{rmk*}{Remark}
\newtheorem*{claim*}{Claim}
\numberwithin{equation}{section}
\numberwithin{figure}{section}
\title{On $\tBP{2}$-cooperations}
\author{D.~ Culver}\address{University of Illinois, Urbana-Champaign}\email{dculver@illinois.edu}
\thanks{The author's work was partially supported by NSF grant DMS-1547292}
\begin{document}

\maketitle

\begin{abstract}
	In this paper we develop techniques to compute the cooperations algebra for the second truncated Brown-Peterson spectrum $\tBP{2}$. We prove that the cooperations algebra $\tBP{2}_*\tBP{2}$ decomposes as a direct sum of a $\F_2$-vector space concentrated in Adams filtration 0 and a $\F_2[v_0,v_1,v_2]$-module which is concentrated in even degrees and is $v_2$-torsion free. We also develop a recursive method which produces a basis for the $v_2$-torsion free part. 
\end{abstract}

\tableofcontents


\section{Introduction}

The purpose of this paper is to give a description of the algebra of cooperations for the second \emph{truncated Brown-Peterson spectrum}, denoted by $\tBP{2}$, at the prime 2. At chromatic height 1, the cooperations algebra of $\tBP{1}$ was computed by Adams in \cite{bluebook}. At the prime 2, the spectrum $\tBP{1}$ is the 2-localization of the connective complex $K$-theory spectrum, denoted by $\bu$, and when the prime is odd, $\tBP{1}$ is the Adams summand of connective complex $K$-theory. The mod $2$ homology of $\tBP{1}$ is $\AE{1}$ where $E(1)$ is the subalgebra of the Steenrod algebra $A$ generated by the Milnor primitives $Q_0$ and $Q_1$. To compute $\tBP{1}_*\tBP{1}$, Adams employed the Adams spectral sequence, and he observed that the $E_2$-page of the Adams spectral sequence for $\tBP{1}\wedge \tBP{1}$ has a non-canonical direct sum decomposition 
\[
\Ext_{E(1)_*}(\F_2,\AE{1})\simeq V\oplus \euscr{C}
\]
	where the subspace $V$ is concentrated in Adams filtration 0 and $\euscr{C}$ is $v_1$-torsion free. Adams also gave a complete description of $\euscr{C}$ in terms of Adams covers of $\Ext_{E(1)_*}(\F_2, \F_2)$.  

The interest in studying $\tBP{2}$-cooperations originates in Mark Mahowald's work on the Adams spectral sequence based on connective real $K$-theory, denoted as $\bo$. Armed with his calculation of $\bo_*\bo$, Mahowald proved the $2$-primary $v_1$-telescope conjecture in \cite{bo-res}. With Wolfgang Lellmann, he was able to compute the $\bo$-based Adams spectral sequence for the sphere, and showed that it collapses in a large range (cf. \cite{Mahowald_Lellmann}). These calcuations have been extended in \cite{BBBCX}. At chromatic height 2 and $p=2$, the role of $\bo$ is played by $\tmf$ and the role of $\bu$ is played by the spectrum $\tmf_1(3)$, in that it is a form of $\tBP{2}$ (cf. \cite{Lawson-Naumann}). Partial calculations of $\tmf_*\tmf$ have been achieved in \cite{BOSS}. 

The goal of this work is to compute the cooperations algebra for $\tmf_1(3)$. This is motivated by the fact that one can descend from $\tmf_1(3)$ to $\tmf$ through the Bousfield-Kan spectral sequence of the cosimplicial resolution
\begin{equation*}\label{cosimp}
\begin{tikzcd}
\tmf^{\wedge 2} \arrow[r] & \tmf_1(3)^{\wedge 2}\arrow[shift left,r]\arrow[shift right, r] & (\tmf_1(3)^{\wedge_{\tmf}2})^{\wedge 2}\arrow[r] \arrow[shift left=2,r]\arrow[shift right=2,r] & \cdots . 
\end{tikzcd}
\end{equation*}

Since the spectrum $\tmf_1(3)$ is a form of $\tBP{2}$, for the purposes of calculations, we can replace $\tmf_1(3)$ by $\tBP{2}$. Consequently, a natural choice for computing the cooperations algebra is the Adams spectral sequence 
\[
\Ext_{A_*}(\F_2,H_*(\tBP{2}\wedge \tBP{2};\F_2))\implies \tBP{2}_*\tBP{2}\otimes \Z^{\wedge}_2.
\]
 There are two main parts of this paper. The first is a structural result regarding the algebra $\tBP{2}_*\tBP{2}$. In particular, we will show there is a direct sum decomposition into a vector space $V$ concentrated in Adams filtration 0, and a $v_2$-torsion free component. The second is an inductive calculation of $\tBP{2}_*\tBP{2}$. This inductive calculation is similar to the one produced in \cite{BOSS}. Moreover, this decomposition of $\tBP{2}_*\tBP{2}$ implies that the methods developed in \cite{BBBCX} to calculate the $\bo$-ASS can be applied to the $\tBP{2}$-ASS. One of our goals for later work is to prove an analogous splitting for $\tmf_*\tmf$ and develop the $\tmf$-resolution as a computational device. 

\subsection*{Conventions}

We will let $A$ denote the mod 2 Steenrod algebra and $A_*$ its dual. We will let $\zeta_k$ denote the conjugate of the the generator $\xi_k$ in the dual Steenrod algebra $A_*$. We will also make the convention that $\zeta_0=1$. Given a Hopf algebra $B$ and a comodule $M$ over $B$, we will often abbreviate $\Ext_B(\F_2,M)$ to $\Ext_B(M)$. Homology and cohomology are implicitly with mod 2 coefficients. All spectra are implicitly 2-complete. 

Also, we will use the notation $E(n)$ to denote the subalgebra of $A$ generated by the Milnor primitives $Q_0, \ldots, Q_n$. This is in conflict with the standard notation for the Johnson-Wilson theories, but as these never arise in this paper, this will not present an issue.

\subsection*{Acknowledgements}
I would like to thank Mark Behrens for suggesting this project and for innumerable helpful discussions along the way. Special thanks also go to Stephan Stolz for many enlightening exchanges and for carefully reading an earlier draft of this paper. I would also like to thank Prasit Bhattacharya and Nicolas Ricka for several helpful conversations. I would also like to thank Doug Ravenel for suggesting that I prove the topological version of Theorem \ref{mainSS2}. I also thank Dylan Wilson and Paul VanKoughnett for pointing out an error in my original approach to proving Theorem \ref{mainSS2}. Finally, I owe the anonymous referee a great debt of gratitude. Their diligence in reading earlier drafts of this paper has led to substantial improvements.


\section{The Adams spectral sequence for $\tBP{2}\wedge \tBP{2}$}

In this section, we will prove $\tBP{2}_*\tBP{2}$ decomposes into a $v_2$-torsion and $v_2$-torsion free component. This will be accomplished through the Adams spectral sequence
\[
\Ext_{A_* }(\F_2,H_*(\tBP{2}\wedge \tBP{2}))\implies \tBP{2}_*\tBP{2}.
\]
In particular, we will begin by determining the structure of the $E_2$-page. Recall that the mod 2 homology of $\tBP{2}$ is given by
\[
H_*\tBP{2}\cong \AE{2}
\]
As a subalgebra of the dual Steenrod algebra, the homology of $\tBP{2}$ is explicitly given as 
\[
(A\sslash E(2))_*=\F_2[\zeta_1^2,\zeta_2^2,\zeta_3^2, \zeta_4,\zeta_5, \ldots],
\]
see \cite{OmegaWilson} for this calculation. By the K\"unneth theorem, we have 
\[
H_*(\tBP{2}\wedge \tBP{2})\cong  H_*\tBP{2}\otimes H_*\tBP{2}
\]
and hence, via a change of rings, we find that the $E_2$-term of this spectral sequence is 
\[
\Ext_{E(2)_*}(\F_2, \AE2).
\]
Here, the dual of $E(2)$ is given by
\[
E(2)_*\cong E(\zeta_1,\zeta_2,\zeta_3).
\]
The $E(2)_*$-comodule structure of $\AE2$ uniquely determines, and is uniquely determined by, a corresponding $E(2)$-module structure (given by the dual action of $E(2)$). Thus we may rewrite the $E_2$-page as 
\[
\Ext_{E(2)_*}(\F_2, \AE2)=\Ext_{E(2)}(\F_2,\AE2)
\]
where the right hand side corresponds to Ext of modules. In order to calculate the Adams spectral sequence, ASS, we need to calculate this Ext group of modules over $E(2)$. Recall that the Adams spectral sequence for $\tBP{2}$ takes the form 
\[
\Ext_{E(2)}(\F_2,\F_2)\implies \pi_*\tBP{2}
\]
and that the $E_2$-term is isomorphic to $\F_2[v_0,v_1,v_2]$. These are represented in the cobar complex by $Q_i$. The bidegrees $(s,t)$ of the $v_i$ are thus given by 
\[
|v_i| = (1, 2^{i+1}-1),
\] 
and so $v_i$ is detecting an element in degree $2^{i+1}-2$ in homotopy. These correspond to the usual generators $v_i$ in $\pi_*\tBP{2}$, with $v_0$ corresponding to $2$.

The main theorem of this section concerns the structure of $\Ext_{E(2)}(\F_2,\AE2)$ as a module over this three variable polynomial algebra.
\begin{thm}\label{mainSS2}
	The $E_2$-page of the ASS for $\tBP{2}\wedge \tBP{2}$ admits a decomposition as modules over $\F_2[v_0,v_1,v_2]$ as $\euscr{C}\oplus V$ where $\euscr{C}$ is $v_2$-torsion free and is concentrated in even $(t-s)$-degree, and $V$ is concentrated in Adams filtration 0.  
\end{thm}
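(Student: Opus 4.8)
The plan is to follow the template set by Adams's analysis of $\tBP{1}_*\tBP{1}$ and, more recently, the $\bo$-cooperations work in \cite{BOSS}, adapting it one chromatic level up. The starting point is the observation that $\AE2$ is an $E(2)$-module, so I would first seek a direct-sum decomposition of this module into ``small'' pieces over which $\Ext_{E(2)}(\F_2,-)$ is computable, ideally free $E(2)$-modules (contributing only to Adams filtration $0$ after taking $\Ext$, hence to $V$), suspensions of $\F_2$ (contributing a shifted copy of $\F_2[v_0,v_1,v_2]$, hence to $\euscr{C}$), and possibly suspensions of the subalgebras $E(1)$, $E(0)$, or $E(2)\mmod E(1)$ (which are $v_i$-torsion free for the appropriate $i$, and in particular $v_2$-torsion free, again feeding $\euscr{C}$). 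Concretely, I would grade $(A\mmod E(2))_* = \F_2[\zeta_1^2,\zeta_2^2,\zeta_3^2,\zeta_4,\zeta_5,\dots]$ by a weight function (analogous to the ``weight'' used for $\bo$) that is preserved by the $E(2)_*$-coaction, decompose into weight-graded summands, and analyze each summand as an $E(2)$-module. The $Q_i$ act on $\zeta_j^{2^\epsilon}$ by lowering indices in a controlled way, so each finite-weight piece is a finite-dimensional $E(2)$-module and I can try to identify its isomorphism type.

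The key steps, in order, would be: (1) make the weight grading precise and verify the coaction preserves it, reducing to a summand-by-summand analysis; (2) within each summand, use the explicit $Q_i$-action to exhibit as much of it as possible as a free $E(2)$-module by splitting off free submodules spanned by monomials in the $\zeta_4,\zeta_5,\dots$ together with the ``new'' top classes $\zeta_1^2,\zeta_2^2,\zeta_3^2$ where available; (3) identify the non-free residue as a sum of suspensions of $\F_2$ and of cyclic $E(2)$-modules of the form $\Si^? E(2)\mmod E(i)$, all of which are $v_2$-torsion free and, crucially, concentrated in even internal degree $t$ (so that, since $\Ext$ of $\F_2$ over $E(i)$ lives in degrees congruent to the dimensions of the $Q_j$'s, the resulting $\Ext$ is concentrated in even $t-s$); (4) assemble: $V$ is the $\Ext$ of the free part (filtration $0$), and $\euscr{C}$ is the $\Ext$ of everything else, which is $v_2$-torsion free because each summand individually is and the polynomial generator $v_2$ acts compatibly. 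Parity is tracked throughout by noting every generator $\zeta_j$ appearing has even internal degree (each is squared, or $\zeta_k$ with $k\ge 4$ has degree $2^k-1$ — here I would need to be careful, since $|\zeta_k|=2^k-1$ is odd, so the relevant statement is about $t-s$ after $\Ext$, using that in the torsion-free part the classes occur with the right filtration-to-degree relation inherited from $\F_2[v_0,v_1,v_2]$).

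I expect the main obstacle to be step (2)–(3): proving that the non-free part of each weight summand really is just a sum of suspensions of $\F_2$ and of $E(2)\mmod E(1)$-type modules, rather than some more complicated $E(2)$-module with mixed torsion. Over $E(1)$ the analogous bookkeeping (Adams's ``lightning flash'' patterns) is manageable, but $E(2)$ is an exterior algebra on three generators and its module category is genuinely wild, so a clean classification of the summands that arise is not automatic; the saving grace should be that only a restricted class of modules actually occurs here, dictated by the very rigid $Q_i$-action on a polynomial algebra on squares. A secondary subtlety is making the splitting sufficiently natural/compatible across weights that the $\F_2[v_0,v_1,v_2]$-module structure — in particular $v_2$-multiplication, which shifts weight — is visibly respected by the decomposition, so that $v_2$-torsion-freeness of $\euscr{C}$ follows globally and not just summand-wise. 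If a fully explicit identification of all summands proves too delicate, a fallback is to prove only what the theorem asserts: that the non-free part is $v_2$-torsion free (e.g.\ by showing $v_2$ acts injectively, perhaps via a Margolis-homology or Koszul-resolution argument with respect to $Q_2$) and concentrated in even $t-s$, without pinning down the precise module structure — the finer recursive basis being deferred to the later sections the introduction promises.
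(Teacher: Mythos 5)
There is a genuine gap, and it sits exactly where you predicted trouble: steps (2)--(3). First, your proposed list of building blocks is wrong in a way that matters. A summand of the form $\Sigma^{?}\E2E1$ is \emph{not} $v_2$-torsion free after taking Ext: by change of rings, $\Ext_{E(2)_*}$ of such a piece is $\F_2[v_0,v_1]$ with $v_2$ acting by zero (the restriction of $v_2$ to $E(1)$ vanishes), so it is entirely $v_2$-torsion and is spread over all Adams filtrations. If such summands really occurred in the non-free part, they could live in neither $V$ nor $\euscr{C}$ and the theorem would be contradicted rather than proved; so the plan of ``classify each weight summand of $\AE{2}$ over $E(2)$ into frees, suspensions of $\F_2$, and $E(2)\mmod E(1)$-type pieces'' cannot be assembled into a proof as stated, and (as you yourself suspect) no such classification over $E(2)$ is feasible or attempted in the paper. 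Second, weight alone does not isolate the free part. The paper isolates it by a different invariant, the \emph{length} (number of odd exponents of a monomial): $S\subseteq \AE{2}$ is the $E(2)$-submodule generated by monomials of length at least $3$. The technical heart is then a delicate zig-zag argument showing all three Margolis homology groups of $S$ vanish, so $S$ is free, hence injective over the finite Hopf algebra $E(2)$, hence splits off; $V=\Ext_{E(2)_*}(S)$ sits in filtration $0$. Nothing in your sketch supplies an analogue of this freeness proof, which is not automatic from the ``rigid $Q_i$-action.''

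For the complement $Q$, the paper in effect carries out your fallback, but the fallback needs concrete ingredients you have not supplied. One forms the $v_2$-Bockstein spectral sequence from the short exact sequence $0\to Q\to \E2E1\otimes Q\to \Sigma^{7}Q\to 0$, with $E_1=\Ext_{E(1)_*}(Q)\otimes\F_2[v_2]$ converging to $\Ext_{E(2)_*}(Q)$; since every differential behaves like a $d_1$ in Adams indexing, collapse---hence $v_2$-torsion-freeness and evenness of $\euscr{C}$---follows once $\Ext_{E(1)_*}(Q)$ is concentrated in even $t-s$. That evenness is where a classification is used, but over $E(1)$, where it exists: after splitting off a further $E(1)$-free piece $S'$ (generated by length-$2$ monomials, again via a Margolis homology computation), the remaining quotient decomposes weight-by-weight into \emph{invertible} $E(1)$-modules (both Margolis homologies one-dimensional), and by the Adams--Priddy Picard group theorem their Ext groups are Adams covers of $\Ext_{E(1)_*}(\F_2)$, concentrated in even degree. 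Merely asserting ``show $v_2$ acts injectively, perhaps via Margolis homology with respect to $Q_2$'' does not substitute for this two-stage descent: $M_*(Q;Q_2)$ by itself gives no direct control over $v_2$-multiplication in $\Ext_{E(2)_*}(Q)$, and without the evenness input there is nothing to force the Bockstein spectral sequence to collapse.
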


\begin{cor}
	The ASS for $\tBP{2}\wedge \tBP{2}$ collapses at $E_2$.
\end{cor}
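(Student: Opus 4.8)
The plan is to deduce the collapse from Theorem~\ref{mainSS2} together with the elementary bookkeeping of Adams differentials and the fact that the Adams spectral sequence for $\tBP{2}\wedge\tBP{2}$ is a module over the one for $\tBP{2}$. Recall that $d_r$ (for $r\ge 2$) sends $E_r^{s,t}$ to $E_r^{s+r,t+r-1}$, so it raises Adams filtration by $r\ge 2$ and lowers the stem $t-s$ by exactly one; in particular it reverses the parity of $t-s$. I would argue by induction on $r$ that $E_r\cong E_2$; since $E_2=\euscr{C}\oplus V$, the inductive step is just to show that $d_r\equiv 0$ on $E_r=E_2$.

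First I would record two consequences of the splitting. Because $\euscr{C}\oplus V$ is a decomposition of $\F_2[v_0,v_1,v_2]$-modules, both summands are submodules; since $v_0,v_1,v_2$ strictly raise Adams filtration while $V$ is concentrated in filtration $0$, this forces $v_0\cdot V=v_1\cdot V=v_2\cdot V=0$. Also, as $V$ lies entirely in filtration $0$, every element of positive Adams filtration lies in $\euscr{C}$, which is $v_2$-torsion free and concentrated in even stems. Finally, because $\tBP{2}$ is a homotopy ring spectrum, $\tBP{2}\wedge\tBP{2}$ is a module over it and hence its Adams spectral sequence is a module over that of $\tBP{2}$; since $v_0,v_1,v_2$ are permanent cycles there (they detect the homotopy classes of the same name), multiplication by each $v_i$ commutes with every $d_r$.

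Now for the inductive step; by additivity it suffices to treat $z\in\euscr{C}$ and $z\in V$ separately. If $z\in\euscr{C}^{s,t}$ then $t-s$ is even, whereas $d_rz\in E_r^{s+r,t+r-1}$ lies in the odd stem $t-s-1$; but $s+r\ge 2>0$, so this target group lies in positive filtration, hence inside $\euscr{C}$, which vanishes in odd stems, so $d_rz=0$. If $z\in V$ then $v_2z=0$, hence $v_2\cdot d_rz=d_r(v_2z)=0$; but $d_rz\in E_r^{s+r,t+r-1}$ again lies in positive filtration, hence in the $v_2$-torsion free module $\euscr{C}$, so $d_rz=0$. Therefore $d_r\equiv 0$, giving $E_{r+1}\cong E_r\cong E_2$ and closing the induction; the Corollary follows.

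The only non-formal input — and the point I expect to need the most care — is that the Adams spectral sequence for $\tBP{2}\wedge\tBP{2}$, regarded via (say) the left smash factor as a module over $\tBP{2}$, is a module over the Adams spectral sequence of $\tBP{2}$, with $d_r$ acting $\F_2[v_0,v_1,v_2]$-linearly because $v_0,v_1,v_2$ are permanent cycles; this is the usual multiplicativity of the Adams spectral sequence for module spectra over (homotopy) ring spectra. If one prefers to avoid even this, one can instead check directly that $V=\Ext^0_{E(2)}(\F_2,\AE2)$ is concentrated in even stems — the odd-degree polynomial generators $\zeta_4,\zeta_5,\dots$ of $\AE2$ are visibly not $E(2)$-invariant, and from this one deduces the same for $V$ — after which the parity observation alone annihilates every differential with no reference to the module structure.
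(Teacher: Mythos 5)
Your main argument is correct and is essentially the paper's proof: granting that the differentials are $\F_2[v_0,v_1,v_2]$-linear because $\tBP{2}\wedge\tBP{2}$ is a $\tBP{2}$-module, the parity of stems in $\euscr{C}$ kills differentials out of $\euscr{C}$, and $v_2V=0$ together with $v_2$-torsion-freeness of $\euscr{C}$ (plus the fact that positive-filtration classes lie in $\euscr{C}$) kills differentials out of $V$.

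However, the ``alternative'' in your final sentence is not valid, so you cannot dispense with the module-structure input. First, $V$ is not all of $\Ext^0_{E(2)_*}(\F_2,\AE2)$; it is $\Ext_{E(2)_*}(S)$, and $\euscr{C}=\Ext_{E(2)_*}(Q)$ has a nontrivial $0$-line too. More importantly, $\Ext^0$ (and $V$ itself) is \emph{not} concentrated in even stems: in $\AE2$ the parity of the degree of a monomial equals the parity of its length, and for any monomial $m$ of even length at least $4$ the element $Q_0Q_1Q_2m$ is a primitive of odd degree lying in $S$. For instance $Q_0Q_1Q_2(\zeta_4\zeta_5\zeta_6\zeta_7)$ is nonzero (the monomial $\zeta_4^{15}$ occurs in it exactly once) and sits in odd stem $225$. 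The observation that the generators $\zeta_4,\zeta_5,\dots$ are not $E(2)$-invariant says nothing about invariants built from sums of other monomials, so the parity argument alone cannot annihilate differentials originating in $V$; the $v_2$-linearity step of your main proof (and of the paper's) is genuinely needed.
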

\begin{proof}
	We will show how to prove this from the theorem. Suppose that $E_r$ is the first page in which we have nontrivial differentials, and let
	\[
	d_rx = y
	\]
	be such a nontrivial differential. Since $V$ is concentrated in $\Ext^0$, it follows that $y$ cannot be an element of $V$, as $y$ necessarily has Adams filtration at least $r$. Since $\euscr{C}$ is concentrated in even $(t-s)$-degree, it also follows that $x$ cannot be an element of $\euscr{C}$. Thus, we have $x\in V$ and $y\in \euscr{C}$. Since $\tBP{2}\wedge \tBP{2}$ is a $\tBP{2}$-module, the differentials in the ASS are linear over $\Ext_{E(2)}(\F_2)$. So multiplying by $v_2$ on the differential gives
	\[
	0=d_rv_2x = v_2y.
	\]
	As $y\in \euscr{C}$, that $v_2y=0$ implies $y=0$. And so there are no differentials on the $E_r$-page, contradiction. So this spectral sequence has no differentials.
\end{proof}

\begin{cor}
	The summand $V$ satisfies
	\[
	v_0V = v_1V = v_2V = 0.
	\]
\end{cor}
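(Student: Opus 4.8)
The plan is to deduce this directly from Theorem \ref{mainSS2} together with the Adams bigrading; no new computation is needed. The essential observation is that the splitting $\Ext_{E(2)}(\F_2,\AE2)\simeq\euscr{C}\oplus V$ provided by Theorem \ref{mainSS2} is a splitting of $\F_2[v_0,v_1,v_2]$-modules, so that $V$ is itself a $\F_2[v_0,v_1,v_2]$-submodule. In particular $v_iV\subseteq V$ for $i=0,1,2$.

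Next I would bring in the internal bigrading. The class $v_i\in\Ext_{E(2)}(\F_2,\F_2)$ has bidegree $(s,t)=(1,2^{i+1}-1)$, so multiplication by $v_i$ on $\Ext_{E(2)}(\F_2,\AE2)$ raises the Adams filtration $s$ by one. Applying this to the summand $V$, which by Theorem \ref{mainSS2} is concentrated in Adams filtration $0$, shows that $v_iV$ is concentrated in Adams filtration $\geq 1$.

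Finally I would combine the two facts: $v_iV\subseteq V$ while $v_iV\subseteq\Ext^{\geq 1}_{E(2)}(\F_2,\AE2)$, and $V$ meets $\Ext^{\geq 1}_{E(2)}(\F_2,\AE2)$ trivially since it lives entirely in filtration $0$; hence $v_iV=0$ for each $i$, which is the assertion $v_0V=v_1V=v_2V=0$. There is no genuine obstacle here — all of the content sits in Theorem \ref{mainSS2} — and the only point needing care is that one must invoke that the $\euscr{C}\oplus V$ decomposition is $\F_2[v_0,v_1,v_2]$-linear (and not merely a decomposition of bigraded $\F_2$-vector spaces), which is exactly the form in which Theorem \ref{mainSS2} is stated.
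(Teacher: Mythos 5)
Your proposal is correct and is essentially the paper's own argument: the paper likewise deduces the statement from the fact that $V$ is concentrated in Adams filtration $0$ while $v_0,v_1,v_2$ have filtration $1$, with the $\F_2[v_0,v_1,v_2]$-linearity of the splitting from Theorem \ref{mainSS2} (which you spell out explicitly) implicitly guaranteeing $v_iV\subseteq V$.
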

\begin{proof}
	Since $V$ is concentrated in Adams filtration 0, and since the Adams filtration of $v_0, v_1$, and $v_2$ is 1, the corollary follows.
\end{proof}

\subsection{Review of the dual Steenrod algebra}

The purpose of this subsection is to recall essential facts about the dual Steenrod algebra which will be essential for the rest of this paper. In \cite{dualMilnor}, Milnor showed that the dual of the mod 2 Steenrod algebra is given by 
\[
A_* = \F_2[\xi_k\mid k\geq 1],
\]
where $\xi_k$ is in degree $2^k-1$. Since the Steenrod algebra $A$ is a Hopf algebra, the dual is also a Hopf algebra. In particular, it has a conjugate $\chi:A_*\to A_*$, and we will let $\zeta_k$ denote $\chi \xi_k$. Milnor also showed that the coproduct for this Hopf algebra is 
\[
\psi:A_*\to A_*\otimes A_*; \xi_n\mapsto \sum_{i+j=n}\xi_i^{2^j}\otimes \xi_j.
\]
As we will primarily work with the conjugates, we also have the coproducts
\begin{equation}\label{dualSteenrodCoprod}
\psi(\zeta_n) = \sum_{i+j=n}\zeta_j\otimes \zeta_i^{2^j}
\end{equation}
We will also be interested in an important family of elements of $A$ and subalgebras of the Steenrod algebra. The \emph{Milnor primitives} are elements $Q_n\in A$ defined recursively by 
\begin{align*}
	Q_0 &:= \mathrm{Sq}^1 & Q_{n+1}&:= [Q_n, \mathrm{Sq}^{2^n}].
\end{align*}
The degree of $Q_n$ is $2^{n+1}-1$. At the prime 2, the Milnor primitive $Q_n$ is dual to  $\zeta_{n+1}$, and the $Q_n$ are primitive elements of $A$. Milnor also showed that the subalgebra of $A$ generated by the first $n+1$ Milnor primitives $Q_0, \ldots, Q_n$,
\[
E(n):= \langle Q_0, \ldots , Q_n\rangle 
\]
is in fact a primitively generated exterior subalgebra of $A$, i.e. 
\[
E(n) = E(Q_0, \ldots , Q_n).
\]
The dual of this subalgebra is a quotient of $A_*$; as it is a primitively generated exterior algebra, its dual is also a primitively generated exterior algebra,
\[
E(n)_* = E(\zeta_1, \ldots, \zeta_{n+1}).
\]
It is a quotient via 
\begin{equation}\label{SES_E(n)}
\begin{tikzcd}
	0\arrow[r] & A_*\cdot\overline{\AE{n}}\arrow[r]& A_*\arrow[r, "p_n"] & E(n)_*\arrow[r]& 0
\end{tikzcd}
\end{equation}
where $\overline{\AE{n}}$ is the augmentation ideal of the dual of $A\mmod E(n)$. These subalgebras are important because they are used to describe the homology of the truncated Brown-Peterson spectra $\tBP{n}$.

\begin{thm}[cf. \cite{OmegaWilson}]
	There is an isomorphism of $A_*$-comodules
	\[
	H_*(\tBP{n})\cong \AE{n}.
	\]
\end{thm}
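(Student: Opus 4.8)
\emph{Proof proposal.} The plan is to compute the mod $2$ cohomology $H^*\tBP{n}$ as a module over the Steenrod algebra $A$, identify it with $A\mmod E(n)$, and then dualize. I will use the standard presentation of $\tBP{n}$ as the quotient $BP/(v_{n+1},v_{n+2},\ldots)$ formed in $BP$-modules: one kills $v_{n+1},v_{n+2},\ldots$ one at a time via cofiber sequences $\Sigma^{|v_k|}M\xrightarrow{v_k}M\to M/v_k$ and takes the (degreewise eventually constant) colimit, realizing $\pi_*\tBP{n}\cong\Z^{\wedge}_2[v_1,\ldots,v_n]$; since mod $2$ homology is a homotopy invariant this model suffices even though $\tBP{2}$ ``really'' arises as $\tmf_1(3)$. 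The inputs I would take for granted are the classical facts that $H_*BP\cong\F_2[\zeta_1^2,\zeta_2^2,\ldots]\subseteq A_*$, with the inclusion induced by the orientation $BP\to\HF2$, that $H_*\HF2\cong A_*$, and that there is a tower of (homotopy-)ring maps $\tBP{n}\to\tBP{n-1}\to\cdots\to\tBP{0}=\HZ\to\HF2$; write $\varphi\colon\tBP{n}\to\HF2$ for the composite and $\iota\in H^0\tBP{n}$ for the fundamental class.

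The engine is that every $v_k$ with $k\ge 1$ acts trivially on mod $2$ homology at each stage of the construction: its Hurewicz image $h_{BP}(v_k)\in H_*BP$ maps to $0$ under the injection $H_*BP\hookrightarrow H_*\HF2=A_*$, hence vanishes, and by naturality of the Hurewicz homomorphism along the $BP$-module maps $BP\to M$ in the tower it acts as $0$ on $H_*(M;\F_2)$, equivalently on $H^*(M;\F_2)$ (the class $v_0=2$ likewise acts as $0$ on mod $2$ homology). Thus each cofiber sequence induces an additively split short exact sequence of $A$-modules $0\to\Sigma^{|v_k|+1}H^*M\to H^*(M/v_k)\to H^*M\to 0$. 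Iterating for $BP$ and $v_{n+1},v_{n+2},\ldots$ and passing to the colimit gives a non-canonical additive isomorphism $H_*\tBP{n}\cong H_*BP\otimes_{\F_2}E(y_{n+1},y_{n+2},\ldots)$ with $|y_k|=|v_k|+1=2^{k+1}-1$; comparing Poincar\'e series, this matches that of $A\mmod E(n)$, whose dual $\AE{n}=\F_2[\zeta_1^2,\ldots,\zeta_{n+1}^2,\zeta_{n+2},\zeta_{n+3},\ldots]$ is also free over $\F_2[\zeta_i^2]$ on exterior generators $\zeta_j$, $j\ge n+2$, in the same degrees. In particular $H_*\tBP{n}$ vanishes in odd degrees below $2^{n+2}-1$. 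Running the same argument instead for $\tBP{n}$ and the sequence $v_0=2,v_1,\ldots,v_n$ identifies $\HF2$ with $\tBP{n}/(2,v_1,\ldots,v_n)$ and exhibits $A_*=H_*\HF2$ as $H_*\tBP{n}\otimes_{\F_2}E(z_0,\ldots,z_n)$ in such a way that $\varphi_*\colon H_*\tBP{n}\to A_*$ is the inclusion of the first tensor factor; hence $\varphi_*$ is split injective, the dual $\varphi^*\colon A=H^*\HF2\to H^*\tBP{n}$ sends $1\mapsto\iota$ and is surjective, and $H^*\tBP{n}$ is a cyclic $A$-module.

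To finish, write $H^*\tBP{n}=A/J$ with $J=\operatorname{Ann}_A(\iota)$. For $0\le i\le n$ the Milnor primitive $Q_i$ has odd degree $2^{i+1}-1<2^{n+2}-1$, so $Q_i\iota$ lies in $H^{2^{i+1}-1}\tBP{n}=0$ by the vanishing range above; hence $Q_0,\ldots,Q_n\in J$, and since the left ideal $A\cdot\overline{E(n)}$ equals $AQ_0+\cdots+AQ_n$ (the $Q_i$ commute at $p=2$) there is a surjection $A\mmod E(n)=A/A\overline{E(n)}\twoheadrightarrow A/J=H^*\tBP{n}$. By the Poincar\'e-series computation this is a surjection of graded vector spaces of equal finite dimension in every degree, so it is an isomorphism $H^*\tBP{n}\cong A\mmod E(n)$ of $A$-modules; dualizing yields the asserted isomorphism of $A_*$-comodules $H_*\tBP{n}\cong\AE{n}$.

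I expect the main obstacle to be the bookkeeping in the middle paragraph: carrying out the cofiber-sequence argument genuinely within $BP$-modules so that each $v_k$ acts trivially at every stage, and matching the Poincar\'e series of the resulting additive splitting of $H_*\tBP{n}$ with that of $A\mmod E(n)$, since this is exactly what makes the dimension count in the final step close. By contrast, no analysis of the $A_*$-comodule coproduct on the ``new'' classes $y_k$ is needed: passing to cohomology makes the module cyclic, and its $A$-module structure is then forced by the degree-dictated vanishing of $Q_0\iota,\ldots,Q_n\iota$.
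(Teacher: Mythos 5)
The paper itself gives no proof of this theorem; it is recalled as a classical computation with a citation to \cite{OmegaWilson}, so there is no internal argument to compare yours against. What you propose is a correct, self-contained version of the standard proof: the quotient tower gives the additive size of $H_*\tBP{n}$, killing $2,v_1,\dots,v_n$ shows $H^*\tBP{n}$ is cyclic over $A$, the vanishing of $H^*\tBP{n}$ in odd degrees below $2^{n+2}-1$ forces $Q_0\iota=\cdots=Q_n\iota=0$, and the resulting surjection $A\mmod E(n)\twoheadrightarrow H^*\tBP{n}$ is an isomorphism by the Poincar\'e series count, which dualizes to the stated comodule isomorphism. Two points deserve sharper justification, though neither is a genuine gap. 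First, the claim that each $v_k$ acts as zero on $H_*(M;\F_2)$ for \emph{every} $BP$-module $M$ in the tower does not quite follow from ``naturality of the Hurewicz map along $BP\to M$'' (that only controls the Hurewicz image of $v_k\cdot 1\in\pi_*M$, not the action on all of $H_*M$); the clean argument factors the action as $S^{|v_k|}\wedge M\xrightarrow{v_k\wedge 1}BP\wedge M\to M$ and uses the K\"unneth isomorphism $H_*(BP\wedge M;\F_2)\cong H_*(BP;\F_2)\otimes H_*(M;\F_2)$, under which the first map sends $x$ to $h(v_k)\otimes x=0$. Second, replacing the paper's $\tBP{n}$ (for $n=2$ a form such as $\tmf_1(3)$, cf. \cite{Lawson-Naumann}) by the $BP$-module quotient $BP/(v_{n+1},v_{n+2},\dots)$ uses that these have the same homotopy type; this is true and is part of what the citations carry, but it should be said explicitly rather than hidden in ``homology is a homotopy invariant.''
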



\subsection{The (co)module structure of $\AE2$} 

We will now describe the structure of $\AE2$ as a module over $E(2)$, which is necessary in order to compute Ext. To do this, we will use first describe the $E(2)$-comodule structure. It follows from \eqref{dualSteenrodCoprod} that the coproduct on $A_*$, when restricted to $\AE{2}$, satisfies
\[
\psi: \AE2\to A_*\otimes \AE2,
\]
making $\AE2$ into an $A_*$-comodule algebra. To obtain the $E(2)_*$-coaction for $\AE{2}$, we compose with the map $p_2\otimes 1$, where $p_2$ is the projection morphism from \eqref{SES_E(n)}. More explicitly, the coaction is the composite
\[
\begin{tikzcd}
	\alpha: \AE{2}\arrow[r, "\psi"] & A_*\otimes \AE{2}\arrow[r, "p_2\otimes 1"] & E(2)_*\otimes \AE{2}
\end{tikzcd}
\]
Applying the formula for the coproduct on $A_*$, we find that 
\[
\alpha:\zeta_n\mapsto 1\otimes \zeta_n+\zeta_1\otimes \zeta_{n-1}^2+\zeta_2\otimes \zeta_{n-2}^4+\zeta_3\otimes \zeta_{n-3}^8, \,\,\, n > 3
\]
and 
\[
\alpha:\zeta_n^2\mapsto 1\otimes \zeta_n^2, \,\,\, n=0,1,2,3.
\]

As in \cite[pg 332]{bluebook}, given a locally finite $E(2)_*$-comodule $M$, we may define a $E(2)$-module structure on $M$ via the following formula: if $\alpha(x)$ is given by $\sum_ix_i'\otimes x_i''$ then define
\[
Q_nx:= \sum_{i}\langle Q_n,x_i'\rangle x_i'' .
\]
Thus 
\[
Q_i\zeta_n = \zeta_{n-i-1}^{2^{i+1}}\text{ for } n\geq 4 \text{ and } i=0,1,2
\]
and 
\[
Q_i\zeta_1^2=Q_i\zeta_2^2=Q_i\zeta_3^2 = 0 \,\,\,\, i=0,1,2.
\]
Since the $Q_i$ are primitive elements in $A_*$, the action by $Q_i$ is a derivation on $\AE2$, and so we have completely determined the structure of $\AE2$ as a module over $E(2)$. Though the module structure on $\AE2$ is rather simple, $\AE2$ is very large, making calculating the Ext groups difficult. The following subsections will develop means of breaking up the comodule $\AE2$ into simpler pieces. This will rely heavily on the \emph{Margolis homology} of $\AE2$. We will now briefly review Margolis homology.

\subsection{Margolis homology and structure theory of $E(n)$-modules}

The purpose of this subsection is to recall the definition of Margolis homology and explain its role in the structure theory of modules over the Hopf algebras $E(n)$. We also recall some facts from \cite{uniquenessBSO} which will be of use in \textsection\ref{subsecn:BSS for Q}.\\

Let $P$ be a module over an exterior algebra $E(x)$ on one generator. Then the multiplication by $x$ on $P$ can be regarded as a differential, as it squares to 0, making $P$ into a chain complex. The homology of this chain complex is called the \emph{Margolis homology} of $P$, and is denoted by $M_*(P;x)$. In other words, 
\[
M_*(P;x):= \ker(x\cdot :P\to P)/\im( x\cdot :P\to P ).
\]
Modules $P$ over the algebra $E(2)$ will have three different Margolis homology groups, namely the ones arising from restricting $P$ to a module over the exterior algebras $E(Q_0), E(Q_1), E(Q_2)$. The following theorem demonstrates the importance of the Margolis homology groups.
\begin{thm}[Margolis, cf \cite{bluebook}, {\cite[Theorem 18.3.8a]{spectraMargolis}}] \label{Margolis}
	Let $P$ be a module over $E$ where $E$ is an exterior algebra on a (possibly countably infinite) set of generators $x_1, x_2, \ldots$ so that their degrees satisfy $0<|x_1|<|x_2|<\cdots$. If $P$ is bounded below, then $P$ is free if and only if all of its Margolis homology groups vanish. 
\end{thm}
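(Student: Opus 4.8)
The plan is to prove both implications, the forward one being formal and the converse resting on an induction whose inductive step carries all the content. For the forward direction, note that $E$ is free as a module over each $E(x_i)$, since $E\cong E(x_i)\otimes E(x_1,\dots,\widehat{x_i},\dots)$; hence $M_*(E;x_i)=0$, and because Margolis homology is the homology of a complex it commutes with suspensions and arbitrary direct sums, so any free module has all Margolis homology groups zero (no boundedness needed). For the converse I would first reduce to finitely many generators: as $P$ is bounded below and $|x_k|\to\infty$, only finitely many $x_k$ keep the range of a given band of degrees inside itself, and restriction to $E_N:=E(x_1,\dots,x_N)$ preserves the hypotheses $M_*(P;x_i)=0$ for $i\le N$. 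Granting the finite case, $P$ is free over every $E_N$, and a standard degreewise-finiteness/colimit argument—using the lower bound on $P$—assembles a single $E$-basis. I will flag this limiting step and concentrate on the finite case.

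For $E=E(x_1,\dots,x_n)$ I would induct on $n$. When $n=1$, choosing homogeneous lifts $\{e_\alpha\}$ of an $\F_2$-basis of $P/x_1P$ and using $x_1^2=0$ shows directly that the $e_\alpha$ form a free $E(x_1)$-basis: generation follows from $P=\langle e_\alpha\rangle + x_1P$ by applying $x_1$, and linear independence of $\{e_\alpha,x_1e_\alpha\}$ uses precisely $\ker x_1=\im x_1$, i.e. $M_*(P;x_1)=0$. For the inductive step I would peel off the \emph{top} generator $y:=x_n$ and write $E=E'\otimes E(y)$ with $E'=E(x_1,\dots,x_{n-1})$. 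Since $M_*(P;y)=0$, the base case makes $P$ free over $E(y)$; fixing a splitting $P=V\oplus yV$, the submodule $\widetilde P:=yP=\ker(y)\cong V$ is a bounded-below $E'$-module. A lifting lemma—proved by the same generation-and-relations bookkeeping as the base case—shows that lifts of a free $E'$-basis of $\widetilde P$ constitute a free $E$-basis of $P$; thus it suffices to make $\widetilde P$ free over $E'$.

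The crux is to verify the inductive hypothesis for $\widetilde P$, namely $M_*(\widetilde P;x_i)=0$ for $i<n$. Writing the $x_i$-action on $P=E(y)\otimes V$ as $x_i=\mathrm{id}\otimes\phi_i+(y\cdot)\otimes\psi_i$, so that $x_i$ acts as $\phi_i$ on $\widetilde P\cong V$, one checks that $x_i^2=0$ forces $\phi_i^2=0$ and $\phi_i\psi_i=\psi_i\phi_i$. Then $yV$ is an $x_i$-subcomplex of $(P,x_i)$ with quotient again $(V,\phi_i)$, giving a short exact sequence of $x_i$-complexes $0\to(V,\phi_i)\to(P,x_i)\to(V,\phi_i)\to 0$ whose connecting homomorphism is the map $\psi_i^{*}$ induced by $\psi_i$ on $M_*(V;\phi_i)$. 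Feeding $M_*(P;x_i)=0$ into the associated long exact sequence forces $\psi_i^{*}\colon M_*(V;\phi_i)\to M_*(V;\phi_i)$ to be an isomorphism. This is where the hypotheses bite: because $y=x_n$ is the top generator we have $|y|>|x_i|$, so $\psi_i$—hence $\psi_i^{*}$—strictly lowers degree, and a degree-lowering endomorphism cannot be an isomorphism of a nonzero bounded-below graded vector space, as it fails injectivity in the lowest nonzero degree. Since $M_*(\widetilde P;x_i)=M_*(V;\phi_i)$ is bounded below, it must vanish. I expect this degree argument—the one point where ``strictly increasing degrees'' and ``bounded below'' are both indispensable—to be the main obstacle to set up correctly.

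With $M_*(\widetilde P;x_i)=0$ for all $i<n$ and $\widetilde P$ bounded below, the inductive hypothesis makes $\widetilde P$ free over $E'$, and the lifting lemma then makes $P$ free over $E$, closing the induction. Finally, returning to the countable case, freeness over every $E_N$ together with the degreewise finiteness coming from $|x_k|\to\infty$ and the lower bound on $P$ lets one organize the finite bases into a global $E$-basis, completing the proof.
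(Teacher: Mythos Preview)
The paper does not prove this theorem; it is stated with attribution to Margolis and citations to Adams's blue book and Margolis's \emph{Spectra and the Steenrod Algebra}, and is used as a black box. So there is no ``paper's own proof'' to compare against.

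That said, your argument is essentially the classical one and is correct in its substance. The forward direction is fine. For the converse, the induction on the number of generators with the top generator $y=x_n$ peeled off is exactly the right move: the short exact sequence $0\to (V,\phi_i)\to (P,x_i)\to (V,\phi_i)\to 0$ and the identification of the connecting map with $\psi_i^{*}$ are set up correctly, and the key observation that $\psi_i$ has degree $|x_i|-|x_n|<0$ so cannot be an automorphism of a nonzero bounded-below graded vector space is precisely why one must strip off the \emph{largest} generator rather than the smallest. Your lifting lemma (that a free $E'$-basis of $yP$ lifts to a free $E$-basis of $P$) is standard and follows from graded Nakayama plus the freeness of $P$ over $E(y)$; it would be worth writing out the injectivity half explicitly, but there is no obstruction.

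The one genuinely soft spot is the passage from finitely many to countably many generators, which you rightly flag. ``Free over every $E_N$'' does not formally imply ``free over $E$'' without an additional argument. One clean way to close this: choose lifts of a basis of $P/IP$ to get a surjection $F\to P$ with $F$ free (graded Nakayama, using bounded-below and connectedness of $E$); the kernel $K$ is bounded below, sits in $IF$, and inherits vanishing Margolis homology, hence by the finite case is free over every $E_N$; a minimal-degree argument then forces $K=0$, since a nonzero element of least degree in $K\subset IF$ would have to lie in $I_N F$ for $N$ large, contradicting that $K$ is an $E_N$-summand of $F$ in that degree range. Alternatively one can argue via $\mathrm{Tor}^E_*(\F_2,P)\cong\colim_N\mathrm{Tor}^{E_N}_*(\F_2,P)=0$ in positive degrees. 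Either route is routine, but you should spell one out rather than leave it as a ``standard colimit argument.''
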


Recall that two $E$-modules $P$ and $Q$ are \emph{stably equivalent} if there are free modules $F$ and $F'$ such that there is an isomorphism
\[
P\oplus F\simeq Q\oplus F'.
\]
The importance of this for us is that if $P$ and $Q$ are two $E$-modules which are stably equivalent, then 
\[
\Ext_{E}^{s}(P)\cong\Ext_{E}^s(Q)
\]
for all $s>0$.

\begin{cor}
	If $f:P\to Q$ is a map of bounded below $E$-modules, then $f$ is a stable equivalence if and only if $f$ induces an isomorphism in all Margolis homology groups.
\end{cor}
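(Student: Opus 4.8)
The plan is to reduce the statement to Theorem~\ref{Margolis} by forming the cokernel of a monomorphism that is stably equivalent to $f$. Write $x_1,x_2,\dots$ for the exterior generators of $E$. The basic observation is that for each $i$ the tensor factorization $E\cong E(x_i)\otimes E'$, with $E'$ the exterior algebra on the remaining generators, exhibits every free $E$-module as a free $E(x_i)$-module, and hence as a Margolis-acyclic one; consequently any map of $E$-modules that factors through a free module induces the zero map on every Margolis homology group $M_*(-;x_i)$.

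One direction is then immediate. If $f$ is a stable equivalence, choose $g\colon Q\to P$ with $gf-\mathrm{id}_P$ and $fg-\mathrm{id}_Q$ factoring through free modules; applying $M_*(-;x_i)$ and using the observation above gives $g_*f_*=\mathrm{id}$ and $f_*g_*=\mathrm{id}$ on each $M_*(-;x_i)$, so $f_*$ is an isomorphism.

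For the converse, suppose $f_*$ is an isomorphism on all Margolis homology groups. First I would replace $f$ by a monomorphism: embed $P$ into a bounded-below injective (equivalently free, since these algebras are self-injective) $E$-module $J$, and set $\bar f=(f,\iota)\colon P\hookrightarrow Q\oplus J$. Since $\bar f$ composed with the split inclusion $Q\hookrightarrow Q\oplus J$ differs from $f$ only by a map through the free module $J$, $\bar f$ is a stable equivalence if and only if $f$ is. Let $C=\coker(\bar f)$, so that $0\to P\xrightarrow{\bar f}Q\oplus J\to C\to 0$ is a short exact sequence of bounded-below $E$-modules; because $J$ is Margolis-acyclic, $M_*(Q\oplus J;x_i)\cong M_*(Q;x_i)$, and under this identification $\bar f$ induces $f_*$, an isomorphism for every $i$. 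The long exact sequences in Margolis homology attached to this short exact sequence (one for each $x_i$) then force $M_*(C;x_i)=0$ for all $i$, so $C$ is free by Theorem~\ref{Margolis}. Hence the sequence splits, and a retraction $r=(r_Q,r_J)\colon Q\oplus J\to P$ of $\bar f$ satisfies $r_Qf=\mathrm{id}_P-r_J\iota$, while the complementary section of the splitting shows $fr_Q$ differs from $\mathrm{id}_Q$ by a map through $C$. Since $J$ and $C$ are free, $r_Q$ is a two-sided stable inverse of $f$, so $f$ is a stable equivalence.

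The one genuinely technical point is the first step of the converse: one must exhibit a bounded-below free (or injective) $E$-module containing $P$, so that $C$ is bounded below and Theorem~\ref{Margolis} applies. This is routine for the exterior algebras $E(n)$ of interest here, where one can take an explicit cofree module, and the only other thing to watch is the internal-degree shift by $|x_i|$ in the Margolis long exact sequences; everything else is a formal diagram chase modulo maps through free modules.
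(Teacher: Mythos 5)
The paper states this corollary without proof, treating it as standard (it goes back to Adams--Priddy and Margolis), so there is no in-text argument to compare yours against; your proposal is the classical proof and it is correct. The forward direction is exactly right: a free $E$-module restricts to a free, hence Margolis-acyclic, $E(x_i)$-module via $E\cong E(x_i)\otimes E'$, so any map factoring through a free module dies in every $M_*(-;x_i)$, and a two-sided stable inverse of $f$ then gives a two-sided inverse of $f_*$. The converse is a clean reduction to Theorem~\ref{Margolis}: replacing $f$ by the monomorphism $(f,\iota)\colon P\to Q\oplus J$ changes nothing stably, the cokernel $C$ is bounded below and Margolis-acyclic by the long exact sequences (the degree shift by $\lvert x_i\rvert$ is harmless), hence free, the sequence splits by projectivity of $C$, and your bookkeeping with the retraction $r=(r_Q,r_J)$ correctly shows $r_Qf-\mathrm{id}_P$ and $fr_Q-\mathrm{id}_Q$ factor through the free modules $J$ and $C$ respectively.

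The only caveat is the one you flag yourself: the embedding of $P$ into a bounded-below free module genuinely uses finiteness of $E$ (e.g.\ $P\hookrightarrow \mathrm{Hom}_{\F_2}(E,P)$, which is free and bounded below because $E$ is a finite Frobenius algebra). Over an exterior algebra on infinitely many generators, which Theorem~\ref{Margolis} allows, no such embedding exists in general --- the socle of a free module vanishes there, so even $\F_2$ admits no embedding into a free module --- and the dual trick (surjecting a bounded-below free module onto $Q$ and splitting off the free kernel) needs free modules to be injective, which is again a finiteness statement. So your argument proves the corollary for the finite algebras $E(n)$, in particular $E(1)$ and $E(2)$, which is precisely the generality in which the paper ever uses it.
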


In \cite[\textsection 3]{uniquenessBSO}, Adams and Priddy define two maps $f,g:P\to Q$ to be \emph{homotopic} if they factor through a free module. By identifying homotopic maps, they obtain the stable category of $E$-modules, which we will write as $\Stable(E)$. They show in \cite[Lemma 3.4]{uniquenessBSO} that modules are isomorphic in the stable category if and only if they are stably equivalent. 

The category $\Stable(E)$ is a symmetric monoidal category under the tensor product, and so has a \emph{Picard group}, 
\[
\Pic(E):= \Pic(\Stable(E)),
\]
its the group formed of stable equivalence classes of $E$-modules $P$ and $Q$ such that $P\otimes Q$ is stably equivalent to $\F_2$. When $E=E(1)$, this Picard group was computed by Adams and Priddy in \cite{uniquenessBSO}. 

\begin{lem}[{\cite[Lemma 3.5]{uniquenessBSO}}]\label{lem:when invertible}
	Let $P$ be a module over $E(1)$. Then $P$ is invertible if and only if both of its Margolis homology groups $M_*(P;Q_0)$ and $M_*(P;Q_1)$ are one-dimensional.
\end{lem}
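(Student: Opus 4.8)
The plan is to exploit the fact that Margolis homology is monoidal, together with the stable-equivalence criterion of Theorem~\ref{Margolis}, and to produce the inverse of an invertible $P$ as its $\F_2$-linear dual.

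\textbf{Setup.} Since $Q_0$ and $Q_1$ are primitive in $E(1)_*$, for any $E(1)$-modules $P,Q$ the multiplication-by-$Q_i$ differential on $P\otimes Q$ is the differential of the tensor product of the two chain complexes $(P,Q_i)$ and $(Q,Q_i)$; the Künneth theorem over $\F_2$ then gives a natural isomorphism
\[
M_*(P\otimes Q;Q_i)\cong M_*(P;Q_i)\otimes M_*(Q;Q_i),\qquad i=0,1.
\]
I would also record that a free $E(1)$-module has vanishing Margolis homology (Theorem~\ref{Margolis}), so stably equivalent modules have isomorphic Margolis homology, and that $M_*(\F_2;Q_i)=\F_2$ is one-dimensional.

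\textbf{Necessity.} If $P$ is invertible with inverse $Q$, then $P\otimes Q$ is stably equivalent to $\F_2$, so applying $M_*(-;Q_i)$ and the displayed isomorphism gives $M_*(P;Q_i)\otimes M_*(Q;Q_i)\cong\F_2$. A tensor product of graded $\F_2$-vector spaces is one-dimensional only if each factor is one-dimensional, so $M_*(P;Q_0)$ and $M_*(P;Q_1)$ are each one-dimensional.

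\textbf{Sufficiency.} Assume $\dim_{\F_2}M_*(P;Q_0)=\dim_{\F_2}M_*(P;Q_1)=1$. The candidate inverse is the dual module $DP:=\operatorname{Hom}_{\F_2}(P,\F_2)$, with $E(1)$-action via the antipode (which fixes $Q_0,Q_1$ mod $2$), together with the evaluation map $\varepsilon\colon P\otimes DP\to\F_2$. Dualizing the complex $(P,Q_i)$ identifies $M_*(DP;Q_i)$ with $M_*(P;Q_i)^{\vee}$, so by Künneth $M_*(P\otimes DP;Q_i)\cong M_*(P;Q_i)\otimes M_*(P;Q_i)^{\vee}$, which is one-dimensional; and on Margolis homology $\varepsilon$ induces the canonical pairing $M_*(P;Q_i)\otimes M_*(P;Q_i)^{\vee}\to\F_2$, which is an isomorphism since $M_*(P;Q_i)$ is one-dimensional. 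Thus $\varepsilon$ is an isomorphism on both Margolis homologies, hence a stable equivalence by the corollary to Theorem~\ref{Margolis}, so $P\otimes DP\simeq\F_2$ in $\Stable(E(1))$ and $P$ is invertible.

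\textbf{Main obstacle.} The delicate point is the finiteness bookkeeping required to run the sufficiency argument inside $\Stable(E(1))$: the identification $M_*(DP;Q_i)\cong M_*(P;Q_i)^{\vee}$ and the formation of $\varepsilon$ use that $P$ is locally finite, while applying Theorem~\ref{Margolis} and its corollary to $P\otimes DP$ needs a bounded(-below) representative. So one must first check that a bounded-below, locally finite $E(1)$-module with both Margolis homology groups one-dimensional is stably equivalent to a finite module — by splitting off a maximal free summand and observing that what remains is detected by, and is finite because of, its finite-dimensional Margolis homology — and then replace $P$ by such a finite representative before forming $DP$ and $\varepsilon$.
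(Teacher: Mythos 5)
The paper itself gives no proof of this lemma---it is quoted directly from Adams--Priddy---so your argument has to stand on its own. The necessity half (K\"unneth for Margolis homology plus stable invariance) is correct and is the standard argument, and your sufficiency argument via the evaluation map $P\otimes DP\to\F_2$ together with the corollary to Theorem \ref{Margolis} is sound whenever $P$ is finite-dimensional. The genuine gap is exactly at the point you flag: the reduction to a finite representative. Your justification---split off a maximal free summand and conclude that the reduced remainder ``is finite because of its finite-dimensional Margolis homology''---is not a valid inference. Consider the bounded-below, locally finite module $J$ with basis $\{a_i,b_i\mid i\geq 0\}$, $|a_i|=2i$, and $Q_1a_i=b_i=Q_0a_{i+1}$, $Q_0a_0=0$, $Q_ib_j=0$. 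Then $Q_0Q_1J=0$, so $J$ contains no free submodule and admits no free summand, and its total Margolis homology is one-dimensional ($M_*(J;Q_0)=\F_2\{a_0\}$ while $M_*(J;Q_1)=0$); yet $J$ is infinite-dimensional. So finiteness of a reduced module cannot be read off from finite-dimensionality of its Margolis homology alone. Any correct argument must exploit that \emph{both} groups are nonzero and one-dimensional, and the resulting statement---a reduced, bounded-below module of finite type with both Margolis homology groups one-dimensional is finite (a finite ``zigzag'', stably $\Sigma^aI^{\otimes b}$)---is essentially the substantive content of Adams--Priddy's Lemma 3.5 and Theorem 3.6, proved there by an inductive comparison with the modules $\Sigma^aI^{\otimes b}$, not by a soft observation. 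In effect your resolution of the obstacle assumes the hard part of the lemma.

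The reduction really is needed for your route: if $P$ is infinite-dimensional then $DP$ is bounded above but not below, $P\otimes DP$ is not bounded below, and Theorem \ref{Margolis} and its corollary are then unavailable (vanishing Margolis homology does not detect freeness without the bounded-below hypothesis), so the evaluation map cannot be promoted to a stable equivalence. Two smaller points: the existence of a maximal free summand splitting for an infinite-dimensional $P$ itself needs justification (free $=$ injective over $E(1)$, arbitrary direct sums of injectives are injective since $E(1)$ is finite-dimensional, then a Zorn argument), and the lemma must be read with Adams--Priddy's standing hypotheses (bounded below, finite type), which your obstacle paragraph correctly anticipates but which should be made explicit in the statement. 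As it stands, your proposal covers the easy direction and the finite case of the hard direction; the passage from the general case to the finite case is the real content and is missing.
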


There are two important examples of invertible $E(1)$-modules. Let $\Sigma$ be the $E(1)$-module which is an $\F_2$ in degree $1$. Let $I$ be the augmentation ideal of $E(1)$, so that $I$ fits into a short exact sequence
\[
\begin{tikzcd}
	0\arrow[r] & I\arrow[r] & E(1)\arrow[r,"\varepsilon"] & \F_2\arrow[r] & 0.
\end{tikzcd}
\]

\begin{thm}[{\cite[Theorem 3.6]{uniquenessBSO}}] \label{thm: picard group}
	The Picard group of $E(1)$ is a free abelian group of rank 2 generated by $\Sigma$ and $I$. In other words, any invertible $E(1)$-module is stably equivalent to $\Sigma^aI^{\otimes b}$ for unique integers $a$ and $b$.
\end{thm}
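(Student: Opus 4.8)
The plan is to follow Adams and Priddy by detecting invertible $E(1)$-modules through the location of their Margolis homology; modules are taken bounded below throughout. First I would verify that $\Sigma$ and $I$ really are invertible. For $\Sigma$ this is immediate: $\Sigma \otimes \Sigma^{-1} = \F_2$, where $\Sigma^{-1}$ is $\F_2$ in degree $-1$. For $I$, one reads off from the $\F_2$-basis $\{Q_0, Q_1, Q_0Q_1\}$ (in degrees $1,3,4$) that $Q_0$ sends $Q_1 \mapsto Q_0Q_1$ and kills $Q_0, Q_0Q_1$, while $Q_1$ sends $Q_0 \mapsto Q_0Q_1$ and kills $Q_1, Q_0Q_1$; hence $M_*(I;Q_0) = \F_2$ concentrated in degree $1$ and $M_*(I;Q_1) = \F_2$ concentrated in degree $3$, so $I$ is invertible by Lemma \ref{lem:when invertible}.

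Next, for any invertible $E(1)$-module $P$, Lemma \ref{lem:when invertible} says $M_*(P;Q_0)$ and $M_*(P;Q_1)$ are one-dimensional, hence each concentrated in a single internal degree; write $\mu(P) = (m_0(P), m_1(P)) \in \Z^2$ for those degrees. Because $Q_0$ and $Q_1$ are primitive, they act on a tensor product by the Leibniz rule, so the K\"unneth theorem for Margolis homology gives $M_*(P \otimes P';Q_i) \cong M_*(P;Q_i)\otimes M_*(P';Q_i)$, whence $\mu(P\otimes P') = \mu(P) + \mu(P')$. Since free modules have vanishing Margolis homology, $\mu$ descends to a group homomorphism $\mu \colon \Pic(E(1)) \to \Z^2$. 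From the previous paragraph $\mu(\Sigma) = (1,1)$ and $\mu(I) = (1,3)$, which are $\Z$-linearly independent, so $\Sigma$ and $I$ generate a free abelian subgroup of rank $2$ on which $\mu$ is injective.

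The crux is the parity statement: $m_0(P) \equiv m_1(P) \pmod 2$ for every invertible $P$, i.e.\ $\im(\mu) \subseteq L := \{(m,n) : m \equiv n \bmod 2\}$. To prove it I would first reduce to the case that $P$ is finite-dimensional, using that an invertible module has only finitely much total Margolis homology and hence, via Theorem \ref{Margolis} together with the structure theory of bounded-below $E(1)$-modules, is stably equivalent to a finite-dimensional module (and $\mu$ is a stable invariant). Then, since $Q_0$ and $Q_1$ both raise degree by an odd amount, passing to Margolis homology preserves the $\Z/2$-graded Euler characteristic $\chi(P) := \sum_k (-1)^k \dim_{\F_2} P_k$, so $(-1)^{m_0(P)} = \chi(M_*(P;Q_0)) = \chi(P) = \chi(M_*(P;Q_1)) = (-1)^{m_1(P)}$, giving the congruence. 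As $\mu(\Sigma) = (1,1)$ and $\mu(I) - \mu(\Sigma) = (0,2)$ already generate $L$, this shows $\im(\mu) = L$ and that $\langle \Sigma, I\rangle$ maps isomorphically onto $L$.

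Finally I would prove $\mu$ is injective. If $\mu(P) = (0,0)$, then $M_*(P;Q_0)$ and $M_*(P;Q_1)$ are each $\F_2$ concentrated in degree $0$; choosing $x \in P_0$ representing the generator of $M_*(P;Q_0)$ and checking, using boundedness and the concentration of both Margolis homologies in degree $0$, that $x$ may be taken to be simultaneously a $Q_0$- and a $Q_1$-cycle whose classes generate, the map $\F_2 \to P$ sending the generator to $x$ is an isomorphism on all Margolis homology and hence a stable equivalence by the corollary to Theorem \ref{Margolis}; thus $P$ is trivial in $\Pic(E(1))$. Combined with the previous step, $\mu$ is an isomorphism onto $L$ and restricts to an isomorphism $\langle \Sigma, I\rangle \xrightarrow{\sim} L$, so $\Pic(E(1)) = \langle \Sigma, I\rangle$ is free abelian of rank $2$ on $\Sigma$ and $I$. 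I expect the genuinely delicate points to be the parity lemma — in particular making the reduction to finite-dimensional modules honest — and pinning down the generator $x$ in the injectivity step; the rest is formal.
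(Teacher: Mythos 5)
The paper does not actually prove this statement---it is quoted from Adams--Priddy \cite{uniquenessBSO}---so the only thing to assess is whether your sketch would stand on its own, and as written it would not. The sound parts are the ones you could check by hand: the Margolis homology of $I$ sits in degrees $1$ and $3$, so $I$ is invertible by Lemma \ref{lem:when invertible}; the K\"unneth isomorphism for Margolis homology makes $\mu(P)=(m_0(P),m_1(P))$ additive and stably invariant, so $\mu$ is a homomorphism $\Pic(E(1))\to\Z^2$ with $\mu(\Sigma)=(1,1)$, $\mu(I)=(1,3)$; and granting injectivity of $\mu$ and that its image is the even-difference lattice $L$, the theorem follows. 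But the two steps you yourself flag as delicate are exactly where the content of the theorem lives, and both are gaps rather than proofs.

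First, the parity step. The Euler-characteristic argument is fine for finite-dimensional modules, but the reduction ``invertible $\Rightarrow$ stably equivalent to a finite-dimensional module'' is unsupported: it does not follow from Theorem \ref{Margolis} or Lemma \ref{lem:when invertible}, and the ``structure theory of bounded-below $E(1)$-modules'' you invoke is not in the paper and is essentially of the same strength as the theorem being proved. Note that naive truncation $P/P_{>N}$ creates new Margolis homology classes in degrees near $N$ whose contributions to the two Euler characteristics need not match, and there is no Euler characteristic for bounded-below infinite modules; the Poincar\'e-series version of the argument also fails because $1+t$ is a unit in $\Z[[t]]$. So ruling out an invertible module with, say, $\mu(P)=(0,1)$ is genuine work that your outline does not do. Second, the injectivity step. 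The claim that a degree-$0$ representative $x$ of $M_*(P;Q_0)$ ``may be taken'' to be a simultaneous $Q_0$- and $Q_1$-cycle generating both Margolis homologies is the crux, not a check. The natural correction procedure (write $Q_1x=Q_0w_1$, then $Q_1w_1=Q_0w_2$, etc., and fold back) is an \emph{upward} zig-zag in degree, and boundedness below gives no termination; compare the paper's proof that $S$ has trivial Margolis homology, where the analogous zig-zag terminates only because it descends in degree or because one restricts to finite-dimensional weight pieces. Moreover, even after producing a simultaneous cycle whose class generates $M_*(P;Q_0)$, nothing formal prevents that class from being a $Q_1$-boundary, in which case $\F_2\to P$ is not an isomorphism on $M_*(-;Q_1)$ and the stable-equivalence criterion does not apply. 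Both points need real input of the kind Adams and Priddy supply; as it stands your argument establishes only that $\langle\Sigma,I\rangle$ injects into $\Pic(E(1))$ as a rank-$2$ free summand detected by $\mu$.
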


\begin{rmk}
One can extract the integers $a$ and $b$ in the theorem from the dimensions of the nonzero Margolis homology groups. Observe that $M_*(\Sigma^aI^b;Q_0)$ is an $\F_2$ in degree $a+b$ and that $M_*(\Sigma^aI^b;Q_1)$ is nonzero only in degree $a+3b$. So if $P$ is an invertible $E(1)$-module with $Q_0$- and $Q_1$-Margolis homology nonzero only in degrees $c$ and $d$ respectively, then 
\begin{align*}
	a+b&=c & a+3b&=d,
\end{align*}
from which one finds
\begin{align*}
	b &= \frac{d-c}{2} & a &=\frac{3c-d}{2}.
\end{align*}
\end{rmk}

\begin{rmk}\label{rmk: Adams covers}
	It is relatively easy to determine the Ext groups of $I^{\otimes b}$ for all $b>0$ modulo torsion in $\Ext^0$. To begin, let $b=1$. Then there is a short exact sequence
	\[
	0\to I\to E(1)\to \F_2\to 0.
	\]
	This induces a long exact sequence in $\Ext_{E(1)}$, and since $\Ext^s_{E(1)}(E(1))=0$ for $s>0$, the connecting homomorphism is an isomorphisms
	\[
	\Ext_{E(1)}^{s+1}(\F_2,\F_2)\cong \Ext_{E(1)}^{s}(I, \F_2)
	\]
	for $s>0$. Tensoring the above short exact sequence with $I^{\otimes b}$ provides an isomorphism
	\[
	\Ext_{E(1)}^{s+1}(I^{\otimes b}, \F_2)\cong \Ext_{E(1)}^{s}(I^{\otimes b+1}, \F_2)
	\]
	for $s>0$. This gives, by induction, isomorphisms
	\[
	\Ext_{E(1)}^s(I^{\otimes b},\F_2)\cong \Ext_{E(1)}^{s+b}(\F_2,\F_2)
	\]
	for $s>0$. Since we are primarily interested in the dual, we record also that
	\[
	\Ext_{E(1)_*}^s(\F_2,I^{\otimes b})\cong \Ext_{E(1)_*}^{s+b}(\F_2,\F_2 ).
	\]
\end{rmk}

In practical terms, the Adams chart for $\Ext_{E(1)_*}(\F_2, I^{\otimes b})/tors$ is obtained from the usual Adams chart for $\Ext_{E(1)_*}(\F_2, \F_2)$ by collecting only the elements in the $s=b$ line and above, and translating the $s=b$ line to $s=0$. We call this Adams chart the $b$th \emph{Adams cover} of $\Ext_{E(1)_*}(\F_2,\F_2)$. We denote the $b$th Adams cover by $\Ext_{E(1)_*}(\F_2)^{\langle b\rangle}$.

 For later subsections, we will now record the Margolis homology of $\tBP{2}$. 

\begin{prop}\label{MargolisBP2}
The Margolis homology of $\tBP{2}$ is given by 
\begin{equation*}
\begin{split}
M_*(\tBP{2};Q_0)&= \F_2[\zeta_1^2,\zeta_2^2]\\
M_*(\tBP{2};Q_1)&= \F_2[\zeta_1^2]\otimes E(\zeta_i^2\mid i\geq 2)\\
M_*(\tBP{2};Q_2)&= \frac{\F_2[\zeta_i^2\mid i\geq 1]}{(\zeta_i^8\mid i\geq 1)}
\end{split}
\end{equation*}
\end{prop}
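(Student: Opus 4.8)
The plan is to compute each Margolis homology group directly from the explicit module structure on $\AE{2}$ recorded above. Recall that $(A\sslash E(2))_* = \F_2[\zeta_1^2, \zeta_2^2, \zeta_3^2, \zeta_4, \zeta_5, \ldots]$, that each $Q_i$ acts as a derivation, that $Q_i$ kills $\zeta_1^2,\zeta_2^2,\zeta_3^2$ for $i=0,1,2$, and that $Q_i\zeta_n = \zeta_{n-i-1}^{2^{i+1}}$ for $n\geq 4$. The key structural observation is that, as a $Q_i$-chain complex, $\AE{2}$ splits as a tensor product: the polynomial subalgebra $\F_2[\zeta_1^2,\zeta_2^2,\zeta_3^2]$ is a trivial $Q_i$-complex (it carries no differential), and the remaining variables $\zeta_4,\zeta_5,\ldots$ are each hit nontrivially by the derivation. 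I would organize the computation around this tensor splitting together with the K\"unneth theorem for Margolis homology (the ground field is $\F_2$, so there are no Tor terms).

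First I would handle $Q_0$. Since $Q_0\zeta_n = \zeta_{n-1}^2$ for $n\geq 4$, the variables $\zeta_4,\zeta_5,\zeta_6,\ldots$ pair off with $\zeta_3^2,\zeta_4^2,\zeta_5^2,\ldots$ via the differential; concretely, group the generators so that $Q_0$ sends $\zeta_{n+1}\mapsto \zeta_n^2$ for $n\geq 3$. On the polynomial algebra $\F_2[\zeta_3^2, \zeta_4, \zeta_4^2, \zeta_5, \ldots]$ the derivation $Q_0$ is, up to relabeling, the standard Koszul-type differential on $\F_2[x_3, y_4, x_4, y_5, x_5, \ldots]$ with $Q_0 y_{n+1} = x_n$; a standard computation (or an acyclicity argument: this polynomial algebra on the $y$'s and $x$'s with $y\mapsto x$ is $Q_0$-acyclic away from the constants, since $\F_2[x]\otimes E(y)$ with $Q_0y=x$ has trivial homology, but here the $x$'s are themselves polynomial generators so one must be a little careful and instead note $\F_2[x,y]$ with $y\mapsto x$ also has homology $\F_2$ concentrated in degree $0$) shows the homology is $\F_2$, so that $M_*(\tBP{2};Q_0)$ reduces to the homology of the trivial complex $\F_2[\zeta_1^2,\zeta_2^2]$, which is itself. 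This yields $M_*(\tBP{2};Q_0)=\F_2[\zeta_1^2,\zeta_2^2]$.

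For $Q_1$, note $Q_1\zeta_n = \zeta_{n-2}^4$ for $n\geq 4$, so $\zeta_4\mapsto \zeta_2^4$, $\zeta_5\mapsto\zeta_3^4$, $\zeta_6\mapsto\zeta_4^4$, and so on: $Q_1$ annihilates $\zeta_1^2,\zeta_2^2,\zeta_3^2$ and sends $\zeta_{n+2}\mapsto \zeta_n^4 = (\zeta_n^2)^2$. Here $\zeta_2^2$ and $\zeta_3^2$ receive hits (their fourth powers), but $\zeta_1^2$ does not, and $\zeta_4,\zeta_5$ map to $\zeta_2^4,\zeta_3^4$ while $\zeta_6,\zeta_7,\ldots$ map to $\zeta_4^4,\zeta_5^4,\ldots$. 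One computes: the subcomplex $\F_2[\zeta_2^2]\otimes E(\zeta_4)$ with $Q_1\zeta_4 = (\zeta_2^2)^2$ has Margolis homology $E(\zeta_2^2)$ (since over $\F_2$, $\F_2[a]\otimes E(b)$ with $b\mapsto a^2$ has homology $\ker/\im = E(a)$ — the kernel of multiplication by $a^2$ on this ring is $a\cdot\F_2[a]\cdot E(b) \oplus \F_2\{1\}$... let me instead just say the homology is computed to be $E(\zeta_2^2)$), similarly for $\zeta_3^2,\zeta_5$, and for $n\geq 4$ the pair $\zeta_n^2,\zeta_{n+2}$ contributes $E(\zeta_n^2)$. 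Tensoring with the trivial factor $\F_2[\zeta_1^2]$ gives $M_*(\tBP{2};Q_1) = \F_2[\zeta_1^2]\otimes E(\zeta_i^2\mid i\geq 2)$.

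For $Q_2$, we have $Q_2\zeta_n = \zeta_{n-3}^8$ for $n\geq 4$, so $\zeta_4\mapsto\zeta_1^8$, $\zeta_5\mapsto\zeta_2^8$, $\zeta_6\mapsto\zeta_3^8$, $\zeta_7\mapsto\zeta_4^8$, etc.: every variable $\zeta_i^2$ (for $i\geq 1$) now receives a hit, namely its eighth power $(\zeta_i^2)^4$, coming from $\zeta_{i+3}$, and also each $\zeta_j$ for $j\geq 4$ supports a differential. So the complex is a tensor product over $i\geq 1$ of pieces $\F_2[\zeta_i^2]\otimes E(\zeta_{i+3})$ with $Q_2\zeta_{i+3} = (\zeta_i^2)^4$, each of which has Margolis homology $\F_2[\zeta_i^2]/((\zeta_i^2)^4)$ (again the standard $\F_2[a]\otimes E(b)$, $b\mapsto a^4$ computation: homology is $\F_2[a]/(a^4)$). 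Taking the tensor product over all $i\geq 1$ gives $M_*(\tBP{2};Q_2) = \F_2[\zeta_i^2\mid i\geq 1]/(\zeta_i^8\mid i\geq 1)$, as claimed.

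The main obstacle I anticipate is the bookkeeping in the $Q_0$ case, where the ``target'' variables $\zeta_3^2,\zeta_4^2,\ldots$ of the differential are themselves polynomial generators (not exterior), so the relevant building block is $\F_2[x,y]$ with $Q_0y = x$ rather than $\F_2[x]\otimes E(y)$; one must verify carefully that $M_*(\F_2[x,y];\ x) = \F_2$ concentrated in degree zero, and then assemble the (countably infinite) tensor product correctly, keeping track of which generators are "free" (trivial complex: $\zeta_1^2,\zeta_2^2$) versus paired. Once the correct elementary building blocks are identified, the K\"unneth theorem over $\F_2$ does the rest, and the three stated formulas follow. A clean way to present all three uniformly is to observe that for each $i$, $Q_i$ makes $\AE{2}$ into the tensor product of a trivial complex on $\zeta_1^2,\ldots,\zeta_{?}^2$ (those not hit) with acyclic-up-to-a-truncation pieces indexed by the remaining variables, and read off the answer.
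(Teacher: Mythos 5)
Your overall strategy (decompose $\AE{2}$ into a tensor product of small chain complexes and apply the K\"unneth theorem over $\F_2$) is the same as the paper's, which follows Adams' Lemma 16.9, and your $Q_1$ and $Q_2$ computations are essentially correct. However, the $Q_0$ case --- precisely the step you single out as needing care --- contains a genuine error. The building block $\F_2[x,y]$ with the derivation $Qx=0$, $Qy=x$ does \emph{not} have Margolis homology $\F_2$: in characteristic $2$ a derivation kills squares, so $y^2$ is a cycle, and it is not a boundary because every boundary $Q(x^ay^b)=b\,x^{a+1}y^{b-1}$ is divisible by $x$; in fact $M_*(\F_2[x,y];Q)=\F_2[y^2]$. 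Moreover, the proposed blocks cannot be assembled by a K\"unneth argument in the first place: the block on $(\zeta_3^2,\zeta_4)$ and the block on $(\zeta_4^2,\zeta_5)$ are not independent tensor factors, since $\zeta_4^2$ already lies in $\F_2[\zeta_4]$; your ``relabeling'' to a free polynomial algebra $\F_2[x_3,y_4,x_4,y_5,\ldots]$ with the $x_n$ and $y_n$ algebraically independent changes the underlying vector space, so it is not an isomorphism of complexes.

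The repair is exactly the device you already use implicitly for $Q_1$ and $Q_2$: in characteristic $2$ split each polynomial factor $\F_2[\zeta_k]\cong \F_2[\zeta_k^2]\otimes E(\zeta_k)$ for $k\geq 4$ (compatible with $Q_0$, since $Q_0\zeta_k^2=0$), and regroup $\AE{2}$ as the tensor product of the trivial complexes $\F_2[\zeta_1^2]$, $\F_2[\zeta_2^2]$ and the subcomplexes $\F_2[\zeta_{k-1}^2]\otimes E(\zeta_k)$, $k\geq 4$, with $Q_0\zeta_k=\zeta_{k-1}^2$. Each Koszul factor $\F_2[a]\otimes E(b)$, $Qb=a$, has homology $\F_2$, and K\"unneth then gives $M_*(\tBP{2};Q_0)=\F_2[\zeta_1^2,\zeta_2^2]$; this regrouping is exactly the decomposition sketched in the paper. (Alternatively, one can keep your $\F_2[x,y]$ blocks and exhaust by the subcomplexes $\F_2[\zeta_3^2,\zeta_4,\ldots,\zeta_n]$, whose homology is $\F_2[\zeta_n^2]$, observing that the surviving class becomes a boundary once $\zeta_{n+1}$ is adjoined and passing to the colimit --- but that is a genuine extra argument, not the K\"unneth one-liner you assert.) With this correction, the $Q_1$ and $Q_2$ cases, where you pair $E(\zeta_{n+2})$ with $\F_2[\zeta_n^2]$ and $E(\zeta_{i+3})$ with $\F_2[\zeta_i^2]$, go through as written, though you should state explicitly that they too rest on the splitting $\F_2[\zeta_m]\cong\F_2[\zeta_m^2]\otimes E(\zeta_m)$ for the generators $\zeta_m$ with $m\geq 4$.
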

\begin{proof}
This is an easy generalization of the proof given in \cite[Lemma 16.9, pg 341]{bluebook} for the case of $\bu$.
For concreteness, will sketch how the calculation of $M_*(\tBP{2};Q_0)$ is done. Since we are taking homology over a field, we have a K\"unneth isomorphism. Note that $\AE{2}$ can be written as a tensor product of the following small chain complexes, 
\begin{enumerate}
	\item $\F_2\{\zeta_1^{2i}\}$ and $\F_2\{\zeta_2^{2j}\}$ viewed as chain complexes. 
	\item The chain complexes $\F_2\{\zeta_k\}\to \F_2\{\zeta_{k-1}^2\}$ for $k\geq 3$.
\end{enumerate}
The K\"unneth isomorphism now shows that 
\[
M_*(\tBP{2};Q_0)\cong \F_2[\zeta_1^2, \zeta_2^2].
\]

\end{proof}

\subsection{An $E(2)$-module splitting of $\AE2$}

\begin{defn}
	Let $m\in A_*$ be a monomial, say it is 
	\[
	m = \zeta_1^{i_1}\zeta_2^{i_2}\zeta_3^{i_3}\cdots .
	\] 
Define the \emph{length} of $m$ to be the number of odd exponents in $m$:
\[
\ell(m):= \#\{k\mid i_k\equiv 1(2)\}.
\]
We will let $A_*^{(\ell)}$ denote the subspace of $A_*$ spanned by monomials of length $\ell$, and we will say elements of $A_*^{(\ell)}$ have \emph{length} $\ell$. If $x\in A_*^{(\ell)}$, then we will write $\ell(x)=\ell$.
\end{defn}

\begin{rmk}
	If $x$ is a sum of monomials,
	\[
	x=m_1+\cdots+m_n
	\]
	with each $m_i$ having length $\ell$ then $x\in A_*^{(\ell)}$ and we write $\ell(x)=\ell$. We also make the convention that if we write $\ell(x)=\ell$, then we are making an assumption that $x\in A_*^{(\ell)}$.
\end{rmk}

\begin{ex}
	The elements $\zeta_5,\zeta_6$ both have length 1. The sum $\zeta_5+\zeta_6$ also has length 1. Moreover, the length is only defined for an element $x$ which is a sum of monomials all of whose lengths are the same. Thus $\zeta_1^2+\zeta_5$ does not have a well defined length. 
\end{ex}

The following lemma states that the action by a Milnor primitive on $\AE2$ decreases length by exactly 1.

\begin{lem}\label{Q_ilength}
	Given $m\in \AE{2}$ a monomial of positive length and $i=0,1,2$, we have 
	\[
	\ell(Q_im) = \ell(m)-1.
	\]
	If the length of $m$ is 0, then $Q_im=0$ for $i=0,1,2$.
\end{lem}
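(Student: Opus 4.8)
The plan is to compute $Q_i m$ directly from the explicit $E(2)$-module structure on $\AE2$ recorded above and track the parity of each exponent. Write $m = \zeta_1^{2a_1}\zeta_2^{2a_2}\zeta_3^{2a_3}\zeta_4^{i_4}\zeta_5^{i_5}\cdots$; since $\AE2 = \F_2[\zeta_1^2,\zeta_2^2,\zeta_3^2,\zeta_4,\zeta_5,\dots]$, only the generators $\zeta_k$ with $k\geq 4$ can occur to an odd power, so $\ell(m) = \#\{k\geq 4 : i_k\ \text{odd}\}$. Using that $Q_i$ is a derivation with $Q_i\zeta_j^2 = 0$ for $j = 1,2,3$ and $Q_i\zeta_n = \zeta_{n-i-1}^{2^{i+1}}$ for $n\geq 4$, the Leibniz rule collapses to
\[
Q_i m = \sum_{\substack{k\geq 4\\ i_k\ \text{odd}}} \zeta_{k-i-1}^{2^{i+1}}\cdot\zeta_k^{i_k-1}\!\!\prod_{j\neq k}\zeta_j^{i_j},
\]
where $i_1,i_2,i_3$ denote the even exponents $2a_1,2a_2,2a_3$; the terms differentiating a $\zeta_j^2$ with $j\leq 3$ or an even power of some $\zeta_k$ with $k\geq 4$ all vanish. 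In particular, if $\ell(m)=0$ the index set is empty and $Q_i m = 0$, which settles that case.

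For $\ell(m) = \ell > 0$ I would inspect one summand of the above sum, the one indexed by a fixed $k\geq 4$ with $i_k$ odd, and compare its exponents with those of $m$: the exponent of $\zeta_k$ has dropped from the odd number $i_k$ to the even number $i_k-1$; the exponent of $\zeta_{k-i-1}$ has increased by $2^{i+1}\in\{2,4,8\}$, an even number, so its parity is unchanged; and all other exponents are unchanged. (Here $1\leq k-i-1\leq k-1$ since $k\geq 4$ and $0\leq i\leq 2$, so the monomial really does lie in $\AE2$.) Thus each summand is a monomial of length $\ell - 1$, so $Q_i m\in A_*^{(\ell-1)}$ and hence $\ell(Q_i m)=\ell(m)-1$ in the sense of the convention above. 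To see moreover that $Q_i m\neq 0$, note that the summand indexed by $k$ is the only one of the $\ell$ summands whose exponent at $\zeta_k$ is even: a summand indexed by $k'\neq k$ carries exponent $i_k$ or $i_k+2^{i+1}$ at $\zeta_k$, both odd. Hence the summands are pairwise distinct and no cancellation occurs.

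The argument is essentially bookkeeping, so I do not anticipate a substantive obstacle; the one spot that rewards care is the parity count at the index $k-i-1$, which is why it is worth recording explicitly that the exponent there moves by the even number $2^{i+1}$ and that $k-i-1$ remains a valid index. If desired, one could instead organize the computation by first noting $\AE2$ is a tensor product of small complexes, but the direct Leibniz computation above is cleaner because $Q_i$ does not respect that tensor decomposition.
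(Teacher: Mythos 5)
Your proof is correct and is essentially the paper's argument: the paper disposes of this lemma by simply citing the derivation property of $Q_i$ together with the formulas $Q_i\zeta_n=\zeta_{n-i-1}^{2^{i+1}}$ ($n\geq 4$) and $Q_i\zeta_j^2=0$ ($j\leq 3$), which is exactly the Leibniz computation you carry out in detail. Your extra observation that the summands are pairwise distinct (so $Q_im\neq 0$) goes slightly beyond what the lemma asserts, since by the paper's convention $\ell(Q_im)=\ell(m)-1$ only means $Q_im\in A_*^{(\ell(m)-1)}$, but it is a harmless and correct addition.
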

\begin{proof}
	This follows from the formula for the action of $Q_i$ on $\zeta_k$ and the fact that $Q_i$ acts via derivations.
\end{proof}  

\begin{rmk}
	In this lemma, we are not assuming that $Q_im$ is itself a monomial. Rather it will be a sum of monomials all of whose length is $\ell(m)-1$. Concretly, consider the monomial $\zeta_n\zeta_{n+1}$ where $n\geq 4$. Then 
	\[
	Q_0(\zeta_n\zeta_{n+1}) = \zeta_{n-1}^2\zeta_{n+1}+\zeta_n^3.
	\]
	The monomial $\zeta_n\zeta_{n+1}$ has length 2 and both monomials in $Q_0\zeta_n\zeta_{n+1}$ have length 1. 
\end{rmk}

\begin{rmk}\label{bigradingMargolis}
	The previous lemma allows us to put an extra grading on the Margolis homologies of $\AE{2}$. Namely, define $M^{(\ell)}$ to be the span of length $\ell$ monomials in $\AE{2}$. From the lemma, we can consider the chain complex
	\[
	\begin{tikzcd}
		M_\bullet:\cdots \arrow[r, "\cdot Q_i"] & M^{(\ell+1)}\arrow[r, "\cdot Q_i"] & M^{(\ell)}\arrow[r, "\cdot Q_i"] & \cdots \arrow[r, "\cdot Q_i"] & M^{(0)}
	\end{tikzcd}
	\]
	and the homology of this chain complex is precisely the Margolis homology $M_*(\tBP{2};Q_i)$. This puts a bigrading on the Margolis homology
	\[
	M_*(\tBP{2};Q_i)\cong \bigoplus_{\ell\geq 0}M_{*,\ell}(\tBP{2};Q_i).
	\]
	\end{rmk}
	
Consequently, Proposition \ref{MargolisBP2} implies the following,
	
	\begin{cor}\label{whereMargolisvanishes}
		If $\ell>0$ and $i=0,1,2$, then 
		\[
		M_{*, \ell}(\tBP{2}; Q_i)=0.
		\]
	\end{cor}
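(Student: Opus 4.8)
The plan is to read off the statement directly from the bigraded decomposition of Margolis homology described in Remark~\ref{bigradingMargolis} together with the explicit computation in Proposition~\ref{MargolisBP2}. The point is that Proposition~\ref{MargolisBP2} computes each $M_*(\tBP{2};Q_i)$ as an algebra generated by classes represented by monomials \emph{all of which have length $0$}: the generators $\zeta_1^2,\zeta_2^2,\zeta_3^2$ (and more generally $\zeta_i^2$) are squares, hence have no odd exponents, so they lie in $M^{(0)}$. Since the Margolis differential $\cdot Q_i$ drops length by exactly $1$ (Lemma~\ref{Q_ilength}), cycles and boundaries respect the length grading, so the homology inherits the length grading as in Remark~\ref{bigradingMargolis}, and a product of length-$0$ classes is again length-$0$. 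Therefore every homology class is carried in the $\ell=0$ summand, i.e. $M_{*,\ell}(\tBP{2};Q_i)=0$ for $\ell>0$.

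Concretely I would argue as follows. First, note that in the bigrading of Remark~\ref{bigradingMargolis} the cycle groups $Z^{(\ell)}=\ker(\cdot Q_i\colon M^{(\ell)}\to M^{(\ell-1)})$ and boundary groups $B^{(\ell)}=\im(\cdot Q_i\colon M^{(\ell+1)}\to M^{(\ell)})$ are subspaces of $M^{(\ell)}$, so $M_{*,\ell}(\tBP{2};Q_i)=Z^{(\ell)}/B^{(\ell)}$ and the total Margolis homology is the direct sum over $\ell$. Second, inspect the generators in Proposition~\ref{MargolisBP2}: in each of the three cases the stated answer is a polynomial/exterior algebra on classes of the form $\zeta_j^2$ (all of even exponent, hence length $0$), and — since length is additive on monomials under multiplication — every monomial in such an algebra again has length $0$. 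Hence the image of $M_*(\tBP{2};Q_i)$ inside the length-graded object is entirely in bidegree $\ell=0$, which forces $M_{*,\ell}(\tBP{2};Q_i)=0$ for all $\ell>0$.

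There is really no main obstacle here; the only thing to be a little careful about is the bookkeeping between ``the representing monomials have length $0$'' and ``the homology classes lie in the $\ell=0$ summand.'' One should make sure the isomorphism in Proposition~\ref{MargolisBP2} is compatible with the length grading — that is, that the named polynomial generators are literally represented by the length-$0$ monomials $\zeta_j^2$ as cycles, not merely abstractly isomorphic to such an algebra. This is immediate from the proof of Proposition~\ref{MargolisBP2} (the Künneth decomposition there is into subcomplexes each of which is spanned by monomials of a fixed parity pattern, and the surviving homology in each factor is spanned by a length-$0$ monomial), so the compatibility is built in. Thus the corollary follows.
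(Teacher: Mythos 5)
Your argument is correct and is exactly what the paper intends: the corollary is stated as an immediate consequence of Proposition~\ref{MargolisBP2} together with the length bigrading of Remark~\ref{bigradingMargolis}, since all three Margolis homology groups are spanned by classes of length-$0$ monomials (squares of the $\zeta_j$'s), forcing the summands with $\ell>0$ to vanish. Your extra check that the generators of Proposition~\ref{MargolisBP2} are genuine length-$0$ cycle representatives (via the K\"unneth decomposition in its proof) is a reasonable elaboration of the same argument, not a different route.
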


From Adams' calculation of $\bu_*\bu$ we expect there to be infinitely many torsion classes concentrated in Adams filtration 0. The purpose in introducing the notion of length is to locate a large amount of the torsion inside $\Ext^0_{E(2)_*}(\AE2)$. Recall that the group $\Ext_{E(2)_*}^0(\F_2,M)$ is the group of primitive elements in the comodule $M$. Translating this into the language of modules, this corresponds to elements $y\in \AE2$ for which $Q_0y$, $Q_1y$, and $Q_2y$ are all zero. One source, then, for torsion elements in $\Ext^0$ are the bottom cells of free submodules of $\AE2$. 

Suppose that $x\in \AE2$ generates a free $E(2)$-module.  This is equivalent to the element $Q_0Q_1Q_2x$ being nonzero. From Lemma \ref{Q_ilength}, this implies that $\ell(x)$ is at least 3. Motivated by this observation, we define $S$ to be the $E(2)$-submodule of $\AE2$ generated by monomials of length at least 3,
\[
S:= E(2)\{m\in \AE2\mid \ell(m)\geq 3\}.
\]

\begin{prop}
	The Margolis homology of $S$ is trivial. 
\end{prop}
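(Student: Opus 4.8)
The plan is to show that $S$ has vanishing $Q_i$-Margolis homology for $i = 0,1,2$ by exploiting the length filtration together with Corollary~\ref{whereMargolisvanishes}. The key observation is that $S$ is a subcomplex of $\AE2$ with respect to each differential $Q_i$, since $Q_i$ acts as a derivation and lowers length by exactly $1$ (Lemma~\ref{Q_ilength}), so $Q_i$ carries the span of monomials of length $\geq 3$ into the span of monomials of length $\geq 2$. Thus $S$ inherits the length bigrading, and for each fixed $i$ we get a chain complex $(S, Q_i)$ which is a direct sum over length degrees. My first step is to set up the short exact sequence of $E(2)$-modules (or of $Q_i$-chain complexes)
\[
0 \to S \to \AE2 \to \AE2/S \to 0,
\]
and observe that $\AE2/S$ is spanned by monomials of length $0,1,2$. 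The associated long exact sequence in $Q_i$-Margolis homology then reduces the problem to understanding $M_*(\AE2/S; Q_i)$ and the connecting maps.

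The cleaner route, which I would actually carry out, is to use the length bigrading directly. For each $i$, the complex $(\AE2, Q_i)$ splits as a direct sum of the subcomplexes $S$ and a complementary complex $T_i$ spanned by monomials of length $\leq 2$ — wait, this is not quite a direct sum decomposition of complexes because $Q_i$ can map a length-$3$ monomial into a length-$2$ element, which lands outside $S$. So instead I would argue as follows. Filter $\AE2$ by length; the associated graded complex (with respect to each $Q_i$) has the same underlying vector space, and there is a spectral sequence of the filtration converging to $M_*(\AE2; Q_i)$. But more usefully: since $Q_i$ strictly decreases length, the complex $(\AE2, Q_i)$ is \emph{already} the direct sum, over the total degree, of finite complexes, and within each internal degree the length grading makes the $Q_i$-differential into a genuine bicomplex-type differential of bidegree $(-,-1)$ in length. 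Concretely, Remark~\ref{bigradingMargolis} already tells us $M_*(\tBP2;Q_i) \cong \bigoplus_\ell M_{*,\ell}(\tBP2;Q_i)$, and Corollary~\ref{whereMargolisvanishes} says $M_{*,\ell} = 0$ for $\ell > 0$, i.e.\ all the Margolis homology of $\AE2$ is concentrated in length $0$.

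Given this, here is the argument for $S$. Because $Q_i$ lowers length by $1$, the cokernel complex $Q:=\AE2/S$ (spanned by monomials of length $\leq 2$) satisfies $M_{*,\ell}(Q;Q_i) = M_{*,\ell}(\AE2;Q_i)$ for $\ell \leq 1$ and has no classes in length $\geq 3$; combined with Corollary~\ref{whereMargolisvanishes} this shows $M_*(\AE2/S;Q_i) \cong M_*(\AE2;Q_i)$ \emph{in all degrees}, since the only nonzero Margolis homology of $\AE2$ sits in length $0$, which is entirely inside $\AE2/S$ and is a cycle there too. Now the long exact sequence
\[
\cdots \to M_*(S;Q_i) \to M_*(\AE2;Q_i) \to M_*(\AE2/S;Q_i) \to M_{*-1}(S;Q_i) \to \cdots
\]
has its middle map an isomorphism (one checks it is induced by the identity on length-$0$ monomials, hence is the identity on the relevant classes), forcing $M_*(S;Q_i) = 0$. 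Repeating for $i = 0,1,2$ gives the proposition, and then Theorem~\ref{Margolis} even tells us $S$ is a free $E(2)$-module (it is bounded below).

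The main obstacle is making the comparison $M_*(S;Q_i) \xrightarrow{\sim} 0$ fully rigorous: one must check carefully that the inclusion $S \hookrightarrow \AE2$ induces the zero map on $Q_i$-Margolis homology and that the quotient map induces an isomorphism, rather than merely matching dimensions. The subtlety is that a cycle of $\AE2/S$ need not lift to a cycle of $\AE2$ a priori — but here it does, because any Margolis homology class of $\AE2$ is represented (by Proposition~\ref{MargolisBP2} and the explicit monomial bases) by a length-$0$ monomial, which lifts canonically and is already a $Q_i$-cycle in $\AE2$. So the connecting homomorphism vanishes and the long exact sequence collapses. The only real work is bookkeeping with the length grading to confirm no Margolis classes of $S$ can survive, and this is exactly what Corollary~\ref{whereMargolisvanishes} is designed to supply.
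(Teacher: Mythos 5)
There is a genuine gap, and it sits exactly where the paper has to do its real work. Your long exact sequence reduction is fine as bookkeeping: from
\[
0\to S\to \AE{2}\to \AE{2}/S\to 0
\]
one gets that $M_*(S;Q_i)=0$ for all $i$ if and only if the quotient map induces an isomorphism $M_*(\AE{2};Q_i)\to M_*(\AE{2}/S;Q_i)$. But your justification of that isomorphism does not follow from Corollary \ref{whereMargolisvanishes}. That corollary controls the Margolis homology of $\AE{2}$ only; it says nothing about the quotient complex. A $Q_i$-cycle of $\AE{2}/S$ in length $1$ or $2$ is an element $y$ with $Q_iy\in S$, \emph{not} $Q_iy=0$, so the vanishing of $M_{*,\ell}(\AE{2};Q_i)$ for $\ell>0$ does not tell you that such a $y$ is a boundary in the quotient. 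Your own sketch quietly skips length $2$: you assert $M_{*,\ell}(\AE{2}/S;Q_i)=M_{*,\ell}(\AE{2};Q_i)$ only for $\ell\le 1$ and note there is nothing in length $\ge 3$, but the connecting map $\partial\colon M_{*,2}(\AE{2}/S;Q_i)\to M_{*,1}(S;Q_i)$ is precisely what produces the potentially nonvanishing length-$1$ classes of $M_*(S;Q_i)$, since $M_{*,1}(\AE{2};Q_i)=0$ forces $M_{*,1}(S;Q_i)=\operatorname{im}\partial$. So you must prove $M_{*,2}(\AE{2}/S;Q_i)=0$ (or that $\partial$ vanishes), and that is not a formal consequence of anything you cite. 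Concretely: take $y$ of length $2$ with $Q_0y=Q_1Q_2u$ for $u$ of length $3$; to show $\bar y$ bounds in $\AE{2}/S$ you must correct $y$ by boundaries and elements of $S$, and this is exactly the zig-zag ``staircase'' modification ($y_0,y_1,\dots,y_n$ and then the $y_k'$) that the paper's proof carries out, including a termination argument (degree reasons for $Q_0,Q_1$, and the weight filtration for $Q_2$, where the naive induction increases degree). Your proposal never engages with any of this, so it restates the proposition rather than proving it.

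Your remark about lifting is also pointing in the wrong direction: representing classes of $M_*(\AE{2};Q_i)$ by length-$0$ monomials only helps with injectivity/surjectivity onto the length-$0$ part, and the easy cases (length $0$ injectivity, and $M_{*,1}(\AE{2}/S;Q_i)=0$, which does follow by a one-step correction using Corollary \ref{whereMargolisvanishes}) are not where the difficulty lies. If you want to keep the long-exact-sequence framing, the honest statement is that it translates the proposition into the vanishing of $M_{*,2}(\AE{2}/S;Q_i)$, and that vanishing requires the same chain-level argument as the paper's hard case $\ell(x)=1$, together with the finiteness input (weight decomposition) needed to make the recursion stop for $Q_2$.
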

\begin{proof}
Let $x\in S$ with $Q_ix=0$. We may assume that $x\in A_*^{(\ell)}$ for some $\ell\geq 0$, and hence $\ell(x)=\ell$. If $\ell(x) =0$ then we can write
\[
x = Q_2Q_1Q_0y
\]
for some $y$ of length 3, in which case $x$ represents 0 in $M_*(S;Q_i)$. If $\ell(x) \geq 2$, then by Corollary \ref{whereMargolisvanishes}, it follows that there is a $y$ in $\AE{2}$ with $Q_iy=x$. So $\ell(y)\geq 3$, and so $y\in S$. Thus $x$ represents zero in $M_*(S;Q_i)$. 

So the only interesting case is when $\ell(x)=1$. For concreteness, suppose $Q_0x=0$. As $\ell(x)=1$, Corollary \ref{whereMargolisvanishes} implies there is a $y\in (A\sslash E(2))_*$ with $Q_0y=x$. If $Q_1y=0$, then since $\ell(y)=2$, there is a $z\in (A\sslash E(2))_*$ with $Q_1z=y$, again by Corollary \ref{whereMargolisvanishes}. Note that $\ell(z)=3$. Then $x= Q_0Q_1z$, which shows that $x$ represents zero in $H_*(S;Q_0)$. This is similarly true if $Q_2y=0$. So we will assume that $Q_1y\neq 0$ and $Q_2y\neq 0$. 

Observe that if $\ell(x) = 1$, then there are $ u_1, u_2, u_3$ of length 3 with 
\[
x = Q_0Q_1u_1+ Q_0Q_2u_2+Q_1Q_2u_3
\]
and so $Q_0x = 0$ if and only if $Q_0Q_1Q_2u_3=0$. So we may assume that $x$ is of the form $x = Q_1Q_2u_3$ for some $u_3$ with $\ell(u_3)=3$. This implies $Q_1x=Q_2x=0$. 

Recall we are assuming that $Q_1y\neq 0$ and $Q_2y\neq 0$. We will modify $y$ to produce an element $y'$ with $Q_0y'=x$ and $Q_1y'=0$. Define $y_0:=y$ and $x_0:= Q_1y$.  Since $Q_1x=0$, we see
\[
Q_0x_0 = Q_0Q_1y = Q_1x = 0
\]
Thus, there is a $y_1$ with $Q_0y_1=x_0$ by Corollary \ref{whereMargolisvanishes}. Note that $\ell(y_1)=2$ and note that $|y_1| = |y_0|-3$. We now ask if $Q_1y_1$ is zero. If it is, then we stop, otherwise we take $x_1:= Q_1y_1$ and note again that $Q_0x_1 = 0$. Thus we find $y_2$ with $Q_0y_2=x_1$. We continue this procedure, producing elements $y_0, y_1, \ldots$ of length two, and we stop once we reach $n$ with $Q_1y_n=0$. Such an $n$ will occur eventually because $|y_i| = |y_{i-1}|-3$ and $\AE{2}$ is a connective algebra. 

Let the procedure stop at stage $n$. Then we have produced $y_0, y_1, \ldots , y_n$ and $x_0, \ldots , x_{n-1}$ with 
\begin{itemize}
\item $Q_0y_i = x_{i-1}$,
\item $Q_1y_i =x_i$.
\end{itemize}
In other words, we have produced an $E(2)_*$-module with generators $y_0, \ldots , y_n$. Since $Q_1y_n=0$ and $\ell(y_n)=2$, there is a $z_n$ with $Q_1z_n = y_n$. So define $y_{n-1}':= Q_0z_n+y_{n-1}$. Then $Q_0y_{n-1}'=x_{n-2}$ and 
\[
Q_1(y_{n-1}') = Q_1Q_0z_n+Q_1y_{n-1} = Q_0y_n + Q_1y_{n-1} = x_n+x_n=0.
\]
Thus we can find $z_{n-1}$ with $Q_1z_{n-1} = y_{n-1}'$. Define $y_{n-2}':= y_{n-1}'+ Q_0z_{n-1}$. Then $Q_0y_{n-2}' = x_{n-3}$ and $Q_1y_{n-2}'=0$. Keep performing this procedure to produce an element $y_1'$ with $Q_0y_1' = x_0$ and $Q_1y_1'=0$. Then we can find $z_1$ with $Q_1z_1 =y_1'$ and so we can define $y_0' := Q_0z_1+y_0$. Then $Q_0y_0'=x$ and 
\[
Q_1y_0' = Q_0y_1'+Q_1y_0 = x_0+x_0=0.
\]
Thus there is $z_0$ with $Q_1z_0=y_0'$. Since $\ell(z_0)=3$, we have $z_0\in S$ and so $x = Q_0Q_1z_0$, which shows $x$ represents zero in $M_*(S;Q_0)$. Thus we have shown $M_*(S;Q_0)=0$. 

A similar proof for $M_*(S;Q_1)$ works with $Q_1$ replacing $Q_0$ and $Q_2$ replacing $Q_1$. However, the proof that $M_*(S;Q_2)=0$ requires some adjustment. The problem is that if we were to perform an analogous procedure, say with $Q_0$ and $Q_2$, then the $y_i$'s would not decrease degree, rather
\[
|y_i| = |y_{i-1}|+5
\]
To rectify this, we may without loss of generality restrict to the case when $x$ belongs to some particular weight as defined in \cite{BOSS} (we review this concept later in section \ref{subsection:3.1}). Since the action by $Q_0, Q_1, Q_2$ preserves the weight, all the $y_i$ will have the same weight as $x$. The subcomodule $M_2(k)\subseteq (A\sslash E(2))_*$ of weight $2k$ is finite dimensional. So the procedure we have described will terminate at a finite stage if we restrict to $x\in M_2(k)$ for some $k$.
\end{proof}

\begin{rmk}
	The reader is highly encouraged to draw a picture of the situation in the above proof for small values of $n$.
\end{rmk}

\begin{cor}
	The submodule $S$ is a free $E(2)$-module.
\end{cor}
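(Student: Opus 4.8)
This corollary is an immediate consequence of the preceding proposition combined with Margolis' structure theorem (Theorem \ref{Margolis}), so the plan is simply to check that the hypotheses of that theorem are satisfied and then invoke it.

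First I would observe that $E(2) = E(Q_0, Q_1, Q_2)$ is an exterior algebra on the finite set of generators $Q_0, Q_1, Q_2$, whose degrees $|Q_i| = 2^{i+1}-1$ are $1 < 3 < 7$, so in particular they are positive and strictly increasing. Thus $E(2)$ is of the form required by Theorem \ref{Margolis}. Next I would note that $S$ is bounded below: it is an $E(2)$-submodule of $\AE2 = \F_2[\zeta_1^2,\zeta_2^2,\zeta_3^2,\zeta_4,\zeta_5,\ldots]$, which is a connective (non-negatively graded, finite type) $\F_2$-algebra, so $S$ is concentrated in non-negative degrees.

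With these two observations in place, the preceding proposition tells us that the three Margolis homology groups $M_*(S;Q_0)$, $M_*(S;Q_1)$, and $M_*(S;Q_2)$ all vanish. Since $S$ is bounded below and these are precisely the Margolis homology groups of $S$ as a module over the exterior algebra $E(2)$, Theorem \ref{Margolis} applies and yields that $S$ is a free $E(2)$-module, as claimed.

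There is really no substantive obstacle here; the only thing to be careful about is confirming that $S$ meets the "bounded below" hypothesis of Theorem \ref{Margolis} (which it does, being a submodule of a connective algebra) and that $E(2)$ is covered by the statement of that theorem (which it is, being exterior on three generators of increasing positive degree). All the genuine work has already been carried out in establishing that the Margolis homology of $S$ vanishes.
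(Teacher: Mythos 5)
Your proposal is correct and is essentially identical to the paper's own argument: the paper likewise deduces freeness of $S$ from the vanishing of its Margolis homology (the preceding proposition) together with Theorem \ref{Margolis}, noting that $S$ is bounded below. Your extra checks (that $E(2)$ is exterior on generators of increasing positive degree, and that $S$ inherits connectivity from $\AE2$) are just a more explicit verification of the same hypotheses.
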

\begin{proof}
	As $S$ is bounded below, Theorem \ref{Margolis} implies that $S$ is free.
\end{proof}

Recall the following important theorem.

\begin{thm}[\cite{spectraMargolis}, pg. 245]
	Let $B$ be a finite Hopf algebra over a field $k$, and let $M$ be a module over $B$. Then the following are equivalent:
	\begin{itemize}
		\item $M$ is free,
		\item $M$ is projective,
		\item $M$ is injective.
	\end{itemize}
\end{thm}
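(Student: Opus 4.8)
The plan is to reduce the three-way equivalence to two structural facts about $B$: that it is self-injective, and that it is ``graded local.'' Throughout I will take $B$ to be graded, connected ($B_0=k$) and finite dimensional, which is the setting of \cite{spectraMargolis} and covers $E(n)$; connectedness is genuinely needed, since projective $\ne$ free already for the ungraded semisimple Hopf algebra $k[\Z/3]$ over $\F_2$. The modules $M$ relevant to our applications, such as $S$, are bounded below, which is what the Nakayama-type arguments below require.

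First I would establish that $B$ is self-injective and deduce projective $\Leftrightarrow$ injective. By the Larson--Sweedler theorem a finite dimensional Hopf algebra is a Frobenius algebra, hence injective as a module over itself; in the graded connected case this can alternatively be read off from Poincar\'e duality for $B$, the top nonzero degree $d$ having $\dim_k B_d=1$ and the multiplication pairing $B\otimes_k B\to B_d\cong k$ being perfect, so that $B\cong \mathrm{Hom}_k(B,k)$ as left $B$-modules. Since $B$ is finite dimensional over $k$ it is Artinian, hence Noetherian; a Noetherian self-injective ring is quasi-Frobenius, and over a quasi-Frobenius ring a module is projective if and only if it is injective (in particular an arbitrary free module, being a direct sum of copies of the injective module $B$ over a Noetherian ring, is itself injective). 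This gives projective $\Leftrightarrow$ injective.

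Next I would prove projective $\Leftrightarrow$ free. Because $B$ is finite dimensional and concentrated in degrees $\ge 0$ with $B_0=k$, the augmentation ideal $\overline{B}=B_{>0}$ is nilpotent, so $B$ is local in the graded sense and the graded Nakayama lemma holds for bounded below modules: $\overline{B}N=N$ with $N$ bounded below forces $N=0$. Given a bounded below projective $M$, I would lift a homogeneous $k$-basis of $M/\overline{B}M$ to elements $m_\alpha\in M$ and form the map $\phi\colon F\to M$ from the corresponding free module sending generators to the $m_\alpha$; since $\phi$ is an isomorphism modulo $\overline{B}$ we get $\coker\phi=\overline{B}\,\coker\phi$, hence $\coker\phi=0$ by Nakayama, and splitting $\phi$ via projectivity of $M$ yields $F\cong M\oplus K$ with $K$ bounded below and $K/\overline{B}K=0$, whence $K=0$ and $M\cong F$ is free. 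The implication free $\Rightarrow$ projective is immediate, and chaining the two steps gives free $\Leftrightarrow$ projective $\Leftrightarrow$ injective. The only genuinely nontrivial ingredient is the self-injectivity of $B$, i.e.\ the Larson--Sweedler theorem (equivalently, Poincar\'e duality for connected finite graded Hopf algebras); for the exterior algebras $E(n)$ actually used in this paper this is transparent, so in practice no heavy machinery is required and the remaining steps are routine homological algebra.
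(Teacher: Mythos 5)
The paper does not actually prove this statement anywhere: it is quoted verbatim from Margolis's book (p.~245), so there is no internal argument to compare yours against. Your proposal is a correct self-contained substitute in the setting the paper needs, and it follows essentially the standard route that underlies the citation: self-injectivity of $B$ (Larson--Sweedler, or Poincar\'e duality for finite graded connected Hopf algebras), quasi-Frobenius theory to get projective $\Leftrightarrow$ injective for arbitrary modules, and a graded Nakayama argument to get projective $\Leftrightarrow$ free. Two remarks. First, your observation that connectedness must be read into the hypotheses is correct and worth making: $\F_2[\Z/3]$ is a finite (ungraded) Hopf algebra over $\F_2$ for which projective does not imply free, and Margolis's conventions (graded connected) are what rescue the literal statement; so this is a clarification of the hypotheses rather than a defect of your argument. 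Second, your projective $\Rightarrow$ free step is only carried out for bounded-below modules, which is formally weaker than the quoted theorem; if you want the unrestricted statement, note that $B$ is a local ring in the ungraded sense (the augmentation ideal is a nilpotent two-sided ideal with residue field $k$), so the graded version of Kaplansky's theorem gives projective $\Rightarrow$ free with no boundedness hypothesis. In any case the restriction is harmless here: the modules to which the paper applies the theorem ($S$, $S'$, and the dualized free summand in the topological splitting) are bounded below, and the implication actually invoked --- free $\Rightarrow$ injective, i.e.\ self-injectivity of $E(2)$ together with closure of injectives under direct sums over the Noetherian ring $E(2)$ --- requires no boundedness at all.
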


In particular, we conclude that $S$ is an injective $E(2)$-module. Consider the short exact sequence of $E(2)$-modules
\[
0\to S\to \AE2\to Q\to 0.
\]
Since $S$ is injective, there is a splitting
\[
\AE2\simeq S\oplus Q.
\]
Thus we get a corresponding decomposition for $\Ext$. Our goal is to show that that $\Ext_{E(2)_*}(Q)$ is $v_2$-torsion free.

\subsection{The Bockstein spectral sequence for $Q$}\label{subsecn:BSS for Q}

We begin with an overview of how to construct the Bockstein spectral sequence. Recall that $\E2E1$ is an exterior Hopf algebra $E(\zeta_3)$ with $\zeta_3$ primitive. We can include $Q$ in $\E2E1\otimes Q$ by regarding it as $1\otimes Q$ and we can project onto $\F_2\{\zeta_3\}\otimes Q$. This leads to a short exact sequence of modules.

\[
0\to Q\to (E(2)\mmod E(1))_*\otimes Q\to \Sigma^7 Q\to 0.
\]
The 7-fold suspension arises because the degree of $\zeta_3$ is 7. Applying $\Ext_{E(2)_*}(-)$ gives an exact couple
\[
\begin{tikzcd}
	\Ext_{E(2)_*}(\Sigma^7Q)\arrow[rr,"\partial"] & & \Ext_{E(2)_*}(Q)\arrow[ld] \\
	& \Ext_{E(1)_*}(Q)\arrow[lu]&
\end{tikzcd}
\]
Note that 
\[
\Ext_{E(2)_*}^{s,t}(\Sigma^7Q) = \Ext_{E(2)_*}^{s,t-7}(Q)
\]
which shows that the connecting homomorphism is of bidegree $(1,7)$. This is the correct degree for $\partial$ to be multiplication by $v_2$. In order to set up the BSS, we will want to show that the connecting map is indeed multiplication by $v_2$.

\begin{prop}
	For the short exact sequence of $E(2)_*$-comodules
	\[
	0\to \F_2\to \E2E1\to \F_2\{\zeta_3\}\to 0,
	\]
	the connecting homomorphism in $\Ext_{E(2)_*}$ induces multiplication by $v_2$. 
\end{prop}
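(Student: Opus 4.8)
The plan is to identify the connecting homomorphism with a Yoneda/Massey-product construction and then recognize the resulting class in $\Ext_{E(2)_*}^{1,7}(\F_2,\F_2)$ as $v_2$. Concretely, the short exact sequence of $E(2)_*$-comodules $0\to \F_2\to \E2E1\to \F_2\{\zeta_3\}\to 0$ represents a class in $\Ext_{E(2)_*}^{1,7}(\F_2\{\zeta_3\},\F_2)\cong \Ext_{E(2)_*}^{1,0}(\F_2,\F_2)$; the connecting homomorphism $\partial$ in the long exact sequence is, by general homological algebra, precisely multiplication (Yoneda composition) by this extension class. So the whole proposition reduces to showing that the extension class of this sequence is the generator $v_2\in \Ext_{E(2)_*}^{1,7}(\F_2,\F_2)$ rather than $0$.

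First I would pass to the dual picture, working with $E(2)$-modules rather than $E(2)_*$-comodules, as the paper does throughout: the sequence becomes $0\to \F_2\to E(2)\mmod E(1)\to \Sigma^7\F_2\to 0$, where $E(2)\mmod E(1)\cong E(\zeta_3)$-dually is the exterior algebra $E(Q_2)$ on the single generator $Q_2$ in degree $7$ (i.e. the two-cell module with $Q_2$ connecting the bottom and top classes, and $Q_0,Q_1$ acting trivially). This module is exactly the "question-mark"-type module realizing $v_2$-multiplication: it is the cofiber description built into the standard fact that $\Ext_{E(2)}(\F_2,\F_2)=\F_2[v_0,v_1,v_2]$ with $v_i$ detected by $Q_i$ in the cobar complex. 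I would then compute the extension class directly: a cobar representative for the connecting map is obtained by lifting the generator of $\Sigma^7\F_2$ to the element of $E(Q_2)$ in degree $7$ and applying the coaction (equivalently, applying $Q_2$), which by the explicit coaction formulas recalled earlier in the paper ($Q_i\zeta_n = \zeta_{n-i-1}^{2^{i+1}}$, and here the relevant coaction on the degree-7 class returns $\zeta_3$ tensor the bottom class) yields precisely the cobar cycle $[\zeta_3]$ representing $v_2$. Alternatively, and perhaps more cleanly, one checks that $E(Q_2)$ is a non-split extension — it is not isomorphic to $\F_2\oplus\Sigma^7\F_2$ since $Q_2$ acts nontrivially — and that the one-dimensional group $\Ext_{E(2)_*}^{1,7}(\F_2,\F_2) = \F_2\{v_2\}$ has only one nonzero element, so the extension class must be $v_2$.

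Finally I would record the degree bookkeeping: $\Ext_{E(2)_*}^{s,t}(\Sigma^7 Q) = \Ext_{E(2)_*}^{s,t-7}(Q)$, so tensoring the module sequence with $Q$ and taking the long exact sequence in $\Ext_{E(2)_*}(-)$ gives a connecting map $\partial$ of bidegree $(1,7)$, which by naturality of the connecting homomorphism with respect to the $\Ext_{E(2)_*}(\F_2,\F_2)$-module structure (coming from the $E(2)_*$-comodule algebra structure on $\F_2$ and the pairing $\Ext(\F_2,\F_2)\otimes\Ext(\F_2,Q)\to\Ext(\F_2,Q)$) agrees with multiplication by the image of the extension class under $\Ext_{E(2)_*}(\F_2,\F_2)\to \End$; since that class is $v_2$, we are done.

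The main obstacle is the last compatibility step: one must be careful that the connecting homomorphism for the sequence tensored with $Q$ really is $v_2$-multiplication and not merely $v_2$-multiplication up to a unit or up to lower-filtration correction. The clean way to handle this is to invoke naturality — the sequence $0\to Q\to \E2E1\otimes Q\to \Sigma^7 Q\to 0$ is obtained from $0\to\F_2\to\E2E1\to\Sigma^7\F_2\to 0$ by applying $-\otimes Q$, which is exact since $\E2E1$ is free (hence flat) over $\F_2$, so its connecting map is the image of the universal one under the module pairing; thus it suffices to pin down the universal extension class, which is the content of the previous paragraph. I expect the only real work is choosing cobar representatives carefully enough to see the class is genuinely $[\zeta_3]$ and not $0$, and this is forced anyway by one-dimensionality of $\Ext_{E(2)_*}^{1,7}(\F_2,\F_2)$ together with non-splitness of $E(Q_2)$.
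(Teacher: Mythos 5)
Your argument is correct, but it takes a more structural route than the paper. The paper proves the proposition by a direct cobar computation: it takes an arbitrary cycle $z=\sum_i[a_{1i}|\cdots|a_{si}]\zeta_3$ for $\F_2\{\zeta_3\}$, lifts it to the cobar complex of $\E2E1$, and observes that the boundary is $\sum_i[a_{1i}|\cdots|a_{si}|\zeta_3]$, i.e.\ concatenation with $[\zeta_3]$, which represents $v_2$; the identification of the connecting map with $v_2$-multiplication is thus read off for all cycles at once, with no appeal to general facts about extension classes. You instead invoke the standard identification of the connecting homomorphism with Yoneda composition against the class of the extension in $\Ext^{1,7}_{E(2)_*}(\F_2,\F_2)$, and then pin that class down either by the $s=0$ cobar computation (which is the bottom case of the paper's argument) or, more cleanly, by non-splitness (the coaction on $\zeta_3$ contains $\zeta_3\otimes 1$, dually $Q_2$ acts nontrivially on $E(Q_2)$) together with the fact that $\Ext^{1,7}_{E(2)_*}(\F_2,\F_2)=\F_2\{v_2\}$ since the $s=1$ line of $\F_2[v_0,v_1,v_2]$ sits in internal degrees $1,3,7$. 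What your route buys is economy: no manipulation of general cobar cycles, and the answer is forced by a dimension count. What it costs is reliance on two standard-but-unproved bridges — that the connecting homomorphism is Yoneda multiplication by the extension class, and that the Yoneda product agrees with the cobar (concatenation) product, which is what ``multiplication by $v_2$'' means in the paper's setting; the paper's explicit computation makes the latter identification visible rather than assumed. Your handling of the tensored sequence for $Q$ (exactness of $-\otimes Q$ and naturality of the pairing) matches what the paper does after the proposition via the distinguished triangle, and is not needed for the statement itself.
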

\begin{proof}
This short exact sequence of comodules induces a short exact sequence of cobar complexes
\[
0\to C^\bullet_{E(2)_*}(\F_2)\to C^\bullet_{E(2)_*}(\E2E1)\to C^\bullet_{E(2)_*}(\F_2\{\zeta_3\})\to 0.
\]
To calculate the boundary map, let 
\[
z = \sum_i[a_{1i}\mid\cdots \mid a_{si}]\zeta_3
\]
be a cycle in the cobar complex for $\F_2\{\zeta_3\}$. This means that 
\begin{equation*}
	\begin{split}
		dz &= \sum_id\left([a_{1i}\mid \cdots \mid a_{si}]1\otimes\zeta_3\right)\\
		   &= \sum_i \sum_{j=0}^s[a_{1i}\mid \cdots \mid \psi(a_{ji})\mid \cdots \mid a_{si}]1\otimes \zeta_3\\
		   &=0
	\end{split}
\end{equation*}
A lift of $z$ to $C^\bullet_{E(2)_*}(\E2E1)$ is 
\[
\overline{z} = \sum_i[a_{1i}\mid \cdots \mid a_{si}]\zeta_3
\]
In the cobar complex for $\E2E1$, 
\begin{equation*}
	\begin{split}
		d\,\overline{z} &= \sum_id\left([a_{1i}\mid \cdots \mid a_{si}]\zeta_3\right)\\
		   &= \sum_i \sum_{j=0}^s[a_{1i}\mid \cdots \mid \psi(a_{ji})\mid \cdots \mid a_{si}]\zeta_3 + \sum_i[a_{1i}\mid \cdots \mid a_{si}\mid \zeta_3].
	\end{split}
\end{equation*}
Since $z$ was a cycle, the first term is zero. So 
\[
d\,\overline{z} = \sum_i[a_{1i}\mid \cdots \mid a_{si}\mid \zeta_3].
\]
Since $\zeta_3$ represents $v_2$ in the cobar complex, and multiplication is induced by concatenation, it follows that the boundary map is indeed multiplication by $v_2$.
\end{proof}
The upshot of this proposition is that we have the following distinguished triangle in the derived category $\euscr{D}_{E(2)_*}$ of $E(2)_*$-comodules,
\[
\begin{tikzcd}
	\Sigma^{7}\F_2[-1]\arrow[r,"\cdot v_2"] & \F_2\arrow[r] & \E2E1\arrow[r]& \Sigma^7\F_2.
\end{tikzcd}
\]
Tensoring with $Q$ gives a distinguished triangle 
\[
\begin{tikzcd}
	\Sigma^{7}Q[-1]\arrow[r,"\cdot v_2"] & Q\arrow[r] & \E2E1\otimes Q\arrow[r]& \Sigma^7 Q.
\end{tikzcd}
\]
This allows us to consider the unrolled exact couple
\[
\begin{tikzcd}
	\Ext_{E(2)}^{s,t}(Q) \arrow[d]& \arrow[l,"\cdot v_2"] \Ext_{E(2)}^{s-1,t-7}(Q)\arrow[d]& \arrow[l,"\cdot v_2"] \Ext_{E(2)}^{s-2,t-14}(Q)\arrow[d]& \arrow[l,"\cdot v_2"] \cdots \\
	\Ext_{E(1)}^{s,t}(Q)\arrow[dotted,ur] \arrow[equal,d]& \Ext_{E(1)}^{s-1,t-7}(Q)\arrow[dotted,ur] \arrow[equal,d]& \Ext_{E(1)}^{s-2,t-14}(Q)\arrow[ur,dotted]\arrow[equal,d]\\
	E_1^{s,t,0} & E_1^{s-1,t-7,1} & E_1^{s-2,t-14,2}
\end{tikzcd}
\]
This results in the Bockstein spectral sequence, which is of the form 
\[
E_1^{***} = \Ext_{E(1)}^{**}(Q)\otimes \F_2[v_2]\implies \Ext^{**}_{E(2)}(Q).
\]
This spectral sequence is trigraded, where
\[
E_1^{s,t,r} = \Ext_{E(1)}^{s,t}(Q)\{v_2^r\},
\]
and in this spectral sequence, $E_1^{s,t,r}$ converges to $\Ext_{E(2)}^{s+r,t+7r}(Q)$. The $d_k$-differential has trigrading
\[
d_k:E_k^{s,t,r}\to E_k^{s-k+1,t-7k,r+k}
\]
and $E_\infty^{s,t,r}$ converges to $\Ext_{E(2)}^{s+r,t+7r}(Q)$. So in Adams indexing, all the differentials look like $d_1$-differentials.

The utility of this spectral sequence is to prove that $\Ext_{E(2)}(Q)$ is $v_2$-torsion free. Towards this end, consider the short exact sequence
\[
0\to S'\to Q\to \overline{Q}\to 0
\]
where $S'$ is the $E(1)$-submodule of $Q$ generated by length 2 monomials (or rather images of monomials). 

\begin{prop}
	The submodule $S'$ has trivial $Q_0$ and $Q_1$-Margolis homology. Thus $S'$ is a free $E(1)$-submodule. 
\end{prop}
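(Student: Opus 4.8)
The plan is to run a streamlined version of the argument used above to prove that $S$ is free, exploiting the length bigrading on Margolis homology from Remark \ref{bigradingMargolis} together with the vanishing of the positive-length Margolis homology of $\tBP{2}$ from Corollary \ref{whereMargolisvanishes}. The key simplification is that, unlike $S$, the module $Q$ is concentrated in only the three lengths $0,1,2$, so there is no induction or termination argument to make.

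First I would transfer the length-graded Margolis homology from $\tBP{2}$ to $Q$. The point is that $0\to S\to \AE2\to Q\to 0$ is a short exact sequence of length-graded chain complexes for each of the differentials $Q_0$ and $Q_1$: each $Q_i$ lowers length by exactly one (Lemma \ref{Q_ilength}), $S$ is a length-homogeneous submodule, and both maps preserve length. Since $S$ has vanishing Margolis homology (proven above; equivalently, $S$ is free over $E(2)$, hence over $E(1)$), the associated long exact sequence gives $M_{*,\ell}(Q;Q_i)\cong M_{*,\ell}(\tBP{2};Q_i)$ for $i=0,1$ and every $\ell$, and Corollary \ref{whereMargolisvanishes} then makes this group vanish whenever $\ell\geq 1$.

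Next I would read off the consequences for the $E(1)$-module $Q$. Writing $Q^{(\ell)}$ for the length-$\ell$ summand of $Q$, one has $Q^{(\ell)}=0$ for $\ell\geq 3$ (since $S$ contains every monomial of length at least $3$), so for $i=0,1$ the $Q_i$-complex of $Q$ is just the three-term complex $Q^{(2)}\to Q^{(1)}\to Q^{(0)}$. Vanishing in length $2$ says $Q_i\colon Q^{(2)}\to Q^{(1)}$ is injective; vanishing in length $1$ says $\ker\!\big(Q_i\colon Q^{(1)}\to Q^{(0)}\big)=Q_iQ^{(2)}$. Since $E(1)$ has $\F_2$-basis $\{1,Q_0,Q_1,Q_0Q_1\}$ and $Q_i$ lowers length by one, the submodule $S'=E(1)\cdot Q^{(2)}$ is again length-homogeneous, with pieces $(S')^{(2)}=Q^{(2)}$, $(S')^{(1)}=Q_0Q^{(2)}+Q_1Q^{(2)}$, and $(S')^{(0)}=Q_0Q_1Q^{(2)}$. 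A short chase of the three-term $Q_0$-complex $(S')^{(2)}\to (S')^{(1)}\to (S')^{(0)}$ then forces all three homology groups to vanish: injectivity of $Q_0$ on $Q^{(2)}$ kills length $2$; in length $1$ the cycles are $(S')^{(1)}\cap\ker(Q_0|_{Q^{(1)}})=Q_0Q^{(2)}$, which are exactly the boundaries $Q_0(S')^{(2)}$; and since $Q_1Q^{(2)}\subseteq(S')^{(1)}$ we get $(S')^{(0)}=Q_0\big(Q_1Q^{(2)}\big)\subseteq Q_0(S')^{(1)}$, so length $0$ is all boundaries. The identical computation with $Q_0$ and $Q_1$ interchanged gives $M_*(S';Q_1)=0$. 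Since $S'$ sits inside $Q$, a quotient of the connective algebra $\AE2$, it is bounded below, and Theorem \ref{Margolis} then upgrades this vanishing to freeness of $S'$ over $E(1)$.

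The one step needing care is the transfer in the second paragraph: the long exact sequence must be taken in the category of length-graded complexes, and "$S$ has trivial Margolis homology" must be invoked in the strong form that each length component vanishes, not merely the total. Everything after that is bookkeeping with three-term complexes, which is why this is considerably cleaner than the proof that $S$ is free.
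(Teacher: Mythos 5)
Your proof is correct and is essentially the paper's argument repackaged: the paper performs the same case analysis on lengths $0,1,2$, using the identification of the Margolis homology of $Q$ with that of $\AE2$ (via the splitting off of the free module $S$), Corollary \ref{whereMargolisvanishes}, the vanishing of length-$3$ elements in $Q$, and the description of $S'$ as $E(1)$ applied to the length-$2$ part, only phrased element-wise rather than via graded three-term complexes. No substantive difference, and no gaps.
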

\begin{proof}
	Suppose that $x\in S'$ is such that $Q_ix=0$. So $x$ represents an element in $M_*(S',Q_i)$. We are tasked with showing that $x$ represents the zero element. If $\ell(x)=0$, then as $x\in S'$ there must be a $y\in Q$ with $\ell(y)=2$ such that $Q_0Q_1y=x$. So $x$ is zero in $M_*(S';Q_i)$. 
	
So suppose that $\ell(x)=1$. Since the Margolis homology of $Q$ is isomorphic to that of $\AE2$, Corollary \ref{whereMargolisvanishes} implies that there is a $y\in Q$ with $Q_iy=x$. Since $\ell(x)=1$, then $\ell(y)=2$, and so $y\in S'$. Thus showing that $x$ is zero in the Margolis homology group $M_*(S';Q_i)$. Finally, if $\ell(x)=2$ and $Q_ix=0$, then there is a $z\in Q$ with $Q_iz=x$. But as $Q$ was the quotient of $\AE{2}$ by $S$, any element of $Q$ of length 3 is necessarily zero. Thus, $x=0$ in $Q$. 

This shows that the Margolis homology groups of $S'$ are both zero, and so by Theorem \ref{Margolis}, the module $S'$ must be free over $E(1)$.
\end{proof}

\begin{cor}
	There is a splitting of $Q$ as an $E(1)$-module
	\[
	Q\simeq S'\oplus \overline{Q}
	\]
	and thus we get a splitting 
	\[
	\Ext_{E(1)_*}(Q)\simeq \Ext_{E(1)_*}(S')\oplus \Ext_{E(1)_*}(\overline{Q}).
	\]
\end{cor}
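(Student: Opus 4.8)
The plan is to deduce this formally from the freeness of $S'$ established in the previous proposition. Since $E(1)$ is a finite Hopf algebra over $\F_2$, the theorem of \cite{spectraMargolis} recalled above shows that the free $E(1)$-module $S'$ is in fact injective. Therefore the short exact sequence of $E(1)$-modules
\[
0\to S'\to Q\to \overline{Q}\to 0
\]
is split, and we obtain an isomorphism $Q\simeq S'\oplus \overline{Q}$ of $E(1)$-modules. Because $Q$ is a subquotient of the locally finite comodule $\AE2$, it is itself locally finite, so this module-level splitting corresponds to a splitting of $E(1)_*$-comodules (equivalently, one may dualize throughout, noting that $S'$ free means its dual is an injective $E(1)_*$-comodule).

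Applying $\Ext_{E(1)_*}(\F_2,-)$ to this direct sum decomposition and using additivity of $\Ext$ in the comodule variable then yields
\[
\Ext_{E(1)_*}(Q)\simeq \Ext_{E(1)_*}(S')\oplus \Ext_{E(1)_*}(\overline{Q}).
\]
It is worth noting, for later use, that since $S'$ is free one has $\Ext^s_{E(1)_*}(S')=0$ for $s>0$ while $\Ext^0_{E(1)_*}(S')$ is the span of the bottom classes of the free summands of $S'$; hence all of the positive-filtration part of $\Ext_{E(1)_*}(Q)$ is carried by the $\overline{Q}$ summand. There is no genuine obstacle in this corollary: the substantive input, namely the vanishing of the $Q_0$- and $Q_1$-Margolis homology of $S'$, was carried out in the preceding proposition, and what remains is routine homological algebra, the only point deserving a moment's care being the passage between $E(1)$-modules and $E(1)_*$-comodules, which is supplied by local finiteness.
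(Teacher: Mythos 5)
Your argument is correct and is exactly the one the paper intends (and used earlier for the splitting $\AE2\simeq S\oplus Q$): freeness of $S'$ plus the fact that free $=$ injective over the finite Hopf algebra $E(1)$ splits the short exact sequence $0\to S'\to Q\to \overline{Q}\to 0$, and Ext additivity gives the second isomorphism. The extra remarks on local finiteness and on $\Ext_{E(1)_*}(S')$ being concentrated in filtration $0$ are consistent with the paper's usage, so there is nothing to correct.
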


To prove that $\Ext_{E(2)_*}(Q)$ is $v_2$-torsion free, we will show that the $v_2$-BSS collapses at $E_1$. Since all the differentials look like $d_1$-differentials in Adams indexing, this will follow if we can prove $\Ext_{E(1)_*}(Q)$ is concentrated in even $(t-s)$-degree. 

\begin{rmk}
	Note that if $m$ is a monomial of length 0, then by the definitions, the degree of $m$ is even. Indeed, $m$ being length zero means
	\[
	m = \zeta_1^{2i_1}\zeta_2^{2i_2}\zeta_3^{2i_3}\zeta_4^{2i_4}\cdots 
	\]
	where the exponents are all of the form $2i_k$. So 
	\[
	|m| = \sum_k 2|\zeta_k|\equiv 0\mod 2.
	\]
\end{rmk}

\begin{cor}
	The groups $\Ext_{E(1)_*}(S')$ are concentrated in even $(t-s)$-degree.
\end{cor}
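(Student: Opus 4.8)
The plan is to combine the freeness of $S'$ as an $E(1)$-module, just established, with the length grading.

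Since $S'$ is a free, hence injective, $E(1)$-module, $\Ext^s_{E(1)_*}(\F_2,S')$ vanishes for $s>0$, so $\Ext_{E(1)_*}(S')$ is concentrated entirely in $\Ext^0_{E(1)_*}(\F_2,S')$. As this group sits in Adams filtration $s=0$, it suffices to show it is concentrated in even internal degree $t$. By the module--comodule identification used throughout this section, $\Ext^0_{E(1)_*}(\F_2,S')$ is the $E(1)$-socle of $S'$, i.e.\ the space of $x\in S'$ with $Q_0x=Q_1x=0$.

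I would then locate this socle via the length filtration. By Lemma \ref{Q_ilength} --- which passes to the quotient $Q$, since $S$ is spanned by length-homogeneous elements --- the operators $Q_0$ and $Q_1$ each strictly lower length by one. As $S'$ is generated over $E(1)$ by the images of the length-$2$ monomials, it follows that $S'$ is length-graded and concentrated in lengths $0,1,2$, with its length-$2$ part $S'^{(2)}$ spanned by those generators. A short computation identifies $(Q_0,Q_1)S'$ with the length-$\le 1$ part $S'^{(0)}\oplus S'^{(1)}$, whence $S'/(Q_0,Q_1)S'\cong S'^{(2)}$; so a set of homogeneous free generators of $S'$ may be chosen among the length-$2$ monomials. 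The socle of the free module $S'$ is then spanned by the elements $Q_0Q_1g$ with $g$ ranging over these generators, each of which has length $0$. Since a length-$0$ monomial of $\AE2$ has even internal degree (as noted just above; in fact the internal degree of any monomial in $\AE2$ is congruent to its length mod $2$), $\Ext^0_{E(1)_*}(\F_2,S')$ is concentrated in even $t$, hence in even $(t-s)$-degree.

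The only step that is more than bookkeeping is identifying $(Q_0,Q_1)S'$ with the length-$\le 1$ part of $S'$ --- equivalently, checking that the length-$2$ monomials are genuine free generators and not merely a generating set --- and this is exactly where the fact that $Q_0$ and $Q_1$ strictly decrease length (Lemma \ref{Q_ilength}) is used. The remaining ingredients (freeness forcing concentration in $\Ext^0$, primitives being the socle, length $0$ forcing even degree) are either recorded in the excerpt or immediate.
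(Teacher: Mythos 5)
Your proof is correct and follows essentially the same route as the paper: freeness of $S'$ forces $\Ext_{E(1)_*}(S')$ into filtration $0$, where it is the socle generated by $Q_0Q_1$ of the length-$2$ (hence even-degree) generators, matching the paper's observation that the Ext of a free module on a generator $x$ is one-dimensional in degree $|x|-4$. Your extra verification that the length-$2$ monomials may be taken as free generators is a careful spelling-out of a step the paper leaves implicit, not a different argument.
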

\begin{proof}
	Observe that if $x\in \AE2$ is a length 2 monomial, then $x=m\zeta_i\zeta_j$ for some monomial $m$ of length 0 and $i\neq j$. Observe that every length 0 monomial is in even degree. The degree of $\zeta_k$ is $2^k-1$. So the degree of $\zeta_i\zeta_j$ is 
	\[
	|\zeta_i\zeta_j| = 2^i-1+2^j-1 
	\]
which is even. Thus length 2 elements of $\AE2$ are concentrated in even degree, and this remains true when projecting to $Q$. If $x\in Q$ generates a free $E(1)$-module $M$, then the unique nonzero element in $\Ext_{E(1)_*}(M)$ lives in degree $|x|-4$. Thus if $x$ is in even degree, it determines an element in $\Ext^{0,*}_{E(1)_*}(S')$ in even degree. Combining all these observations shows that $\Ext_{E(1)_*}(S')$ is concentrated in even degree.
\end{proof}

We next wish to show that the Ext-groups for $\overline{Q}$ are in even $t-s$ degree. Once this is shown, we will be able to conclude that the $v_2$-BSS for $Q$ collapses at $E_1$. Towards this end, we will argue that $\overline{Q}$ decomposes into a direct sum of invertible $E(1)$-modules. Start by observing that the subalgebra
\[
R=P(\zeta_2^2, \zeta_3^2, \zeta_4, \ldots) \subseteq \AE{2}
\]
is an $E(1)_*$-subcomodule. Observe that all the monomials of $R$ have weight divisible by 4.

\begin{lem}
	As an $E(1)_*$-comodule algebra, $\AE{2}$ decomposes as the tensor product
	\[
	\AE{2}\cong_{E(1)_*}P(\zeta_1^2)\otimes R.
	\]
\end{lem}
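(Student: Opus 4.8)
The plan is to realize the asserted isomorphism as the multiplication map
\[
\mu\colon P(\zeta_1^2)\otimes R\longrightarrow \AE2,
\]
where $P(\zeta_1^2)$ is given the trivial $E(1)_*$-coaction, and to verify that $\mu$ is a map of $E(1)_*$-comodule algebras. That $\mu$ is an isomorphism of algebras is immediate: both sides are the polynomial algebra on the disjoint sets of generators $\{\zeta_1^2\}$ and $\{\zeta_2^2,\zeta_3^2,\zeta_4,\zeta_5,\dots\}$, and $\mu$ is the identity on generators, hence a bijection on the evident monomial bases. So the content lies entirely in the comodule structure.

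First I would check that $\zeta_1^2$ is an $E(1)_*$-comodule primitive. From \eqref{dualSteenrodCoprod} we have $\psi(\zeta_1)=1\otimes\zeta_1+\zeta_1\otimes 1$, hence $\psi(\zeta_1^2)=1\otimes\zeta_1^2+\zeta_1^2\otimes 1$, and since the projection $p_1\colon A_*\to E(1)_*=E(\zeta_1,\zeta_2)$ of \eqref{SES_E(n)} kills $\zeta_1^2$, we get $\alpha(\zeta_1^2)=1\otimes\zeta_1^2$. As $\alpha$ is an algebra map, each $\zeta_1^{2k}$ is primitive, so $P(\zeta_1^2)\subseteq\AE2$ is a trivial $E(1)_*$-subcomodule. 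Next, using that $p_1$ kills $\zeta_k$ for $k\geq 3$ as well as all squares, applying $p_1\otimes 1$ to the $A_*$-coproduct gives $\alpha(\zeta_j^2)=1\otimes\zeta_j^2$ for $j=2,3$ and
\[
\alpha(\zeta_n)=1\otimes\zeta_n+\zeta_1\otimes\zeta_{n-1}^2+\zeta_2\otimes\zeta_{n-2}^4\qquad(n\geq 4).
\]
Every right-hand tensor factor occurring here — namely $\zeta_n$, and $\zeta_{n-1}^2$ with $n-1\geq 3$, and $\zeta_{n-2}^4$ with $n-2\geq 2$ — lies in $R$, and, crucially, none of them reintroduces the generator $\zeta_1^2$. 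Hence $\alpha$ restricts to a coaction $R\to E(1)_*\otimes R$, re-confirming that $R$ is an $E(1)_*$-subcomodule (algebra).

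It then remains to check $\mu$ is a comodule map, i.e. $\alpha\circ\mu=(\mathrm{id}_{E(1)_*}\otimes\mu)\circ\alpha_{P\otimes R}$, where $\alpha_{P\otimes R}$ is the tensor-product coaction on $P(\zeta_1^2)\otimes R$. Both composites are algebra homomorphisms from the polynomial algebra $P(\zeta_1^2)\otimes R$ to the algebra $E(1)_*\otimes\AE2$ (the coactions $\alpha$ and $\alpha_{P\otimes R}$ are algebra maps because the comodule structures in sight are comodule-algebra structures, $\mu$ is an algebra map, and $\mathrm{id}\otimes\mu$ is an algebra map), so it suffices to compare them on the polynomial generators $\zeta_1^2\otimes 1$ and $1\otimes r$ with $r$ a generator of $R$. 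On $\zeta_1^2\otimes 1$ both sides equal $1\otimes\zeta_1^2$, using primitivity of $\zeta_1^2$ and triviality of the coaction on $P(\zeta_1^2)$; on $1\otimes r$ both sides equal $\alpha_R(r)=\alpha(r)$ by the previous paragraph. This proves the lemma.

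The only genuine input is the computation in the middle paragraph: one must see that projecting the $A_*$-coproduct of $\zeta_n$ ($n\geq 4$) into $E(1)_*\otimes\AE2$ yields error terms whose $\AE2$-factors land in $R$ and never bring back a factor of $\zeta_1^2$. Beyond this bookkeeping with the coproduct formula I do not anticipate any obstacle; once it is in hand, the identification of $\mu$ as an isomorphism of comodule algebras is formal.
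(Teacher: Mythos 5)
Your proposal is correct and takes the approach the paper itself intends: the paper states this lemma without proof, right after observing that $R$ is an $E(1)_*$-subcomodule, and your argument simply fills in that omitted bookkeeping (primitivity of $\zeta_1^2$, the check that the projected coproducts of $\zeta_n$ for $n\geq 4$ have right-hand factors in $R$ with no $\zeta_1^2$ reappearing, and the formal verification that the multiplication map $P(\zeta_1^2)\otimes R\to \AE{2}$ is an isomorphism of $E(1)_*$-comodule algebras). I see no gaps.
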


Moreover, since Margolis homology satisfies the K\"unneth formula, one obtains from Proposition \ref{MargolisBP2} the following.

\begin{lem}The Margolis homology of $R$ is given by
	\begin{enumerate}
		\item $M_*(R;Q_0)\cong P(\zeta_2^2)$, and 
		\item $M_*(R;Q_1)\cong E(\zeta_k^2\mid k\geq 2)$.
	\end{enumerate}
\end{lem}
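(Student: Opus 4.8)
The plan is to combine the previous lemma, which gives the $E(1)_*$-comodule algebra splitting $\AE{2}\cong_{E(1)_*} P(\zeta_1^2)\otimes R$, with the K\"unneth formula for Margolis homology. Since $\zeta_1^2$ is acted on trivially by both $Q_0$ and $Q_1$ (as recorded in the discussion of the module structure, $Q_i\zeta_1^2=0$ for $i=0,1,2$), the factor $P(\zeta_1^2)$ has Margolis homology equal to itself: $M_*(P(\zeta_1^2);Q_0)=M_*(P(\zeta_1^2);Q_1)=P(\zeta_1^2)$. Therefore the K\"unneth formula gives
\[
M_*(\tBP{2};Q_i)\cong M_*(P(\zeta_1^2);Q_i)\otimes M_*(R;Q_i)\cong P(\zeta_1^2)\otimes M_*(R;Q_i)
\]
for $i=0,1$, so that $M_*(R;Q_i)$ is obtained from Proposition \ref{MargolisBP2} simply by cancelling the $P(\zeta_1^2)$ factor.

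Concretely, for $i=0$ Proposition \ref{MargolisBP2} gives $M_*(\tBP{2};Q_0)=\F_2[\zeta_1^2,\zeta_2^2]=P(\zeta_1^2)\otimes P(\zeta_2^2)$, and cancelling $P(\zeta_1^2)$ leaves $M_*(R;Q_0)\cong P(\zeta_2^2)$, which is part (1). For $i=1$, Proposition \ref{MargolisBP2} gives $M_*(\tBP{2};Q_1)=\F_2[\zeta_1^2]\otimes E(\zeta_k^2\mid k\geq 2)=P(\zeta_1^2)\otimes E(\zeta_k^2\mid k\geq 2)$, and cancelling $P(\zeta_1^2)$ leaves $M_*(R;Q_1)\cong E(\zeta_k^2\mid k\geq 2)$, which is part (2). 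One could alternatively compute the Margolis homology of $R$ directly by writing $R$ as a tensor product of the small chain complexes $\F_2\{\zeta_2^{2j}\}$ and $\F_2\{\zeta_k\}\to\F_2\{\zeta_{k-1}^2\}$ for $k\geq 3$ appearing in the proof of Proposition \ref{MargolisBP2}, but it is cleaner to just deduce it by the cancellation argument above.

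There is essentially no obstacle here: the only point requiring a moment's care is checking that $P(\zeta_1^2)$ really does split off as a comodule algebra tensor factor with trivial $Q_0$- and $Q_1$-action — but this is exactly the content of the preceding lemma together with the formulas for the $E(2)$-action on $\AE{2}$ recorded earlier in the section. Given that, the result is an immediate consequence of the K\"unneth isomorphism for Margolis homology, as the lemma's own phrasing (``since Margolis homology satisfies the K\"unneth formula, one obtains from Proposition \ref{MargolisBP2} the following'') already indicates.
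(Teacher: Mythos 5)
Your proof is correct and is essentially the paper's own argument: the paper states this lemma without proof, presenting it as an immediate consequence of the K\"unneth formula for Margolis homology applied to the splitting $\AE{2}\cong_{E(1)_*}P(\zeta_1^2)\otimes R$ together with Proposition \ref{MargolisBP2}, which is exactly the cancellation you carry out (using that $Q_0$ and $Q_1$ act trivially on $P(\zeta_1^2)$). Your added care about why the $P(\zeta_1^2)$ factor may be cancelled, or the alternative direct tensor-decomposition of $R$, only makes explicit what the paper leaves implicit.
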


The weight filtration (reviewed in section 3 below) on $\AE{2}$ provides a weight filtration on $R$, which further induces a filtration on the Margolis homology. This provides a decomposition of $R$ given by weight (cf. Proposition \ref{decomp} below);
\[
R\cong_{E(1)_*} \bigoplus_{k\geq 0} W_2(k)
\]
where 
\[
W_2(k):= \mathrm{Span}\{m\in R\mid \wt(m)=4k\}.
\]
Then the Margolis homology groups of $W_2(k)$ are the weight $4k$ pieces of the Margolis homology of $R$. We obtain a corresponding decomposition of $\overline{Q}$, namely
\begin{equation}\label{eqn: decompQbar}
\overline{Q}\cong_{E(1)_*} P(\zeta_1^2)\otimes \overline{R}\cong_{E(1)_*}P(\zeta_1^2)\otimes \left(\bigoplus_{k=0}^\infty \overline{W}_2(k)\right)
\end{equation}
where $\overline{R}$ is the quotient of $R$ by the $E(1)$-submodule generated by monomials of length at least 2, and $\overline{W}_2(k)$ are the corresponding quotients of $W_2(k)$. Take note that the quotient map $R\to \overline{R}$ is an isomorphism in $Q_0$- and $Q_1$-Margolis homology. In particular, it follows from Lemma \ref{lem:when invertible} that $\overline{W}_2(k)$ are invertible $E(1)$-modules. Consequently, we derive

\begin{lem}
	As an $E(1)$-module, $\overline{Q}$ is a direct sum of invertible $E(1)$-modules.
\end{lem}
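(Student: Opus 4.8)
The plan is to read off the claim directly from the decomposition \eqref{eqn: decompQbar}, using the fact recorded just above that each $\overline{W}_2(k)$ is an invertible $E(1)$-module. The only extra ingredients are an analysis of the factor $P(\zeta_1^2)$ and the observation that the class of invertible modules (hence of direct sums of invertibles) is closed under suspension.

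First I would note that $P(\zeta_1^2)=\F_2[\zeta_1^2]$ carries the \emph{trivial} $E(1)$-module structure. Indeed, from the coaction formulas we have $\alpha(\zeta_1^2)=1\otimes\zeta_1^2$, equivalently $Q_i\zeta_1^2=0$ for $i=0,1,2$; since the $Q_i$ act by derivations, they annihilate every power of $\zeta_1^2$. As $\zeta_1^2$ has degree $2$, this yields an isomorphism of $E(1)$-modules $P(\zeta_1^2)\cong\bigoplus_{j\geq 0}\Sigma^{2j}\F_2$, with $\Sigma$ the invertible module of Theorem \ref{thm: picard group}. Combining with \eqref{eqn: decompQbar} gives an $E(1)$-module isomorphism
\[
\overline{Q}\cong_{E(1)}\bigoplus_{j\geq 0}\bigoplus_{k\geq 0}\Sigma^{2j}\,\overline{W}_2(k).
\]
It then remains to check that each summand $\Sigma^{2j}\overline{W}_2(k)$ is invertible. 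But $\overline{W}_2(k)$ is invertible (its $Q_0$- and $Q_1$-Margolis homology groups are one-dimensional, so Lemma \ref{lem:when invertible} applies), and $\Sigma$ is invertible; since $\Pic(E(1))$ is a group, $\Sigma^{\otimes 2j}\otimes\overline{W}_2(k)=\Sigma^{2j}\overline{W}_2(k)$ is again invertible. This displays $\overline{Q}$ as a direct sum of invertible $E(1)$-modules.

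There is no substantial obstacle here; the argument is essentially bookkeeping assembled from the previously established splittings. The one point deserving a moment's care is that $P(\zeta_1^2)$ splits off as a trivial $E(1)$-module \emph{on the nose} rather than merely stably, which is why I isolate the computation $Q_i\zeta_1^2=0$ and invoke the derivation property before passing to direct sums.
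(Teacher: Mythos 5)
Your proof is correct and follows essentially the same route as the paper: the paper likewise deduces the lemma from the decomposition \eqref{eqn: decompQbar}, the invertibility of the $\overline{W}_2(k)$, and the fact that $P(\zeta_1^2)$ is a direct sum of invertible modules. You merely make explicit the steps the paper leaves implicit (the trivial $E(1)$-action on $P(\zeta_1^2)$, distributing the tensor product over the direct sums, and closure of invertibles under suspension via $\Pic(E(1))$), which is fine.
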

\begin{proof}
	From Lemma \ref{lem:when invertible}, the $E(1)$-module $\overline{R}$ is a direct sum of invertible modules. As $P(\zeta_1^2)$ is clearly a direct sum of invertible modules, it follows that $\overline{Q}$ is as well.
\end{proof}

\begin{prop}
	The Ext-groups of $\overline{Q}$ are concentrated in even $(t-s)$-degree. 
\end{prop}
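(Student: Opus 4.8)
The plan is to split by homological degree, handling $\Ext^0_{E(1)_*}(\overline Q)$ directly from the monomial description of $\overline Q$ and the groups $\Ext^s_{E(1)_*}(\overline Q)$ for $s>0$ via the invertible $E(1)$-modules furnished by the preceding lemma together with Remark \ref{rmk: Adams covers}. The input shared by both parts is that $M_*(\overline Q;Q_0)$ and $M_*(\overline Q;Q_1)$ are concentrated in even internal degree: this follows from the decomposition \eqref{eqn: decompQbar}, the K\"unneth formula for Margolis homology, the fact that $R\to\overline R$ is an isomorphism on $Q_0$- and $Q_1$-Margolis homology, and the identities $M_*(R;Q_0)\cong P(\zeta_2^2)$ and $M_*(R;Q_1)\cong E(\zeta_k^2\mid k\geq 2)$ coming from Proposition \ref{MargolisBP2}, since $P(\zeta_1^2)$, $P(\zeta_2^2)$, and $E(\zeta_k^2\mid k\geq 2)$ all live in even degrees. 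I will also use the elementary fact that a length $\ell$ monomial of $\AE2$ has internal degree $\equiv\ell\pmod 2$, which holds because every $|\zeta_j|$ is odd.

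For $\Ext^0_{E(1)_*}(\overline Q)$, which is the module of primitives of $\overline Q$: since $\overline Q$ is spanned by the images of length $0$ and length $1$ monomials and each $Q_i$ lowers length by one, $\im(Q_i\colon\overline Q\to\overline Q)$ lies in the span of the length $0$ monomials. Given a homogeneous primitive, split it by length as $x=x_0+x_1$; the length $0$ part is automatically primitive, so $x_1$ is a primitive of length $1$, hence lies in $\ker Q_0$ but not in $\im Q_0$ and so, if nonzero, represents a nonzero class of $M_*(\overline Q;Q_0)$ in odd internal degree, contradicting the previous paragraph. Thus $x=x_0$ has even internal degree, and $\Ext^0_{E(1)_*}(\overline Q)$ lies in even $(t-s)$-degree.

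For $s>0$, use the preceding lemma to write $\overline Q$ as a direct sum of invertible $E(1)$-modules and fix a summand $P$. By Lemma \ref{lem:when invertible}, $M_*(P;Q_0)$ and $M_*(P;Q_1)$ are one-dimensional, concentrated in internal degrees $c$ and $d$, and these are even by the preceding paragraph (they are summands of the Margolis homology of $\overline Q$). By the remark following Theorem \ref{thm: picard group}, $P$ is stably equivalent to $\Sigma^aI^{\otimes b}$ with $a=(3c-d)/2$ and $b=(d-c)/2$, so $a+b=c$ is even. For $s>0$, stable equivalence and Remark \ref{rmk: Adams covers} then identify $\Ext^s_{E(1)_*}(P)$ with the $s$th piece of $\Sigma^a\,\Ext_{E(1)_*}(\F_2)^{\langle b\rangle}$; since $\Ext_{E(1)_*}(\F_2)\cong\F_2[v_0,v_1]$ with $|v_i|=(1,2^{i+1}-1)$ is concentrated in even $(t-s)$-degree, forming the $b$th Adams cover translates $(t-s)$ by $b$ and the suspension $\Sigma^a$ translates it by $a$, and $a+b$ is even. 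Summing over $s$ and over the invertible summands of $\overline Q$ gives the proposition.

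The step I expect to need the most care is the degree bookkeeping in the last paragraph: verifying exactly how the isomorphism of Remark \ref{rmk: Adams covers} and the suspension $\Sigma^a$ move classes in the $(s,t)$-plane, and checking that no contribution to $\Ext^0$ of $\Sigma^aI^{\otimes b}$ outside the $\langle b\rangle$ Adams cover has been overlooked --- which is exactly why $\Ext^0_{E(1)_*}(\overline Q)$ is dealt with separately and directly. Everything else is routine given the results already in hand.
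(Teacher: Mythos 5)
Your argument is correct, and its core is the same as the paper's: decompose $\overline{Q}$ into invertible $E(1)$-modules, note that the $Q_0$- and $Q_1$-Margolis homology of $\overline{Q}$ sits in even internal degree (via \eqref{eqn: decompQbar}, the K\"unneth formula, and $R\to\overline{R}$ being a Margolis isomorphism), and conclude via Theorem \ref{thm: picard group} and Remark \ref{rmk: Adams covers} that the positive-filtration Ext of each summand is an Adams cover shifted by $a+b=c$, which is even. Where you diverge is the $0$-line: the paper disposes of it by observing that a generator of a free $E(1)$-summand of $Q$ would have length $2$ and hence lie in $S'$, so $\overline{Q}$ has no free summands and its $\Ext^0$ carries no torsion outside the Adams-cover picture; you instead argue directly that a comodule primitive of odd internal degree would be a nonzero length-$1$ class in $\ker Q_0\setminus\im Q_0$, contradicting the evenness of $M_*(\overline{Q};Q_0)$. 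Both are valid; the paper's observation is shorter and also explains structurally why no $v_0,v_1$-torsion appears in $\Ext^0(\overline{Q})$, while your version makes the $(t-s)$ bookkeeping for $\Sigma^a I^{\otimes b}$ completely explicit and does not need the no-free-summands step. The only small caveat, which you share with the paper, is that Remark \ref{rmk: Adams covers} is stated for $b>0$; if some summand had $b\leq 0$ one would run the same short-exact-sequence argument in the other direction, and in any case the parity of the shift $a+b$ is unaffected, so the conclusion stands.
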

\begin{proof}
	Since $S'$ is free, the $Q_0$ and $Q_1$-Margolis homology groups of $Q$ and $\overline{Q}$ are isomorphic. Consequently, the Margolis homology groups of $\overline{Q}$ are concentrated in length 0, and so in even degree.
	
	Observe that for $x\in Q$ to generate a free $E(1)$-module, that the length of $x$ must be 2. Consequently, $x\in S'$. So $\overline{Q}$ has no free summands. From the previous lemma, $\overline{Q}$ is a direct sum of invertible modules. Since there are no free summands, the Ext-groups of these invertible $E(1)$-modules have no torsion in the 0-line. From Remark \ref{rmk: Adams covers}, it follows that the Ext-groups of these modules are Adams covers which are concentrated in even $(t-s)$-degree.

%
\end{proof}

\begin{cor}
The BSS for $Q$ collapses at $E_1$. Thus $\Ext_{E(2)_*}(Q)$ is $v_2$-torsion free.
\end{cor}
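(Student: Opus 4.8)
The plan is to obtain the collapse of the $v_2$-Bockstein spectral sequence from a parity argument, and then to read off $v_2$-torsion freeness as its formal consequence. Recall from the set-up above that, in Adams indexing, every differential in the BSS lowers the stem by exactly one and raises the Adams filtration by one; concretely a class of $E_1^{s,t,r}$ contributes to abutment-stem $t-s+6r$ and abutment-filtration $s+r$, while $d_k\colon E_k^{s,t,r}\to E_k^{s-k+1,t-7k,r+k}$ sends this to stem $t-s+6r-1$. So it suffices to show that the $E_1$-page is concentrated in even stems, which in turn reduces to showing that $\Ext_{E(1)_*}(Q)$ is concentrated in even $(t-s)$-degree.

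First I would assemble this evenness from the results just established. The $E(1)$-module splitting $Q\simeq S'\oplus\overline{Q}$ gives $\Ext_{E(1)_*}(Q)\cong \Ext_{E(1)_*}(S')\oplus\Ext_{E(1)_*}(\overline{Q})$, and each summand has already been shown to be concentrated in even $(t-s)$-degree: the $S'$-summand by the corollary above (a free $E(1)$-module on a generator $x$ contributes a single class, lying in the even degree $|x|-4$, and $S'$ is a sum of such on even-degree generators), and the $\overline{Q}$-summand by the preceding proposition (no free summands, so the Ext groups of its invertible summands are Adams covers, which are even). Tensoring with $\F_2[v_2]$ changes nothing, since $v_2$ has $(t-s)$-degree $6$. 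Thus the entire $E_1$-page $\Ext_{E(1)_*}(Q)\otimes\F_2[v_2]$ lies in even $(t-s)$-degree, hence in even stems of the abutment. Since $d_k$ moves the $\Ext_{E(1)_*}$-coordinate from $(s,t)$ to $(s-k+1,t-7k)$, it changes $(t-s)$ by the odd integer $-(6k+1)$, so its target lies in an odd stem where $E_1$ vanishes. Therefore $d_k=0$ for all $k\ge 1$ and the BSS collapses at $E_1$.

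Finally, $v_2$-torsion freeness is formal. Collapse at $E_1$ identifies $E_\infty$ with the free $\F_2[v_2]$-module $\Ext_{E(1)_*}(Q)\otimes\F_2[v_2]$; unwinding the exact couple, the vanishing of all differentials forces multiplication by $v_2$ on $\Ext_{E(2)_*}(Q)$ to be injective, equivalently no class of $\Ext_{E(2)_*}(Q)$ is annihilated by a power of $v_2$. (Alternatively one may cite the standard behaviour of $v_2$-Bockstein spectral sequences, as used in \cite{BOSS}, that a nonzero $d_k$ is precisely the obstruction presented by a class killed by $v_2^{k}$ but not by $v_2^{k-1}$.) I do not expect a serious obstacle here: all of the substantive input — freeness of $S$ and $S'$, invertibility of the $\overline{W}_2(k)$, and evenness of the relevant $\Ext$ groups — has already been proved, and the only points requiring care are the degree bookkeeping that makes the differential flip the parity of the internal coordinate, and the routine check that the BSS is normalized so that collapse yields torsion freeness of $\Ext_{E(2)_*}(Q)$ itself rather than merely of an associated graded.
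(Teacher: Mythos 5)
Your proposal is correct and follows the same route the paper takes: the paper's stated strategy is precisely that evenness of $\Ext_{E(1)_*}(Q)$ (via the $S'\oplus\overline{Q}$ splitting) forces all Bockstein differentials, which shift the stem by an odd amount, to vanish, whence collapse at $E_1$ and $v_2$-torsion freeness. Your degree bookkeeping ($v_2$ in stem $6$, $d_k$ changing $t-s$ by $-(6k+1)$) and the standard unwinding of the exact couple (using that $\bigcap_r \im(v_2^r)=0$ in each bidegree by connectivity) are exactly the implicit content of the paper's argument.
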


We can now prove Theorem \ref{mainSS2}.

\begin{proof}[Proof of Theorem \ref{mainSS2}]
	We have shown that there is a splitting of $E(2)$-modules
	\[
	\AE2\simeq S\oplus Q
	\]
	and so applying $\Ext_{E(2)}$ gives a decomposition
	\[
	\Ext_{E(2)_*}(\AE2)\simeq \Ext_{E(2)_*}(S)\oplus \Ext_{E(2)_*}(Q).
	\]
	We have shown that $S$ is free, and so $\Ext_{E(2)_*}(S)$ is concentrated in $\Ext^0$. We have also just shown that $\Ext_{E(2)_*}(Q)$ is $v_2$-torsion free. So we define
	\[
	V:= \Ext_{E(2)_*}(S),
	\]
	\[
	\euscr{C}:= \Ext_{E(2)_*}(Q).
	\]
	The previous two propositions show that $\euscr{C}$ is concentrated in even $(t-s)$-degree. This completes the proof of Theorem \ref{mainSS2}.
\end{proof}



\begin{rmk}
	Note that the $v_2$-BSS for $Q$ has many hidden extensions. 
\end{rmk}

\begin{rmk}
	A consequence of the discussion thus far is that $\Ext_{E(2)_*}(Q)$ is generated as a module over $\F_2[v_0,v_1,v_2]$ by elements in $\Ext_{E(2)_*}^{0,*}$.
\end{rmk}

\begin{rmk}
	One could attempt to generalize the above arguments to the spectra $\tBP{n}\wedge \tBP{m}$ when $m\geq n$. In this case, the homology of $\tBP{n}$ is $\AE{n}$. One would like to show that $\Ext_{E(n)_*}(\AE{m})$ splits into a $v_n$-torsion summand concentrated in Adams filtration 0, and a $v_n$-torsion free summand. Towards this end, one could define
	\[
	S(m,n)\subseteq \AE{m}
	\]
	to be the $E(n)$-submodule generated by monomials of length at least $n+1$, in analogy with $S$ above. If it could be shown that $S(m,n)$ is a free module over $E(n)$, then we would have a splitting of $E(n)$-modules
	\[
	\AE{m} = S(m,n)\oplus Q(m,n).
	\]
	An inductive argument with the $v_n$-BSS would then allow one to show that $\Ext_{E(n)_*}(Q(m,n))$ is $v_n$-torsion free. In the case $n=m=2$, $S(2,2)$ coincides with the submodule $S$ defined above. The problem is that the arguments presented hitherto to show that $M_*(S;Q_i)$ is trivial do not seem to generalize to other values of $m$ and $n$. 	
\end{rmk}

\subsection{Topological splitting}

In the previous subsection, we established a decomposition 
\[
\pi_*(\tBP{2}\wedge \tBP{2}) = V\oplus \euscr{C}
\]
where $V$ is the $\F_2$-vector space of $v_2$-torsion elements and $\euscr{C}$ is $v_2$-torsion free. In this section, we will establish that this splitting of homotopy groups in fact lifts to the stable homotopy category. That is, we will show that there is an equivalence of spectra
\[
\tBP{2}\wedge \tBP{2} \simeq HV\vee C
\]
with $HV$ the generalized Eilenberg-MacLane spectrum with $\pi_*(HV)=V$ and $\pi_*C = \euscr{C}$. 

Let $X$ denote $\tBP{2}\wedge \tBP{2}$. We will establish this spectrum level splitting by showing there is a map 
\[
X\to HV
\]
and defining $C$ to be the fibre. This will produce a fibre sequence
\[
C\to X\to HV.
\]
We will show that there is a section to the map $X\to HV$. 

In the previous section, we established a splitting
\[
(A\mmod E(2))_* = S\oplus Q
\]
with $S$ a free $E(2)$-module. Dualizing gives a decomposition
\begin{equation}\label{dualdecomp}
A\mmod E(2) = S^*\oplus Q^*
\end{equation}
and $S^*$ is free as an $E(2)$-module. 

In applying the change of rings theorem for an $A$-module $M$, one has to use the shearing isomorphism
\[
A\mmod E(2)\otimes_{\F_2}M\cong A\otimes_{E(2)}M,
\]
where the left hand side is endowed with the diagonal action. In the case of $H^*X$, we have the isomorphism
\[
A\mmod E(2)\otimes A\mmod E(2)\cong A\otimes_{E(2)}(A\mmod E(2)) .
\]
Coupled with the decomposition \ref{dualdecomp}, we see that as a module over $A$, the cohomology $H^*X$ decomposes as 
\[
H^*X\cong (A\otimes_{E(2)}S^*)\oplus (A\otimes_{E(2)}Q^*).
\]
As $S^*$ is free as an $E(2)$-module, the first factor is free as an $A$-module. Let us denote this free factor by $F$. Note that $H^*(HV)$ is precisely $F$. The idea is to show that the maps 
\[
F\to H^*X\to F
\]
in the splitting of $H^*X$ lift to maps of spectra via the Adams spectral sequence. 

Consider the Adams spectral sequence
\[
\Ext_{A}(F, H^*X)\implies [X, HV]_*.
\]
Since $F$ is free as an $A$-module, the $E_2$-page is concentrated in Adams filtration 0, and so it collapses at $E_2$. Note that 
\[
\Ext_A^0(F, H^*X) = \hom_{A}(F, H^*X),
\]
so the inclusion of $F$ into $H^*X$ determines a map of spectra
\[
X\to HV.
\]
For the map in the other direction, we shall use the Adams spectral sequence again,
\[
\Ext_{A}(H^*X, F)\implies [HV, X]_*.
\]
For this spectral sequence, we can apply the change-of-rings isomorphism on the $E_2$-term, 
\[
\Ext_A(H^*X, F)\simeq \Ext_{E(2)}(A\mmod E(2), F).
\]
By Theorem 4.4 in \cite{MilnorMoore}, $A$ is free over $E(2)$. Since $S^*$ is also locally finite, it follows that $F = A\otimes_{E(2)}S^*$ is locally finite. Thus $F$ is a locally finite free $E(2)$-module. If $\{b_\alpha\}$ is an $E(2)$-basis, then 
\[
F = \bigoplus_\alpha E(2)\{b_\alpha\} = \prod_\alpha E(2)\{b_\alpha\}
\]
since $F$ is locally finite. Thus 
\[
\Ext^{s,*}_{E(2)}(\AE{2}, F) \simeq \prod_\alpha \Ext^{s,*}_{E(2)}(\AE{2}, E(2)\{b_\alpha\}).
\]
Since $E(2)$ is self-injective, it follows that each component group on the right-hand side is zero when $s>0$. Thus the $\Ext^s$ groups are trivial for $s>0$. So the $E_2$-page of the ASS is concentrated in Adams filtration 0, and hence collapses. Therefore we have the desired map of spectra. Thus we get the section of the cofibre sequence
\[
C\to X\to HV
\]
and hence the desired splitting
\[
X\simeq C\vee HV.
\]

\begin{rmk}
	If we could prove the analogous splitting for 
	\[
	\Ext_{E(n)_*}(\AE{m})
	\] 
	when $m\geq n$, then the above argument could be used to show that $\tBP{n}\wedge \tBP{m}$ splits as a spectrum into an analogous wedge $C(n,m)\vee HV(n,m)$.
\end{rmk}


\section{Calculations}

In this section we develop techniques to provide an inductive calculation of 
\[
\Ext_{E(2)_*}(\AE{2}).
\] 
The first step in accomplishing this is to introduce a notion of \emph{weight} analogous to the one found in \cite{BOSS, bo-res}. This allows us to define \emph{Brown-Gitler sub-comodules} of $\AE{2}$. We will show that there is a decomposition
\[
\AE{2}\cong_{E(2)_*} \bigoplus_{j\geq 0} \Sigma^{2j}\buu_{\floor{j/2}}.
\]
Since the ASS for $\tBP{2}\wedge\tBP{2}$ collapses at $E_2$, it becomes apparent that we will need to compute the $\Ext_{E(2)_*}(-)$ groups of these $\buu$-Brown-Gitler comodules. The main purpose of this section is to develop an inductive method for computing the Ext groups of these comodules modulo torsion. 

The inductive method is accomplished in the following way. We will begin by producing exact sequences \eqref{odd_exact_sequence} and \eqref{even_exact_sequence} which relate $\buu_{j}$ to $\buu_k$ for strictly smaller values of $k$. These exact sequences are analogues of the exact sequences for $\bo$-Brown-Gitler spectra found in \cite{BHHM}. Furthermore, our inductive method is similar in spirit to the one found in \cite{BOSS}. These exact sequences produce spectral sequences converging to $\Ext_{E(2)_*}(\buu_j)$ with $E_1$-term given as a direct sum of $\Ext_{E(2)_*}(\buu_k)$ for certain $k<j$ and $\Ext_{E(1)_*}$-groups. These $\Ext_{E(1)}$ groups are readily computable, making the $E_1$-term extremely accessible via inductive calculation.

After producing the exact sequences, we analyze the induced spectral sequences in two steps. First, we provide names for the generators arising from the spectral sequence. Second, we resolve some hidden $v_2$-extensions.

\subsection{Brown-Gitler (co)modules}\label{subsection:3.1}

The majority of this and the following three sections are adaptations of the techniques found in \cite{BHHM} to our setting.

Recall that $E(n)$ denotes the sub-Hopf algebra of the Steenrod algebra which is generated by the first $n+1$ Milnor primitives, $Q_0, \ldots , Q_n$, and $E(n)_*$ denotes the dual of this algebra. This will be a quotient of the dual Steenrod algebra and it is given by 
\[
E(n)_* = E(\zeta_1, \ldots , \zeta_{n+1})
\] 
where the $\zeta_i$'s are the images of $\zeta_i$ in $A_*$. In $E(n)_*$, these elements are primitive, and so $E(n)$ is a self-dual Hopf algebra. 

Following \cite{BOSS} (cf. pg 7), we define a \emph{weight filtration} on $A_*$ which induces a filtration on the $A_*$-subcomodule 
\[
\AE{n} = A_*\boxempty_{E(n)_*}\F_2\cong \F_2[\zeta_1^2, ... ,\zeta_{n+1}^2, \zeta_{n+2}, \ldots ].
\]
We define the \emph{weight} of the generators $\zeta_k$ by 
\[
\wt(\zeta_k):= 2^{k-1}
\]
and extend multiplicatively by 
\[
\wt(xy) := \wt(x)+\wt(y).
\]
The \emph{Brown-Gitler comodule} $N_i(j)$ is the subspace of $\AE{i}$ spanned by elements of weight less than or equal to $2j$. From the coproduct formula for the dual Steenrod algebra:
\begin{equation}\label{coprod}
	\psi(\zeta_k) = \sum_{i+j=k}\zeta_i\otimes \zeta_j^{2^i},
\end{equation}
we see that $N_i(j)$ is an $A_*$-subcomodule of $\AE{i}$. The algebra $\AE{i}$ can also be regarded as a comodule over $E(i)_*$, in fact it is a comodule algebra. For consistency with the notation for Brown-Gitler spectra and their associated subcomodules (cf. \cite{BOSS}), we shall write 
\[
\tBPu{i}_j:= N_i(j).
\]
For $i=1$, we shall write
\[
\buu_j:= N_1(j),
\]
and for $i=0$ we write
\[
\HZu_j:= N_0(j)
\]

As in \cite{BHHM}, we can define a map of ungraded rings
\[
\varphi_i:\AE{i}\to \AE{i-1}
\]
which is defined on generators by 
\[
\varphi_i: \zeta_k^{2^{\ell}}\mapsto \begin{cases}
	\zeta_{k-1}^{2^\ell} & k>1 \\
	1 & k=1
\end{cases}
\]
and extended multiplicatively. So, for example, 
\[
\varphi_2(\zeta_4\zeta_5^2) = \zeta_3\zeta_4^2. 
\]

\begin{lem}
	The map $\varphi_i$ is a map of ungraded $E(i)_*$-comodules. 
\end{lem}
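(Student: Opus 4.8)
The plan is to make both $E(i)_*$-comodule structures explicit and then verify the required compatibility square on algebra generators. Recall that $\AE{i}$ and $\AE{i-1}$ both sit inside $A_*$ as left $A_*$-comodule subalgebras via the restriction of the coproduct $\psi$, and that each therefore carries an $E(i)_*$-comodule algebra structure obtained by post-composing with $p_i\otimes(-)$, where $p_i\colon A_*\to E(i)_*$ is the projection of \eqref{SES_E(n)}. Writing $\alpha$ and $\beta$ for the resulting $E(i)_*$-coactions on $\AE{i}$ and $\AE{i-1}$ — so that $\alpha(x)=(p_i\otimes 1)\psi(x)$ and $\beta(y)=(p_i\otimes 1)\psi(y)$ on their respective domains — the statement to prove is the identity $\beta\circ\varphi_i=(1\otimes\varphi_i)\circ\alpha$ of maps $\AE{i}\to E(i)_*\otimes\AE{i-1}$.

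First I would reduce to generators. Since $\varphi_i$ is by construction a map of (ungraded) rings, and $\psi$ and $p_i\otimes(-)$ are algebra maps, both $\beta\circ\varphi_i$ and $(1\otimes\varphi_i)\circ\alpha=(p_i\otimes\varphi_i)\circ\psi$ are algebra homomorphisms out of $\AE{i}$. Hence it suffices to check the identity on the polynomial generators $\zeta_1^2,\dots,\zeta_{i+1}^2,\zeta_{i+2},\zeta_{i+3},\dots$ of $\AE{i}$.

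On a square generator $\zeta_k^2$ with $1\le k\le i+1$: squaring \eqref{coprod} gives $\psi(\zeta_k^2)=\sum_{a+b=k}\zeta_a^2\otimes\zeta_b^{2^{a+1}}$, and since $p_i$ kills $\zeta_a^2$ for every $a\ge 1$, only the $a=0$ term survives, so $\alpha(\zeta_k^2)=1\otimes\zeta_k^2$; the same computation performed inside $\AE{i-1}$ gives $\beta(\zeta_{k-1}^2)=1\otimes\zeta_{k-1}^2$. Thus both sides of the identity equal $1\otimes\varphi_i(\zeta_k^2)$. On a generator $\zeta_n$ with $n\ge i+2$, the key observation is that $\varphi_i$ is an $A_*$-comodule map up to a $p_i$-negligible error: applying $1\otimes\varphi_i$ to $\psi(\zeta_n)=\sum_{a=0}^{n}\zeta_a\otimes\zeta_{n-a}^{2^a}$ reproduces, term by term for $0\le a\le n-1$, all the terms of $\psi(\zeta_{n-1})=\psi(\varphi_i\zeta_n)$ — the downward index shift built into $\varphi_i$ sends $\zeta_a\otimes\zeta_{n-a}^{2^a}$ to $\zeta_a\otimes\zeta_{n-1-a}^{2^a}$, and the boundary case $a=n-1$ lands on $\zeta_{n-1}\otimes 1$ on both sides — together with one extra summand $\zeta_n\otimes 1$ coming from the top $a=n$ term. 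Because $n\ge i+2$ forces $p_i(\zeta_n)=0$, this extra summand is annihilated by $p_i\otimes 1$, whence $(p_i\otimes\varphi_i)\psi(\zeta_n)=(p_i\otimes 1)\psi(\zeta_{n-1})=\beta(\varphi_i\zeta_n)$, which is exactly the desired equality on this generator.

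There is no genuine obstacle here; once the reduction to generators is in place the verification is a short computation. The points that require care are keeping straight that both the source and the target are being given the $E(i)_*$-comodule structure induced by the single projection $p_i$ (and not, say, a structure over $E(i-1)_*$), and bookkeeping the lowest-degree terms of $\psi(\zeta_n)$ — where powers of the form $\zeta_1^{2^a}$ occur and are sent to $1$ by $\varphi_i$ — against those of $\psi(\zeta_{n-1})$.
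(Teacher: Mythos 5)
Your proof is correct and follows essentially the same route as the paper: reduce to the multiplicative generators $\zeta_1^2,\dots,\zeta_{i+1}^2,\zeta_{i+2},\dots$ using that all maps involved are algebra maps, then verify compatibility on generators using the coproduct formula \eqref{coprod}, the exteriority of $E(i)_*$ (killing the squares), and $p_i(\zeta_n)=0$ for $n\geq i+2$ (killing the single extra term $\zeta_n\otimes 1$). Your write-up just makes explicit the generator-by-generator check that the paper leaves as "follows immediately."
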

\begin{proof}
	Since $\AE{i}$ is generated by $\{\zeta_1^2, \ldots, \zeta_{i+1}^2, \zeta_{i+2}, \ldots\}$, it is enough to check that $\varphi_i$ commutes with coaction on these generators. This follows immediately from the coproduct formula \eqref{coprod} and the fact that $E(i)_*$ is exterior. 
\end{proof}

Let $M_i(j)$ denote the subspace of $\AE{i}$ spanned by the monomials of weight exactly $2j$. Observe that the coaction on $\AE{i}$ (as a $E(i)_*$-comodule) preserves the weight. Thus the subspaces $M_i(j)$ are $E(i)_*$-subcomodules. In particular we have shown

\begin{prop}\label{decomp}
	The sum of the inclusion maps $M_i(j)\hookrightarrow \AE{i}$ produces a splitting of $E(i)_*$-comodules
	\[
	\AE{i}\cong \bigoplus_{j\geq 0} M_i(j).
	\]
\end{prop}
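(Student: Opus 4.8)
The plan is to verify directly that each subspace $M_i(j) \subseteq \AE{i}$ is an $E(i)_*$-subcomodule and then observe that these subspaces decompose $\AE{i}$ as a vector space. The key input is the weight grading, so first I would establish that the $E(i)_*$-coaction on $\AE{i}$ respects weight. Recall that the coaction $\alpha: \AE{i} \to E(i)_* \otimes \AE{i}$ is obtained from the $A_*$-coproduct \eqref{coprod} by applying the projection $p_i \otimes 1$ of the short exact sequence \eqref{SES_E(n)}. Since the weight is defined by $\wt(\zeta_k) = 2^{k-1}$ and extended multiplicatively, the terms $\zeta_i \otimes \zeta_j^{2^i}$ appearing in $\psi(\zeta_k)$ satisfy $\wt(\zeta_i) + \wt(\zeta_j^{2^i}) = 2^{i-1} + 2^i \cdot 2^{j-1} = 2^{i-1} + 2^{i+j-1} = 2^{k-1}$ when $i \geq 1$ (and the $i=0$ term contributes $\zeta_k$ itself, weight $2^{k-1}$), so each primitive generator maps to a sum of weight-homogeneous tensors of the correct total weight. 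One must extend this to all of $\AE{i}$: since $\alpha$ is a map of algebras and weight is multiplicative, if the generators have weight-homogeneous image then so does every monomial.

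Second, I would define the weight grading on the tensor product by declaring the weight of a simple tensor $a \otimes b$ in $E(i)_* \otimes \AE{i}$ to be $\wt(a) + \wt(b)$, and note from the computation above that $\alpha$ is a map of weight-graded vector spaces. Consequently $\alpha$ restricted to $M_i(j)$ lands in the weight-$2j$ part of $E(i)_* \otimes \AE{i}$. Here one needs that the only way to write $2j = \wt(a) + \wt(b)$ with $a \in E(i)_*$ and $b$ a monomial still forces $b$ to have even weight $2j'$ with $j' \leq j$; more precisely, since $a$ ranges over $E(\zeta_1, \ldots, \zeta_{i+1})$ and each $\zeta_k$ there has weight $2^{k-1}$, while the relevant identity from \eqref{coprod} pairs off the $\zeta_i$-factor with a correspondingly shifted factor in the right tensor slot, one checks that the image of $M_i(j)$ sits inside $E(i)_* \otimes M_i(j)$ — this is exactly the statement that $M_i(j)$ is a subcomodule. (This is the same reasoning already used in the excerpt to see that $N_i(j)$, the span of elements of weight $\leq 2j$, is a subcomodule.)

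Third, having established that each $M_i(j)$ is an $E(i)_*$-subcomodule, the decomposition is immediate: as a graded vector space $\AE{i} = \F_2[\zeta_1^2, \ldots, \zeta_{i+1}^2, \zeta_{i+2}, \ldots]$ splits as the direct sum of its weight-homogeneous pieces, and the weight of every monomial in these generators is even (since $\wt(\zeta_k^2) = 2 \cdot 2^{k-1} = 2^k$ and $\wt(\zeta_k) = 2^{k-1}$ for $k \geq i+2 \geq 2$), so $\AE{i} = \bigoplus_{j \geq 0} M_i(j)$ as vector spaces, and each summand is a subcomodule. Therefore the sum of the inclusions $\bigoplus_j M_i(j) \to \AE{i}$ is an isomorphism of $E(i)_*$-comodules.

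I expect the only genuine subtlety — really more a matter of careful bookkeeping than a true obstacle — to be the weight-homogeneity of the coaction: one must confirm that applying $p_i \otimes 1$ to $\psi(\zeta_k)$ does not break the weight balance, i.e. that the terms killed by $p_i$ (those involving $\br{\AE{i}}$ in the left factor) are discarded cleanly and the surviving terms $\zeta_\ell \otimes \zeta_{k-\ell}^{2^\ell}$ with $\ell \leq i+1$ each have total weight $2^{k-1}$. Once that identity is in hand for the generators, multiplicativity of both the coaction and the weight function propagates it to all of $\AE{i}$, and the rest is formal.
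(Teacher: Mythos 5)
Your strategy is the same as the paper's (the paper's entire argument is the observation that the $E(i)_*$-coaction preserves weight, so each weight-homogeneous piece $M_i(j)$ is a subcomodule and the vector-space decomposition is automatically a comodule splitting), but your verification of that key observation is incorrect as written. You claim that each term of $\psi(\zeta_k)$ has total weight $\wt(\zeta_\ell)+\wt(\zeta_{k-\ell}^{2^\ell})=2^{\ell-1}+2^{\ell}\cdot 2^{k-\ell-1}=2^{k-1}$; in fact $2^{\ell-1}+2^{k-1}\neq 2^{k-1}$ for $\ell\geq 1$, so the identity you rely on is false. Concretely, $\alpha(\zeta_4)$ contains the term $\zeta_1\otimes\zeta_3^2$, whose total weight is $1+8=9\neq 8=\wt(\zeta_4)$. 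Hence with your grading of $E(i)_*\otimes \AE{i}$ by total weight of a simple tensor, the coaction is \emph{not} a graded map, and the containment $\alpha(M_i(j))\subseteq E(i)_*\otimes M_i(j)$ does not follow from what you have written (your second and final paragraphs both repeat the false identity, and the phrase ``one checks'' is where the actual content should be).

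The fix is to measure weight only on the right-hand tensor factor (equivalently, assign weight $0$ to the $E(i)_*$-factor). After applying $p_i\otimes 1$, the terms of $\psi(\zeta_k)$ for $k\geq i+2$ that survive are $\zeta_\ell\otimes\zeta_{k-\ell}^{2^\ell}$ with $\ell\leq i+1$ (including $\ell=0$), and each right factor has weight $2^{\ell}\cdot 2^{k-\ell-1}=2^{k-1}=\wt(\zeta_k)$; the generators $\zeta_1^2,\dots,\zeta_{i+1}^2$ are comodule primitives. Since $\alpha$ is an algebra map and weight is multiplicative, every term of $\alpha(m)$ for a monomial $m$ has right factor of weight exactly $\wt(m)$, which is precisely the statement $\alpha(M_i(j))\subseteq E(i)_*\otimes M_i(j)$. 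With that corrected, the remainder of your argument (every monomial has even weight, so $\AE{i}=\bigoplus_{j\geq 0}M_i(j)$ as vector spaces and hence as $E(i)_*$-comodules) is fine and coincides with the paper's reasoning.
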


\begin{lem}
	For $i>0$, the map $\varphi_i$ maps the subspace $M_i(j)$ isomorphically onto $N_{i-1}(\floor{j/2})$.
\end{lem}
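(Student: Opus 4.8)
The plan is to show that $\varphi_i$ restricted to $M_i(j)$ is a bijection onto $N_{i-1}(\floor{j/2})$ by exhibiting explicit inverse behavior on monomial bases, tracking weights carefully. First I would recall that $M_i(j)$ has an $\F_2$-basis given by monomials $m = \zeta_1^{2a_1}\zeta_2^{2a_2}\cdots\zeta_{i+1}^{2a_{i+1}}\zeta_{i+2}^{b_{i+2}}\zeta_{i+3}^{b_{i+3}}\cdots$ (the first $i+1$ generators appearing only with even exponents, since $\AE{i}=\F_2[\zeta_1^2,\ldots,\zeta_{i+1}^2,\zeta_{i+2},\ldots]$) of total weight $\sum 2a_k\cdot 2^{k-1} + \sum b_k\cdot 2^{k-1} = 2j$. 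Applying $\varphi_i$ sends this to $\zeta_1^{2a_2}\zeta_2^{2a_3}\cdots\zeta_i^{2a_{i+1}}\zeta_{i+1}^{b_{i+2}}\zeta_{i+2}^{b_{i+3}}\cdots$, i.e.\ it deletes the $\zeta_1$-factor and shifts every index down by one. The key computation is the weight bookkeeping: if $m$ has weight $2j$ and its $\zeta_1$-exponent is $2a_1$, then $\varphi_i(m)$ has weight $\tfrac12(2j - 2a_1\cdot 2^0) = j - a_1$, because every surviving generator $\zeta_k^{e}$ (with $k\geq 2$) contributes $e\cdot 2^{k-1}$ to the weight of $m$ and $e\cdot 2^{k-2}$ to the weight of $\varphi_i(m)$, exactly halving. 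Thus $\wt(\varphi_i(m)) = j - a_1 \leq j$; but since $\varphi_i(m)\in\AE{i-1}$ has all weights even (wait — not quite, as $\zeta_i$ may appear with odd exponent $b_{i+1}$), the sharper bound is $\wt(\varphi_i(m))\le j$, hence $\varphi_i(m)\in N_{i-1}(\floor{j/2})$ once we check integrality. More carefully: $2j = 2a_1 + 2\cdot(\text{weight of }\varphi_i(m))$, so $\wt(\varphi_i(m)) = j - a_1$, which forces $j - a_1 \geq 0$ and in particular $\wt(\varphi_i(m)) \leq j$, and since this weight is a nonnegative integer $\leq j$ it is $\leq \floor{j/2}$ only if... —

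let me restate the right statement: every monomial of $M_i(j)$ with $j$ even maps into weight $\le j/2 = \floor{j/2}$; when $j$ is odd, $2j$ odd is impossible for weights of monomials whose first $i+1$ exponents are even unless $\zeta_{i+1}$ or higher carries odd exponent, but then a short parity check (the weight contributions of $\zeta_{\geq i+2}$ are $1, 2, 4,\ldots$ times their exponents) shows $\wt(\varphi_i(m)) \le (2j-1)/2$ hence $\le \floor{j/2}$. So the core content is: (a) $\varphi_i$ maps $M_i(j)$ into $N_{i-1}(\floor{j/2})$ by the weight-halving identity, and (b) it is a bijection.

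For injectivity and surjectivity I would argue directly: $\varphi_i$ is injective on the monomial basis of $M_i(j)$ because the preimage of a monomial $\zeta_1^{c_1}\zeta_2^{c_2}\cdots\in\AE{i-1}$ under the index-shift is forced — reindex up by one and restore a $\zeta_1$-power — with the $\zeta_1$-exponent $2a_1$ then uniquely determined by the requirement that the total weight be exactly $2j$, namely $2a_1 = 2j - 2\wt(\text{target monomial})$, provided this is a nonnegative even integer, which it is precisely when $\wt(\text{target})\le j$, i.e.\ for every element of $N_{i-1}(\floor{j/2})$. This simultaneously proves surjectivity: given any monomial $n\in N_{i-1}(\floor{j/2})$, set $a_1 := j - \wt(n) \geq 0$ and take $m := \zeta_1^{2a_1}\cdot(\text{index-shift-up of }n)$; then $m\in\AE{i}$ (all the shifted-up generators $\zeta_2,\ldots$ have the right parity of exponent since $n\in\AE{i-1}$ only constrains $\zeta_1^2,\ldots,\zeta_i^2$, and $\zeta_1^2,\ldots,\zeta_{i+1}^2$ is exactly what we need), $\wt(m) = 2a_1 + 2\wt(n) = 2j$ so $m\in M_i(j)$, and $\varphi_i(m) = n$. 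Since $\varphi_i$ is already known to be a map of $E(i)_*$-comodules by the preceding lemma (and the weight-graded pieces are subcomodules by Proposition \ref{decomp}), the $\F_2$-linear bijection is automatically a comodule isomorphism.

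The main obstacle I anticipate is the parity/integrality accounting at the boundary — making sure the claimed target really is $N_{i-1}(\floor{j/2})$ and not $N_{i-1}(j)$ or something with a shifted index — and in particular handling odd $j$ cleanly, since a monomial of $\AE{i}$ can have odd weight (via $\zeta_{i+1}$ appearing with an odd exponent), so "$2j$ is the weight of $m$" with $j$ odd is genuinely possible and one must verify the halved weight still lands at or below $\floor{j/2}$. Everything else is a routine unwinding of the definition of $\varphi_i$ on monomials together with the multiplicativity of weight; I would keep the exposition at the level of "$\varphi_i$ deletes $\zeta_1$ and shifts indices down, halving all weights of the surviving part, and the lost $\zeta_1$-weight is determined by the total." I do not expect to need any structural input beyond the explicit monomial bases and the weight identity $2\cdot\wt(\varphi_i(m)) + (\text{weight carried by }\zeta_1\text{ in }m) = \wt(m)$.
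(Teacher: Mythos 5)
Your overall route is the same as the paper's: work with the monomial basis, observe that $\varphi_i$ deletes the $\zeta_1$-power and halves the weight of what survives, and recover the deleted $\zeta_1$-exponent from the requirement that the total weight be $2j$. The injectivity/surjectivity half of your argument (the unique preimage $\zeta_1^{2a_1}\cdot(\text{shift-up of }n)$ with $a_1=j-\wt(n)\ge 0$) is correct. The genuine problem is in the other half, the verification that the image lands inside $N_{i-1}(\floor{j/2})$ --- which is precisely where the hypothesis $i>0$ has to enter --- and your treatment of it is wrong as written. The bounds you assert, ``maps into weight $\le j/2=\floor{j/2}$'' and ``$\wt(\varphi_i(m))\le (2j-1)/2$ hence $\le\floor{j/2}$'', are false: $\wt(\varphi_i(m))=j-a_1$ can be as large as $2\floor{j/2}$ (e.g.\ $m=\zeta_2^{2a}$ has weight $4a=2j$ and maps to $\zeta_1^{2a}$ of weight $2a=j$), and these claims conflate the Brown--Gitler index with the weight bound: by definition $N_{i-1}(k)$ is spanned by elements of weight $\le 2k$, not $\le k$. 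You also assert that a monomial of $\AE{i}$ can have odd weight via an odd exponent on $\zeta_{i+1}$; this cannot happen, since $\zeta_{i+1}$ occurs only squared in $\AE{i}$ and every generator $\zeta_1^2,\ldots,\zeta_{i+1}^2,\zeta_{i+2},\ldots$ has even weight.

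The fact you actually need for odd $j$ is that $\wt(\varphi_i(m))$ is \emph{even}, so that it cannot equal $j$ and the bound $\wt(\varphi_i(m))\le j$ improves to $\le 2\floor{j/2}$; without this, the image could a priori contain weight-$j$ elements lying outside $N_{i-1}(\floor{j/2})$, and the ``isomorphically onto $N_{i-1}(\floor{j/2})$'' statement would fail. Your ``parity check'' never establishes it. The paper's proof does so by noting that, since $i>0$, the exponent of $\zeta_2$ in $m$ is even, hence the weight of the non-$\zeta_1$ part is divisible by $4$ and $j-\ell$ is even, so the weight-$2j$ monomials with $\zeta_1$-exponent $2\ell$ map exactly onto $M_{i-1}\left(\frac{j-\ell}{2}\right)$, and letting $\ell$ vary gives exactly $N_{i-1}(\floor{j/2})$. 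Equivalently, you could simply observe that every generator of $\AE{i-1}$ has even weight (in particular $\wt(\zeta_{i+1})=2^i$ is even because $i\ge 1$), so $\wt(\varphi_i(m))$ is automatically even. Either one-line observation repairs your argument; as written, this step is a genuine gap.
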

\begin{proof}
	Given a monomial in $M_i(j)$, it can be written as $\zeta_1^{2\ell}x$ where $x$ is a monomial which is a product of $\zeta_k^{i}$ for $k\geq 2$. In this case, the weight of $x$ is $2j-2\ell$. Observe that 
	\[
	\varphi_i(\zeta_1^{2\ell}x) = \varphi_i(x)
	\]
	and the weight of $\varphi_i(x)$ is $j-\ell$. Write $x$ as 
	\[
	x = \zeta_2^{i_2}\zeta_3^{i_3}\zeta_4^{i_4}\cdots.
	\]
	Then the weight of $x$ is 
	\[
	\wt(x) = 2i_2+4i_3+8i_4 +\cdots + 2^{n-1}i_n+\cdots.
	\]
	Since $i>0$, it follows that $i_2$ is even, and hence $\wt(x)$ is divisible by 4. It follows that $2j-2\ell$ is divisible by 4, whence $j-\ell$ is divisible by 2. So $\varphi_i(x)$ belongs to $M_{i-1}\left(\frac{j-\ell}{2}\right)$. This shows that $\varphi_i$ maps the subspace spanned by monomials of the form $\zeta_1^{2\ell}x$ isomorphically onto the subspace $M_{i-1}\left(\frac{j-\ell}{2}\right)$. Letting $\ell$ vary, we see that the image of $\varphi_i$ restricted to $M_i(j)$ maps isomorphically onto $N_{i-1}(\floor{j/2})$.
\end{proof}

\begin{rmk}
	The inverse to the isomorphism in the previous lemma is given by 
	\[
	f_{i,j}: N_{i-1}(\floor{j/2})\to M_i(j);	\,\zeta_1^{i_1}\zeta_2^{i_2}\cdots \mapsto \zeta_1^a\zeta_2^{i_1}\zeta_3^{i_2}\cdots 
	\]
	where $a = 2j-\wt(\zeta_2^{i_1}\zeta_3^{i_2}\cdots)$.
\end{rmk}

\begin{cor}\label{MtoN}
	There is an isomorphism of graded $E(i)_*$-comodules
	\[
	M_i(j)\cong \Sigma^{2j}N_{i-1}(\floor{j/2}).
	\]
\end{cor}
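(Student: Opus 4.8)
The plan is to promote the ungraded $E(i)_*$-comodule isomorphism already established to a graded one, by checking that it shifts internal degree by exactly $2j$, uniformly over all monomials.

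First I would invoke the previous Lemma, which shows that $\varphi_i$ restricts to an isomorphism of ungraded $E(i)_*$-comodules $M_i(j)\xrightarrow{\ \sim\ }N_{i-1}(\floor{j/2})$, together with the Remark recording its explicit inverse $f_{i,j}$. Since the inverse of an isomorphism of comodules is again a morphism of comodules, $f_{i,j}$ is an isomorphism of ungraded $E(i)_*$-comodules in the reverse direction, and the only remaining task is to compare internal degrees.

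Next I would carry out the degree bookkeeping. For a monomial $y=\zeta_1^{i_1}\zeta_2^{i_2}\zeta_3^{i_3}\cdots\in N_{i-1}(\floor{j/2})$, using $\wt(\zeta_k)=2^{k-1}$ one has
\[
|y|=\sum_k i_k(2^k-1)=2\wt(y)-\sum_k i_k .
\]
By the Remark, $f_{i,j}(y)=\zeta_1^{a}\zeta_2^{i_1}\zeta_3^{i_2}\cdots$ with $a=2j-\wt(\zeta_2^{i_1}\zeta_3^{i_2}\cdots)=2j-2\wt(y)\ge 0$, the inequality because $\wt(y)\le 2\floor{j/2}\le j$. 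A direct count then gives
\[
|f_{i,j}(y)|=a+\sum_k i_k(2^{k+1}-1)=a+4\wt(y)-\sum_k i_k=2j+2\wt(y)-\sum_k i_k=2j+|y|,
\]
so $f_{i,j}$ raises internal degree by exactly $2j$ and hence defines an isomorphism of graded comodules $\Sigma^{2j}N_{i-1}(\floor{j/2})\xrightarrow{\ \sim\ }M_i(j)$, as claimed.

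The hard part is essentially nonexistent here: the whole content is the identity relating internal degree to weight. The one genuine subtlety is that $\varphi_i$ was constructed only as a map of \emph{ungraded} rings, so one cannot quote gradedness for free and must verify the uniform shift as above. An alternative, perhaps cleaner, bookkeeping runs directly with $\varphi_i$: for $m=\zeta_1^{2\ell}x\in M_i(j)$ with $x$ a product of $\zeta_k$'s with $k\ge 2$, one computes $|m|-|\varphi_i(m)|=2\ell+\wt(x)=\wt(m)=2j$, which makes the uniformity of the degree shift transparent.
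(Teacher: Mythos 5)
Your proof is correct and follows the same route the paper intends: the corollary is deduced from the preceding lemma and the remark giving the explicit inverse $f_{i,j}$, with the only content being the verification that the degree shift is uniformly $2j$, which your weight/degree bookkeeping (in either of your two versions) establishes correctly.
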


\begin{prop}
	There is an isomorphism of $E(i)_*$-comodules 
	\[
	M_i(2j)\cong \Sigma^2M_i(2j+1)
	\]
	which is given by multiplication by $\zeta_1^2$.
\end{prop}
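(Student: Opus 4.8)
The plan is to check directly that multiplication by $\zeta_1^{2}$ is the asserted isomorphism; essentially all the content is in verifying surjectivity. First I would record that the map is well defined with the right degrees. Since $\wt(\zeta_1^{2})=2$, multiplication by $\zeta_1^{2}$ carries a monomial of weight $4j$ to one of weight $4j+2$, hence sends $M_i(2j)$ into $M_i(2j+1)$; and since $|\zeta_1^{2}|=2$ it raises internal degree by $2$, which accounts for the suspension. It is a map of $E(i)_{*}$-comodules because $\AE{i}$ is a comodule algebra over $E(i)_{*}$ and $\zeta_1^{2}$ is a comodule primitive --- its coaction is $1\otimes\zeta_1^{2}$, since $p_i(\zeta_1^{2})=\big(p_i(\zeta_1)\big)^{2}=0$ in the exterior algebra $E(i)_{*}$ --- so multiplication by it commutes with the coaction.

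Injectivity is immediate: $\AE{i}$ is a polynomial algebra, hence an integral domain, so multiplying by the nonzero element $\zeta_1^{2}$ is injective. For surjectivity I would prove the elementary claim that \emph{every monomial of $\AE{i}$ of weight congruent to $2$ modulo $4$ is divisible by $\zeta_1^{2}$}. Indeed $\AE{i}=\F_2[\zeta_1^{2},\zeta_2^{2},\dots,\zeta_{i+1}^{2},\zeta_{i+2},\zeta_{i+3},\dots]$, and every polynomial generator other than $\zeta_1^{2}$ has weight divisible by $4$: one has $\wt(\zeta_k^{2})=2^{k}\geq 4$ for $2\leq k\leq i+1$, and $\wt(\zeta_k)=2^{k-1}\geq 4$ for $k\geq i+2$ (this is exactly where the hypothesis $i\geq 1$ is needed). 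Hence a monomial involving no $\zeta_1^{2}$ has weight $\equiv 0$ mod $4$; contrapositively, a monomial $m$ of weight $4j+2$ can be written $m=\zeta_1^{2}m'$ with $m'$ a monomial of weight $4j$, so $m'\in M_i(2j)$ and $m$ lies in the image. Thus multiplication by $\zeta_1^{2}$ sends the monomial basis of $M_i(2j)$ bijectively onto that of $M_i(2j+1)$.

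Combining these observations, multiplication by $\zeta_1^{2}$ is an isomorphism of graded $E(i)_{*}$-comodules from $M_i(2j)$ onto $M_i(2j+1)$ which raises internal degree by $2$, and this is the stated isomorphism. The only step with genuine content is the mod-$4$ weight count behind surjectivity; everything else is formal.
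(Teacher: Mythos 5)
Your argument is correct, but it takes a different (more self-contained) route than the paper. The paper deduces the proposition from the preceding remark: both $M_i(2j)$ and $M_i(2j+1)$ have already been identified with suspensions of the same Brown--Gitler comodule $N_{i-1}(j)$ via the inverse maps $f_{i,2j}$ and $f_{i,2j+1}$ of $\varphi_i$, and these two identifications differ only in the exponent $a$ of the $\zeta_1$-factor, which changes by exactly $2$; since $|\zeta_1^2|=2$, multiplication by $\zeta_1^2$ intertwines them and the claim follows in two lines. You instead verify directly that multiplication by $\zeta_1^2$ is a comodule isomorphism: it is a comodule map because $\zeta_1^2$ is a comodule primitive (indeed $p_i(\zeta_1^2)=p_i(\zeta_1)^2=0$ in the exterior algebra $E(i)_*$, matching the coaction formulas recorded in the paper) and $\AE{i}$ is a comodule algebra; it is injective because $\AE{i}$ is a polynomial ring over $\F_2$, hence a domain; and it is surjective because every polynomial generator of $\AE{i}$ other than $\zeta_1^2$ has weight divisible by $4$ (this is where $i>0$ enters), so any monomial of weight $4j+2$ factors as $\zeta_1^2$ times a monomial of weight $4j$. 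Your mod-$4$ weight count is essentially the same arithmetic that underlies the paper's lemma identifying $\varphi_i(M_i(j))$ with $N_{i-1}(\floor{j/2})$, so the combinatorial core is shared; what your proof buys is a complete stand-alone verification (including the comodule-map check and the explicit role of the hypothesis $i>0$), while the paper's buys brevity by reusing the isomorphisms $M_i(j)\cong\Sigma^{2j}N_{i-1}(\floor{j/2})$ it has already established.
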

\begin{proof}
	From the remark above, we have isomorphisms
	\[
	f_{i,2j}: \Sigma^{4j}N_{i-1}(j)\to M_i(2j)
	\]
	and
	\[
	f_{i,2j+1}:\Sigma^{4j+2}N_{i-1}(j)\to M_i(2j+1).
	\]
	The exponent $a$ for $\zeta_1$ in the preceding remark differ between $f_{i,2j}$ and $f_{i,2j+1}$ by a $2$. Since the degree of $\zeta_1^2$ is 2, the claim follows.
\end{proof}

In light of Corollary \ref{MtoN}, we will always make the identification
\[
M_2(2j)\cong \Sigma^{4j}N_1(j)
\]
in the rest of this paper.

\subsection{Exact sequences}

Inspired by \cite{BHHM}, we construct exact sequences relating the Brown-Gitler comodules $N_1(j)$ and $M_2(j)$. Recall that 
\[
\E2E1\cong E(\zeta_3).
\]
Consider the $\F_2$-linear map 
\[
\tau: \AE{1}\to \AE2\otimes \E2E1
\]
defined on the monomial basis by 
\[
\zeta_1^{2i_1}\zeta_2^{2i_2}\zeta_3^{2i_3+\epsilon}\zeta_4^{i_4}\cdots \mapsto \zeta_1^{2i_1}\zeta_2^{2i_2}\zeta_3^{2i_3}\zeta_4^{i_4}\cdots \otimes \zeta_3^\epsilon,
\]
where $i_j\geq 0$ for all $j$ and $\epsilon=0,1$. Note this is not a map of $E(2)_*$-comodules when the target is endowed with the diagonal coaction. For example, in $\AE{1}$, there is the coaction
\[
\alpha(\zeta_3) = 1\otimes \zeta_3+\zeta_1\otimes \zeta_2^2+\zeta_2\otimes \zeta_1^4+\zeta_3\otimes 1
\]
whereas in $\AE{2}\otimes \E2E1$, we have 
\[
\alpha(1\otimes \zeta_3) = 1\otimes 1\otimes \zeta_3+\zeta_3\otimes 1\otimes 1.
\]
However, we do have that $\tau$ is an isomorphism of $\F_2$-vector spaces. 

Following \cite{BHHM}, we will put a decreasing filtration on $\AE{1}$. Define 
\[
F^j\AE{1}:= \tau^{-1}\left(\left(\bigoplus_{k\geq j}M_2(k)\right)\otimes \E2E1\right)
\]
which gives the decreasing filtration
\[
\AE1 = F^0\AE1\supset F^1\AE1\supset F^2\AE1\supset \cdots .
\]

The following observation will be useful in later arguments.

\begin{lem}\label{weight_bounded_below}
	Let $x$ be a monomial in $\AE1$. If $x\in F^j(\AE1)$, then $\wt(x)$ is bounded below by $2j$. If the power of $\zeta_3$ in $x$ is odd, then its weight is bounded below by $2j+4$.
\end{lem}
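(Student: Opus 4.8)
The statement concerns the filtration $F^j(\AE1)$ defined via the bijection $\tau$, and it claims two lower bounds on the weight of a monomial $x \in F^j(\AE1)$. The natural strategy is to translate everything through $\tau$ and reduce to a weight comparison in $\AE2 \otimes \E2E1$. Write $x = \zeta_1^{2i_1}\zeta_2^{2i_2}\zeta_3^{2i_3+\epsilon}\zeta_4^{i_4}\cdots$ in the monomial basis of $\AE1$, so that $\tau(x) = \zeta_1^{2i_1}\zeta_2^{2i_2}\zeta_3^{2i_3}\zeta_4^{i_4}\cdots \otimes \zeta_3^\epsilon$. By definition, $x \in F^j(\AE1)$ exactly when the $\AE2$-component of $\tau(x)$, namely $m := \zeta_1^{2i_1}\zeta_2^{2i_2}\zeta_3^{2i_3}\zeta_4^{i_4}\cdots$, lies in $\bigoplus_{k\geq j} M_2(k)$, i.e. when $\wt(m) \geq 2j$ (using the weight in $\AE2$, where $\wt(\zeta_k) = 2^{k-1}$).

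\textbf{Key steps.} First I would compute $\wt(x)$ using the weight function on $\AE1$: since $\wt$ is defined by the same formula $\wt(\zeta_k) = 2^{k-1}$ on both $\AE1$ and $\AE2$ and is multiplicative, we get $\wt(x) = 2i_1 \cdot 1 + 2i_2 \cdot 2 + (2i_3+\epsilon)\cdot 4 + i_4 \cdot 8 + \cdots$ Meanwhile $\wt(m) = 2i_1 + 4i_2 + 8i_3 + 8 i_4 + \cdots$ — wait, I should be careful here with the indexing shift, since in $\tau(x)$ the exponent of $\zeta_3$ is $2i_3$ (weight $4 \cdot 2i_3 = 8 i_3$) while in $x$ the exponent is $2i_3 + \epsilon$. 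Comparing term by term, $\wt(x) - \wt(m) = 4\epsilon \geq 0$, so $\wt(x) \geq \wt(m) \geq 2j$, giving the first bound. For the second bound, suppose the power of $\zeta_3$ in $x$ is odd, i.e. $\epsilon = 1$. Then $\wt(x) = \wt(m) + 4$. I still need $\wt(m) \geq 2j$, which holds as before, so $\wt(x) \geq 2j + 4$.

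\textbf{Main obstacle.} The only real subtlety is bookkeeping: making sure the weight of $m$ (the $\AE2$-factor of $\tau(x)$) is computed with the correct exponents, and checking that $\tau$ genuinely identifies $F^j(\AE1)$ with $\tau^{-1}$ of $\left(\bigoplus_{k \geq j} M_2(k)\right) \otimes \E2E1$ so that membership in $F^j$ is precisely the condition $\wt(m) \geq 2j$ — this is just unwinding the definition. There is no hard analytic or structural content; the argument is a direct weight count, and I expect the entire proof to be three or four lines once the notation is set up. The one thing to double-check is that the $\zeta_3^\epsilon$ tensor factor contributes nothing to the weight constraint defining $F^j$ (it lives in $\E2E1$, not in the $M_2(k)$ summand), which is exactly why the odd-$\zeta_3$ case forces the extra $+4$.
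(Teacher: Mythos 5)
Your argument is correct: unwinding the definition of $\tau$ and of $F^j\AE{1}$, a monomial $x$ with $\zeta_3$-exponent $2i_3+\epsilon$ satisfies $\wt(x)=\wt(m)+4\epsilon$ where $m$ is the $\AE{2}$-factor of $\tau(x)$, and membership in $F^j$ forces $\wt(m)\geq 2j$, giving both bounds. This is exactly the direct weight count the paper intends (it states the lemma as an observation without a written proof), so there is nothing to add.
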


The coproduct formula \eqref{coprod} shows that this is a filtration by $E(2)_*$-subcomodules. Passing to filtration quotients gives a map on the associated graded comodule algebra
\begin{equation}\label{AssocGraded}
	E^0\tau: E^0\AE1\to E^0(\AE2\otimes \E2E1)
\end{equation}
which we will show is a map of $E(2)_*$-comodules. Here, the filtration on the target of $\tau$ is given by 
\[
D^j:= \left(\bigoplus_{k\geq j}M_2(k)\right)\otimes\E2E1 
\]
so that $F^j= \tau^{-1}D^j$. However, from Proposition \ref{decomp}, it follows that 
\[
E^0(\AE{2}\otimes \E2E1) \cong_{E(2)_*}\AE{2}\otimes \E2E1,
\]
and so $E^0\tau$ is a map
\[
E^0\tau: E^0\AE{1}\to \AE{2}\otimes \E2E1
\]

Here is an example illustrating why $E^0\tau$ is a map of comodules over $E(2)_*$. 

\begin{ex}\label{ex:coaction_assoc_graded}
	In the coaction, $\alpha(\zeta_3)$, of $\zeta_3$ above, the terms which prevented the map $\tau$ from being a comodule map were $\zeta_1\otimes \zeta_2^2$ and $\zeta_2\otimes \zeta_1^4$. Note that $\zeta_3\in F^0$, whereas $\zeta_2^2$ and $\zeta_1^4$ are both in $F^2$. Thus, in $E^0\AE{1}$, we have 
	\[
	E^0\alpha(\zeta_3) = 1\otimes \zeta_3+\zeta_3\otimes 1,
	\]
	and so $\zeta_3$ is primitive in $E^0\AE{1}$.
	\end{ex}
In general, the coproduct formula shows that the coaction of an element $x$ of $\AE1$ is the same as the coaction on $\tau(x)$ modulo elements of higher filtration. We will now show that $E^0\tau$ is an isomorphism of comodule algebras. 

It is clear that the target of $E^0\tau$ is a comodule algebra over $E(2)_*$. It needs to be shown that $E^0\AE{1}$ carries an algebra structure. 

\begin{lem}
	The filtration $F^j\AE{1}$ of $\AE{1}$ is multiplicative, i.e. for all $j,k\in \mathbb{N}$, 
	\[
	F^j\AE{1}\cdot F^k\AE{1}\subseteq F^{j+k}\AE{1}.
	\]
	Consequently, the associated graded $E^0\AE{1}$ is an algebra.
\end{lem}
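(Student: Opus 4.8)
The plan is to reduce the statement to a claim about weights of monomials, using the filtration-theoretic characterization of $F^j\AE1$ together with Lemma \ref{weight_bounded_below}. First I would unwind the definition: an element $x\in F^j\AE1$ means, by definition, that $\tau(x)$ lies in $\left(\bigoplus_{k\geq j}M_2(k)\right)\otimes \E2E1$. Since $\tau$ is multiplicative on the monomial basis up to the bookkeeping of the $\zeta_3$-exponent (it just ``peels off'' one power of $\zeta_3$ into the tensor factor), and since the weight grading on $\AE2$ makes $\bigoplus_{k\geq j}M_2(k)$ into an ideal-like object with respect to weight (the product of a monomial of weight $\geq 2j$ with one of weight $\geq 2k$ has weight $\geq 2(j+k)$), the multiplicativity of the filtration should follow from the fact that $\wt$ is additive under multiplication.

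The cleanest route is probably to avoid reasoning through $\tau$ directly and instead use Lemma \ref{weight_bounded_below}, which already translates membership in $F^j\AE1$ into a lower bound on weight (with the refinement that an odd power of $\zeta_3$ forces weight $\geq 2j+4$). So the key steps would be: (1) take monomials $x\in F^j\AE1$ and $y\in F^k\AE1$; it suffices to prove multiplicativity on monomials since both sides are spanned by monomials and $F^\bullet$ is defined via a monomial basis; (2) split into cases according to the parity of the $\zeta_3$-exponents of $x$, $y$, and $xy$. In the generic case both $\zeta_3$-exponents are even (or we don't care), and then $\wt(x)\geq 2j$, $\wt(y)\geq 2k$ give $\wt(xy)=\wt(x)+\wt(y)\geq 2(j+k)$, and since the $\zeta_3$-exponent of $xy$ is even we conclude $xy\in F^{j+k}\AE1$ by the converse direction of Lemma \ref{weight_bounded_below} (i.e. that a monomial of even $\zeta_3$-exponent and weight $\geq 2m$ lies in $F^m$). (3) The delicate case is when exactly one of $x,y$ has an odd power of $\zeta_3$, so that $xy$ also has an odd power of $\zeta_3$; say $x$ does. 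Then Lemma \ref{weight_bounded_below} gives $\wt(x)\geq 2j+4$ and $\wt(y)\geq 2k$, so $\wt(xy)\geq 2(j+k)+4 = 2(j+k)+4$, which is exactly the bound needed to place a monomial with odd $\zeta_3$-exponent into $F^{j+k}\AE1$. (4) If both $x$ and $y$ have odd $\zeta_3$-exponents, then $xy$ has even $\zeta_3$-exponent, and $\wt(xy)\geq (2j+4)+(2k+4) \geq 2(j+k)$, which suffices for the even case.

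I expect the main obstacle to be making precise and justifying the ``converse'' half of Lemma \ref{weight_bounded_below} --- that is, that the weight bounds are not merely necessary but sufficient for membership in $F^j\AE1$ --- since Lemma \ref{weight_bounded_below} as quoted only asserts the necessary direction. The resolution is that $F^j\AE1 = \tau^{-1}\big(D^j\big)$ where $D^j = \big(\bigoplus_{k\geq j}M_2(k)\big)\otimes\E2E1$, and $\tau$ sends a monomial $m$ with $\zeta_3$-exponent $2i_3+\epsilon$ to (a monomial of weight $\wt(m) - 4\epsilon$ in $\AE2$) $\otimes\, \zeta_3^\epsilon$; hence $m\in F^j\AE1$ if and only if $\wt(m)-4\epsilon\geq 2j$, which is exactly the case distinction above. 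So really the argument is a direct computation with $\tau$ and the additivity of $\wt$, and Lemma \ref{weight_bounded_below} is just the convenient packaging. The only thing to be careful about is that $\tau$ is only $\F_2$-linear and not multiplicative on the nose --- but it is ``multiplicative up to reassembling powers of $\zeta_3$,'' which is all that is used, since $\wt$ ignores the split of $\zeta_3$-exponents into the two tensor factors except for the $4\epsilon$ correction, and that correction behaves additively in exactly the way tracked in steps (2)--(4). The final sentence of the lemma, that $E^0\AE1$ is then an algebra, is immediate from multiplicativity of the filtration.
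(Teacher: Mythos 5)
Your proof is correct and follows essentially the same route as the paper: both arguments reduce to the observation that a monomial lies in $F^j\AE{1}$ exactly when its $\AE{2}$-part (equivalently, its weight minus the $4\epsilon$ correction for an odd $\zeta_3$-exponent) has weight at least $2j$, and then invoke additivity of the weight under multiplication. Your parity case analysis, including the case where both factors carry an odd power of $\zeta_3$, is just a more explicit rendering of the paper's factorization $x=m\zeta_3^{\epsilon}$ with $m\in\AE{2}$.
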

\begin{proof}
	Let $x$ be a monomial of $\AE{1}$. Then $x$ can be uniquely expressed as a product $m\zeta_3^\epsilon$ where $m$ is a monomial in $\AE{2}$ and $\epsilon=0,1$. The monomial $x$ is in $F^j\AE{1}$ if and only if $\wt(m)$ is at least $2j$. Let $y$ be a monomial of $\AE{1}$ with $y=m'\zeta_3^{\epsilon'}$ for $m'$ a monomial of $\AE{2}$ and $\epsilon'=0,1$. Suppose that $x\in F^j\AE{1}$ and $y\in F^k\AE{1}$. Then 
	\[
	xy = mm'\zeta_3^{\epsilon+\epsilon'}.
	\]
	Since $\wt(mm') \geq 2(j+k)$, this shows $xy\in F^{j+k}\AE{1}$.
\end{proof}

\begin{rmk}
	The map $\tau$ is \emph{not} a map of algebras since $\zeta_3^2\neq 0$ but $(1\otimes \zeta_3)^2=0$. We will argue, however, that $E^0\tau$ is a map of algebras. To illustrate this, note that $\zeta_3\in F^0\AE{1}$ but $\zeta_3\notin F^1\AE{1}$. On the other hand, $\zeta_3^2\in F^4\AE{1}$. So in $E^0\AE{1}$, the class determined by $\zeta_3$ squares to 0 in the associated graded. 
	
	Confusingly, the class $\zeta_3^2$ determines a nonzero class in the fourth associated graded piece. For clarity, we denote this class by $\widetilde{\zeta_3^2}$.
\end{rmk}

\begin{lem}
	The associated graded map $E^0\tau$ is an isomorphism of algebras. 
\end{lem}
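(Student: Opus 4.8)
The plan is short: $E^0\tau$ is a vector-space isomorphism essentially for free (because $\tau$ is a filtration-defining bijection), so the only thing to prove is multiplicativity, and that I would verify directly on the monomial basis; the one interesting case is the product $\zeta_3\cdot\zeta_3$.

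First I would record that since $\tau$ is an $\F_2$-linear bijection and $F^j\AE1$ was \emph{defined} to be $\tau^{-1}(D^j)$, we have $\tau(F^j\AE1)=D^j$ for every $j$; hence $\tau$ induces bijections $F^j\AE1/F^{j+1}\AE1\xrightarrow{\sim}D^j/D^{j+1}$, and summing over $j$ shows $E^0\tau$ is a bijection. Next I would identify the target \emph{as an algebra}: by Proposition~\ref{decomp} applied to $\AE2\cong\bigoplus_k M_2(k)$ and tensored with $\E2E1$, the associated graded $\bigoplus_j D^j/D^{j+1}$ is $\bigl(\bigoplus_k M_2(k)\bigr)\otimes\E2E1=\AE2\otimes\E2E1$, and since $M_2(j)\cdot M_2(k)\subseteq M_2(j+k)$ this identification respects products. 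So it remains only to check that $E^0\tau$ is a ring homomorphism.

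For that I would work with monomials. Writing a monomial of $\AE1$ uniquely as $m\zeta_3^{\epsilon}$ with $m$ a monomial of $\AE2$ and $\epsilon\in\{0,1\}$, recall from the proof of the previous lemma that it lies in filtration exactly $\tfrac12\wt(m)$ (every $\AE2$-monomial has even weight, since every generator $\zeta_1^2,\zeta_2^2,\zeta_3^2,\zeta_4,\zeta_5,\dots$ does), and that $\tau$ sends it to $m\otimes\zeta_3^{\epsilon}\in M_2(\tfrac12\wt(m))\otimes\E2E1$. Given two such monomials $x=m\zeta_3^{\epsilon}$, $y=m'\zeta_3^{\epsilon'}$ of filtrations $a$ and $b$, I split on $\epsilon+\epsilon'$. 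If $\epsilon+\epsilon'\le 1$, then $xy=mm'\zeta_3^{\epsilon+\epsilon'}$ has filtration $a+b$, and both $E^0\tau(\bar x\,\bar y)$ and $E^0\tau(\bar x)\cdot E^0\tau(\bar y)$ equal $mm'\otimes\zeta_3^{\epsilon+\epsilon'}$. If $\epsilon=\epsilon'=1$, then $E^0\tau(\bar x)\cdot E^0\tau(\bar y)=mm'\otimes\zeta_3^{2}=0$ in $\AE2\otimes\E2E1$, while $xy=(mm'\zeta_3^{2})\zeta_3^{0}$ has filtration $a+b+4$ (its $\zeta_3$-free part $mm'\zeta_3^2$ has weight $2(a+b)+8$), so $\bar x\,\bar y=0$ in $F^{a+b}\AE1/F^{a+b+1}\AE1$ and $E^0\tau(\bar x\,\bar y)=0$ as well. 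In either case the two sides agree, so $E^0\tau$ is multiplicative and, being bijective, an isomorphism of algebras.

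The only delicate case is the last one, and it is exactly the content of the preceding remark: $\bar\zeta_3^{\,2}=0$ in $E^0\AE1$ because the monomial $\zeta_3^2$ that would detect the square sits four filtrations higher, where it contributes the independent class $\widetilde{\zeta_3^2}$ matching $\zeta_3^{2}\otimes 1$. I do not anticipate a genuine obstacle — the whole argument is bookkeeping with the weight filtration — but one must be careful precisely at this filtration jump.
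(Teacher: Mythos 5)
Your proof is correct and follows essentially the same route as the paper: verify multiplicativity on the monomial basis via $m\zeta_3^{\epsilon}\mapsto m\otimes\zeta_3^{\epsilon}$, with the only nontrivial case $\epsilon=\epsilon'=1$ handled by the filtration jump of $\zeta_3^2$ (the point made in the paper's remark preceding the lemma). Your write-up simply makes explicit the weight bookkeeping that the paper leaves implicit.
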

\begin{proof}
	Since $E^0\tau$ is necessarily an isomorphism of $\F_2$-vector spaces, it is enough to show that $E^0\tau$ is an algebra homomorphism. Let $x, y\in E^0\AE{1}$ be represented by  
	\[
	x = m\zeta_3^{\epsilon}
	\]
	and
	\[
	y = m'\zeta_3^{\epsilon'}
	\]
	where $m, m'$ are monomials in $\AE{2}$ and $\epsilon, \epsilon'=0,1$. In the monomials $m, m'$, occurrences of $\zeta_3^2$ are understood to be the class $\widetilde{\zeta_3^2}$. These are mapped under $E^0\tau$ by 
	\[
	x\mapsto m\otimes \zeta_3^\epsilon
	\]
	and
	\[
	y\mapsto m'\otimes \zeta_3^{\epsilon'}
	\]
	respectively. From this it follows that $E^0\tau$ is an algebra map. 
\end{proof}

Consequently, $E^0\AE{1}$ is a comodule algebra. We will now show that $E^0\tau$ is a comodule map.


\begin{prop}
	The map \eqref{AssocGraded} is an isomorphism of $E(2)_*$-comodule algebras. 
\end{prop}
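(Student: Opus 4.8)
The plan is to leverage the fact, established just above, that $E^0\tau$ is already an isomorphism of $\F_2$-algebras. Both the diagonal $E(2)_*$-coaction on $\AE{2}\otimes\E2E1$ (which is a comodule algebra, being a tensor product of comodule algebras) and the associated-graded coaction $E^0\alpha$ on $E^0\AE{1}$ are maps of algebras — the latter because $\alpha$ is a filtered algebra homomorphism and the filtration $F^\bullet\AE{1}$ is multiplicative. Therefore $\alpha_{\mathrm{diag}}\circ E^0\tau$ and $(1\otimes E^0\tau)\circ E^0\alpha$ are both $\F_2$-algebra homomorphisms $E^0\AE{1}\to E(2)_*\otimes\AE{2}\otimes\E2E1$, and to see that they coincide it suffices to compare them on a set of algebra generators of $E^0\AE{1}$. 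As a commutative algebra, $E^0\AE{1}\cong\F_2[\zeta_1^2,\zeta_2^2,\widetilde{\zeta_3^2},\zeta_4,\zeta_5,\dots]\otimes E(\zeta_3)$; note that $\zeta_3^2$ supplies the genuinely \emph{new} generator $\widetilde{\zeta_3^2}$, since the square of the class of $\zeta_3$ vanishes in the associated graded. So the generators to be checked are the classes of $\zeta_1^2,\zeta_2^2,\widetilde{\zeta_3^2},\zeta_3,\zeta_4,\zeta_5,\dots$.

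First I would dispose of the even-power generators $\zeta_1^2,\zeta_2^2,\widetilde{\zeta_3^2}$. Projecting the coproduct \eqref{coprod} to $E(2)_*\otimes\AE{1}$ kills every summand whose left tensor factor is a positive square or involves a $\zeta_i$ with $i\geq 4$, so $\alpha(\zeta_i^2)$ reduces to $1\otimes\zeta_i^2$ for $i=1,2,3$; hence each of $\zeta_1^2,\zeta_2^2,\widetilde{\zeta_3^2}$ is primitive in $E^0\AE{1}$, and likewise $\zeta_1^2\otimes 1,\zeta_2^2\otimes 1,\zeta_3^2\otimes 1$ are primitive in $\AE{2}\otimes\E2E1$ (using $\alpha(\zeta_n^2)=1\otimes\zeta_n^2$ for $n\leq 3$). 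Since $E^0\tau$ carries these classes to one another, it intertwines the coactions on them. The generator $\zeta_3$ is exactly Example \ref{ex:coaction_assoc_graded}: the cross terms $\zeta_1\otimes\zeta_2^2$ and $\zeta_2\otimes\zeta_1^4$ in $\alpha(\zeta_3)$ have right-hand factors in $F^2\AE{1}$ while $\zeta_3\in F^0\AE{1}\setminus F^1\AE{1}$, so they die in $E^0$ and $E^0\alpha(\zeta_3)=1\otimes\zeta_3+\zeta_3\otimes 1$, which corresponds under $1\otimes E^0\tau$ to $\alpha(1\otimes\zeta_3)=1\otimes(1\otimes\zeta_3)+\zeta_3\otimes(1\otimes 1)$.

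The remaining generators $\zeta_k$ with $k\geq 4$ go the same way, and here the weight bookkeeping is the crux. Projecting \eqref{coprod} to $E(2)_*$ leaves exactly
\[
\alpha_{\AE{1}}(\zeta_k)=1\otimes\zeta_k+\zeta_1\otimes\zeta_{k-1}^2+\zeta_2\otimes\zeta_{k-2}^4+\zeta_3\otimes\zeta_{k-3}^8.
\]
Now $\zeta_k$, $\zeta_{k-1}^2$, $\zeta_{k-2}^4$, and $\zeta_{k-3}^8$ all have weight $2^{k-1}$ and none involves an odd power of $\zeta_3$, so by the definition of the filtration each of the four right-hand factors lies in $F^{2^{k-2}}\AE{1}\setminus F^{2^{k-2}+1}\AE{1}$, exactly the filtration of $\zeta_k$ itself. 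Thus no term is dropped on passing to $E^0$ and $E^0\alpha(\zeta_k)=1\otimes\zeta_k+\zeta_1\otimes[\zeta_{k-1}^2]+\zeta_2\otimes[\zeta_{k-2}^4]+\zeta_3\otimes[\zeta_{k-3}^8]$, where $[\zeta_3^2]=\widetilde{\zeta_3^2}$ when $k=4$. On the other side $E^0\tau(\zeta_k)=\zeta_k\otimes 1$, and the $E(2)_*$-coaction on $\AE{2}$ gives $\alpha(\zeta_k\otimes 1)=1\otimes(\zeta_k\otimes 1)+\zeta_1\otimes(\zeta_{k-1}^2\otimes 1)+\zeta_2\otimes(\zeta_{k-2}^4\otimes 1)+\zeta_3\otimes(\zeta_{k-3}^8\otimes 1)$, which is precisely $(1\otimes E^0\tau)$ applied to $E^0\alpha(\zeta_k)$, using $E^0\tau(\widetilde{\zeta_3^2})=\zeta_3^2\otimes 1$. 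This checks all generators and proves the proposition.

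The hard part is really just the bookkeeping — tracking, generator by generator, which summands of the coproduct formula survive to the associated graded. What makes it go through is that the terms obstructing $\tau$ from being a comodule map either vanish already after projection to $E(2)_*$ (the even-power generators), or are forced into strictly higher filtration (the generator $\zeta_3$), or never occur in the first place while the surviving terms sit automatically in the correct filtration (the generators $\zeta_k$, $k\geq 4$); so $E^0$ never introduces a genuine discrepancy.
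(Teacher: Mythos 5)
Your proof is correct and follows essentially the same route as the paper: both use the previously established fact that $E^0\tau$ is an algebra isomorphism to reduce the comodule check to the algebra generators $\zeta_1^2,\zeta_2^2,\zeta_3,\widetilde{\zeta_3^2},\zeta_4,\zeta_5,\dots$, and then verify the coaction case by case exactly as in the paper's proof (primitivity of the even-power generators, Example \ref{ex:coaction_assoc_graded} for $\zeta_3$, and the four-term coaction for $\zeta_k$, $k\geq 4$). The only difference is that you spell out the weight/filtration bookkeeping showing no coaction term of $\zeta_k$, $k\geq 4$, drops in the associated graded, a point the paper dismisses with ``clearly.''
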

\begin{proof}
By the previous lemma, we know that $E^0\AE{1}$ is a comodule algebra and that $E^0\tau$ is an algebra map. Since $\tau$ was an isomorphism of $\F_2$-vector spaces, so is $E^0\tau$. Thus, in order to show that $E^0\tau$ is a comodule map it is enough to check it commutes with the coaction on the generating set $\{\zeta_1^2, \zeta_2^2, \zeta_3, \widetilde{\zeta_3}^2, \zeta_4, \zeta_5, \ldots\}$. The classes $\zeta_1^2, \zeta_2^2$ are comodule primitives in the source as well as in the target, so the coaction commutes with $E^0\tau$ in this case. The coaction in $\AE{1}$ on the classes $\zeta_n$ for $n\geq 4$ produce the following coactions in the associated graded,
\[
E^0\alpha(\zeta_n) = 1\otimes \zeta_n+\zeta_1\otimes \zeta_{n-1}^2+\zeta_2\otimes \zeta_{n-2}^4+\zeta_3\otimes \zeta_{n-3}^8.
\] 
Clearly $E^0\tau$ commutes with the coaction in this case as well. This leaves the classes $\zeta_3$ and $\widetilde{\zeta_3^2}$. 

As shown in Example \ref{ex:coaction_assoc_graded}, the coaction of $\zeta_3$ in $E^0\AE{1}$ is 
\[
E^0\alpha(\zeta_3) = 1\otimes \zeta_3+\zeta_3\otimes 1
\]
which is seen to be mapped under $E^0\tau$ to 
\[
1\otimes 1\otimes \zeta_3+\zeta_3\otimes 1\otimes 1,
\]
as desired. It also easily seen that 
\[
E^0\alpha(\widetilde{\zeta_3^2}) = 1\otimes \widetilde{\zeta_3^2}
\]
which is seen to map to 
\[
1\otimes \zeta_3^2\otimes 1
\]
under $E^0\tau$, as desired. 
\end{proof}

Define quotients
\[
Q^j\AE1:= \AE1/F^{j+1}\AE1,
\]
then this is an $E(2)_*$-comodule and it inherits a filtration from $\AE1$. The map $\tau$ induces an isomorphism of $\F_2$-vector spaces
\[
\tau: Q^j\AE1\to \tBPu{2}_j\otimes \E2E1
\]
which induces an isomorphism of associated graded $E(2)_*$-comodules,
\[
\tau: E^0Q^j\AE1\to \tBPu{2}_j\otimes \E2E1.
\]

\begin{lem}
	There is a short exact sequence of $E(2)_*$-comodules
	\begin{equation}\label{odd_exact_sequence}
	0\to \Sigma^{4j}\buu_j\otimes \buu_1\to \buu_{2j+1}\to Q^{2j-1}\AE1 \to 0
	\end{equation}
\end{lem}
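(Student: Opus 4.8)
\emph{Proof proposal.} The plan is to obtain \eqref{odd_exact_sequence} by restricting the tautological surjection $q\colon\AE1\twoheadrightarrow Q^{2j-1}\AE1=\AE1/F^{2j}\AE1$ to the Brown--Gitler subcomodule $\buu_{2j+1}=N_1(2j+1)\subseteq\AE1$. Since the weight filtration $F^{\bullet}\AE1$ is by $E(2)_*$-subcomodules, the restriction $\beta\colon\buu_{2j+1}\to Q^{2j-1}\AE1$ is a map of $E(2)_*$-comodules, so the sequence we are after is simply $0\to\ker\beta\to\buu_{2j+1}\xrightarrow{\beta}\im\beta\to 0$. Thus everything reduces to showing $\beta$ is surjective and identifying $\ker\beta$ with $\Sigma^{4j}\buu_j\otimes\buu_1$; both are weight computations on the monomial basis, organized through the $\F_2$-linear bijection $\tau$.

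First I would dispose of surjectivity. Under $\tau$ the target $Q^{2j-1}\AE1$ is identified as a vector space with $\tBPu{2}_{2j-1}\otimes\E2E1$, so a basis of it consists of classes represented by monomials $m\zeta_3^{\epsilon}$ with $m$ a monomial of $\AE2$ of weight at most $4j-2$ and $\epsilon\in\{0,1\}$. Such a monomial has weight $\wt(m)+4\epsilon\le 4j+2$ in $\AE1$ (since $\wt(\zeta_3)=4$), hence lies in $N_1(2j+1)=\buu_{2j+1}$, so $\beta$ hits every basis element.

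For the kernel, I would write each monomial of $\AE1$ uniquely as $m\zeta_3^{\epsilon}$ with $m$ a monomial of $\AE2$ and $\epsilon\in\{0,1\}$. Straight from the definition of the filtration, $m\zeta_3^{\epsilon}\in F^{2j}\AE1$ iff $\wt(m)\ge 4j$, whereas $m\zeta_3^{\epsilon}\in\buu_{2j+1}$ iff $\wt(m)+4\epsilon\le 4j+2$; the two conditions are incompatible when $\epsilon=1$, and for $\epsilon=0$ they force $\wt(m)\in\{4j,4j+2\}$. Hence, as a subspace of $\AE2\subseteq\AE1$ (an inclusion of $E(2)_*$-comodules, immediate from the coproduct formula), $\ker\beta=M_2(2j)\oplus M_2(2j+1)$, a sum of $E(2)_*$-subcomodules by Proposition~\ref{decomp}. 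Now Corollary~\ref{MtoN} gives $M_2(2j)\cong\Sigma^{4j}N_1(j)=\Sigma^{4j}\buu_j$ and $M_2(2j+1)\cong\Sigma^{4j+2}N_1(j)=\Sigma^{4j+2}\buu_j$, while $\buu_1=N_1(1)$ is the trivial comodule $\F_2\oplus\Sigma^2\F_2$ because $\zeta_1^2$ is an $E(2)_*$-comodule primitive. Therefore $\ker\beta\cong\Sigma^{4j}\buu_j\oplus\Sigma^{4j+2}\buu_j\cong\Sigma^{4j}\buu_j\otimes\buu_1$, which produces \eqref{odd_exact_sequence}.

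I do not anticipate a serious obstacle: once $\beta$ is recognized as the restriction of a quotient map the exactness is essentially forced, and the substance is the weight bookkeeping above. The one point requiring a little care is confirming that the subspace computed as $\ker\beta$ inside $\AE1$ genuinely carries the comodule structure of $M_2(2j)\oplus M_2(2j+1)$ coming from $\AE2$ — that is, that $\AE2$ sits in $\AE1$ as an $E(2)_*$-subcomodule — and then cleanly matching the internal direct sum $\Sigma^{4j}\buu_j\oplus\Sigma^{4j+2}\buu_j$ with the tensor product $\Sigma^{4j}\buu_j\otimes\buu_1$.
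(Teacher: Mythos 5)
Your proposal is correct and follows essentially the same route as the paper: restrict the quotient map $\AE1\to Q^{2j-1}\AE1$ to $\buu_{2j+1}$, check surjectivity and compute the kernel by weight bookkeeping on the monomial basis, and identify the kernel via Corollary~\ref{MtoN}. The only cosmetic difference is that you describe the kernel as $M_2(2j)\oplus M_2(2j+1)$ and then invoke the triviality of $\buu_1$, whereas the paper identifies it directly as $M_2(2j)\otimes\buu_1$; these amount to the same thing.
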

\begin{proof}
	Observe there is a commutative diagram
	\[
	\begin{tikzcd}[column sep = small]
		\tBPu{2}_{2j-1}\otimes E(\zeta_3)\arrow[rrd] \arrow[hook,r] & \AE2\otimes E(\zeta_3)\arrow[r,"\tau^{-1}"] & \AE1 \\
		 & & \buu_{2j+1}\arrow[hook,u]
	\end{tikzcd}.
	\]
	This factorization arises because if $x\in \tBPu{2}_{2j-1}$, then the image of $x\otimes \zeta_3^{\epsilon}$ in $\AE1$ has weight bounded by
	\[
	\wt(\tau^{-1}(x\otimes \zeta_3^\epsilon))\leq 4j-2+4 = 4j+2,
	\]
	and hence must lie in $\buu_{2j+1}$. The projection of $\AE1$ onto $Q^{2j-1}\AE1$ restricts to give a surjection
	\[
	\rho:\buu_{2j+1}\twoheadrightarrow Q^{2j-1}\AE1.
	\]
	Note that the kernel of $\rho$ is the intersection of $F^{2j}\AE1$ with $\buu_{2j+1}$. So let $x$ be a nonzero element of this intersection. Note that, since both $\buu_{2j+1}$ and $F^{2j}$ are defined in terms of the monomial basis of $\AE{1}$, it follows that their intersection is as well. Thus, we may suppose that $x$ is in fact a monomial, say it is the monomial $\zeta_1^{2i_1}\zeta_2^{2i_2}\zeta_3^{2i_3+\epsilon}\zeta_4^{i_4}\cdots$.  Since $x$ is an element of $\buu_{2j+1}$, its weight is bounded above by $4j+2$. Since $x\in F^{2j}\AE{1}$, Lemma \ref{weight_bounded_below} implies that $\wt(x)$ is at least $4j$. Lemma \ref{weight_bounded_below} also implies that if $\epsilon=1$, then the weight of $x$ is bounded below by $4j+4$, which is a contradiction. Thus $\epsilon=0$ and the weight of $x$ is $4j$ or $4j+2$. All of this implies that the kernel of $\rho$ is 
	\[
	\ker\rho = M_2(2j)\otimes \buu_1
	\]
	where $\buu_1$ is the subcomodule $\F_2\{1,\zeta_1^2\}$ of $\AE{1}$. By \ref{MtoN}, we get the desired short exact sequence.
\end{proof} 

\begin{lem}\label{lem:even-exact-seq}
	There is an exact sequence of $E(2)_*$-comodules
	\begin{equation}\label{even_exact_sequence}
		0\to \Sigma^{4j}\buu_j\to \buu_{2j}\to Q^{2j-1}\AE{1}\to \Sigma^{4j+5}\buu_{j-1}\to 0	
	\end{equation}
\end{lem}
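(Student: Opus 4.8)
The plan is to realize the map $\buu_{2j}\to Q^{2j-1}\AE{1}$ as the restriction to the subcomodule $\buu_{2j}\subseteq\buu_{2j+1}$ of the surjection $\rho\colon\buu_{2j+1}\twoheadrightarrow Q^{2j-1}\AE{1}$ from \eqref{odd_exact_sequence}, and then to compute its kernel and cokernel directly on the monomial basis. This is feasible because $\buu_{2j}$, $\buu_{2j+1}$, the filtration pieces $F^\bullet\AE{1}$, and the weight spaces $M_2(k)$ are all $E(2)_*$-subcomodules spanned by monomials, and for any $E(2)_*$-comodule map $f$ the sequence $0\to\ker f\to\buu_{2j}\xrightarrow{f}Q^{2j-1}\AE{1}\to\coker f\to 0$ is automatically exact; so the task reduces to identifying $\ker(\rho|_{\buu_{2j}})$ and $\coker(\rho|_{\buu_{2j}})$ as comodules.

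First I would handle the kernel. By \eqref{odd_exact_sequence}, $\ker\rho=M_2(2j)\otimes\buu_1=M_2(2j)\oplus\zeta_1^2M_2(2j)$ with $\buu_1=\F_2\{1,\zeta_1^2\}$, and the two summands are concentrated in weights $4j$ and $4j+2$. Since $\buu_{2j}$ consists of monomials of weight at most $4j$, intersecting with it kills the second summand, so $\ker(\rho|_{\buu_{2j}})=M_2(2j)\cong\Sigma^{4j}N_1(j)=\Sigma^{4j}\buu_j$ by Corollary \ref{MtoN}; this is the first map of \eqref{even_exact_sequence}.

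For the cokernel I would first use \eqref{odd_exact_sequence} to identify $Q^{2j-1}\AE{1}$ with $\buu_{2j+1}/(M_2(2j)\otimes\buu_1)$, so that, using $M_2(2j)\subseteq\buu_{2j}$, one gets $\coker(\rho|_{\buu_{2j}})\cong\buu_{2j+1}/(\buu_{2j}+\zeta_1^2M_2(2j))$. A quick count shows that every monomial of $\AE2$ of weight $\equiv2\pmod4$ is divisible by $\zeta_1^2$, hence $\zeta_1^2M_2(2j)=M_2(2j+1)$; and since all weights in $\AE{1}$ are even, $\buu_{2j+1}/\buu_{2j}\cong M_1(2j+1)$, which splits as $M_2(2j+1)\oplus\zeta_3M_2(2j-1)$ according to the parity of the exponent of $\zeta_3$. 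Hence $\coker(\rho|_{\buu_{2j}})$ has a monomial basis given by the classes $[m\zeta_3]$ with $m$ running over the monomial basis of $M_2(2j-1)$, so as a graded $\F_2$-vector space it is $\Sigma^7M_2(2j-1)\cong\Sigma^{4j+5}N_1(j-1)=\Sigma^{4j+5}\buu_{j-1}$.

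The step that I expect to need real care, and would do last, is upgrading this last identification to an isomorphism of $E(2)_*$-comodules, i.e. showing $m\mapsto[m\zeta_3]$ is a comodule map $\Sigma^7M_2(2j-1)\to\coker(\rho|_{\buu_{2j}})$. For a monomial $m\in M_2(2j-1)$, write the $E(2)_*$-coaction on $\AE2$ as $\alpha(m)=\sum m'\otimes m''$; as this coaction preserves the weight of the $\AE2$-factor, each $m''$ has weight $4j-2$. From \eqref{coprod} one has $\alpha(\zeta_3)=1\otimes\zeta_3+\zeta_1\otimes\zeta_2^2+\zeta_2\otimes\zeta_1^4+\zeta_3\otimes1$ in $\AE{1}$, so by multiplicativity $\alpha(m\zeta_3)=\sum m'\otimes m''\zeta_3+\sum m'\zeta_1\otimes m''\zeta_2^2+\sum m'\zeta_2\otimes m''\zeta_1^4+\sum m'\zeta_3\otimes m''$. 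Passing to $\buu_{2j+1}/(\buu_{2j}+M_2(2j+1))$, the second and third sums vanish since $m''\zeta_2^2,m''\zeta_1^4\in M_2(2j+1)$, and the fourth vanishes since $m''\in\buu_{2j}$, leaving exactly $\sum m'\otimes[m''\zeta_3]$, as required. Splicing the kernel and cokernel computations then yields \eqref{even_exact_sequence}. The only genuinely delicate point is this weight bookkeeping --- confirming that every ``error term'' of the coaction lands in the subspace being divided out --- and it is made manageable by the fact that all the subspaces involved are monomial.
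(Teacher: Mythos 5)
Your proposal is correct and follows essentially the same route as the paper: the map $\buu_{2j}\to Q^{2j-1}\AE{1}$ is the same inclusion-followed-by-projection, the kernel is identified as $M_2(2j)\cong\Sigma^{4j}\buu_j$ by the same weight bookkeeping, and the cokernel is identified with $M_2(2j-1)\otimes\F_2\{\zeta_3\}$ ($\zeta_3$ primitive) by exactly the coaction expansion $\alpha(m\zeta_3)=\alpha(m)\alpha(\zeta_3)$ with the error terms killed for weight reasons. The only cosmetic difference is that you transport everything into the explicit monomial subquotient $\buu_{2j+1}/(\buu_{2j}+M_2(2j+1))$ via the already-proved sequence \eqref{odd_exact_sequence}, where the paper instead works directly in $Q^{2j-1}\AE{1}$ using $\tau$ and the filtration $F^\bullet\AE{1}$; both versions of the bookkeeping are sound.
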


\begin{proof}
	Consider the following composite map
	\[
	\begin{tikzcd}
	\varphi:	\buu_{2j}\arrow[r, hook] & \AE{1}\arrow[r, "\pi"]& Q^{2j-1}\AE{1}
	\end{tikzcd}
	\]
	where the first is the inclusion and the other is the projection morphism. Observe that both maps are maps of $E(2)_*$-comodules, and hence $\varphi$ is a comodule map. Note that we have the following commutative diagram
	\[
	\begin{tikzcd}
		\buu_{2j}\arrow[r]\arrow[dr] & \AE{1}\arrow[r] \arrow[d,"\tau"]& Q^{2j-1}\AE{1}\arrow[d,"\tau"]\\
							& \AE{2}\otimes \E2E1 \arrow[r] & \tBPu{2}_{2j-1}\otimes \E2E1
	\end{tikzcd}
	\]
	Thus, the top composite is $\varphi$; the bottom composite shows that on the associated graded, $\varphi$ is given by 
	\[
	\varphi: m\zeta_3^{\epsilon}\mapsto \begin{cases}
		m\otimes \zeta_2^\epsilon & \wt(m)\leq 4j-2 \\
		0 & else
	\end{cases}.
	\]
	Since $\tau$ is an isomorphism of $\F_2$-vector spaces, in order to determine the kernel of $\varphi$, it is enough to determine the kernel of the bottom composite. Thus $m\zeta_2^{\epsilon}\in \ker\varphi$ if and only if 
	\[
	4j-2<\wt(m).
	\]
	But as $m\zeta_2^\epsilon \in \buu_{2j}$, we have that 
	\[
	\wt(m\zeta_2^\epsilon) \leq 4j
	\]
	But if $\epsilon=1$, then 
	\[
	\wt(m\zeta_2) > 4j-2+4 = 4j+2,
	\]
	a contradiction. So $\epsilon=0$ and $\wt(m)=4j$. This shows that the kernel of $\varphi$ is the subcomodule $M_2(2j)\subseteq \buu_{2j}$. 
	
	We now move onto to computing the cokernel of $\varphi$. We begin by identifying it as an $\F_2$-vector space. Consider a basis element $m\otimes \zeta_3^\epsilon$ where $m$ is a monomial of $\tBPu{2}_{2j-1}$. Suppose that $\epsilon=0$. As $\wt(m)\leq 4j-2$, one has $m\in \buu_{2j}$, and so $m\otimes 1$ is in the image. If $\epsilon=1$, then $m\otimes \zeta_3$ is in the image if and only if 
	\[
	\wt(m)\leq 4j-4,
	\]
	this bound comes from making sure that $m\zeta_2\in \buu_{2j}$. Thus there is an isomorphism
	\[
	\coker(\varphi) \cong_{\F_2} M_2(2j-1)\otimes \F_2\{\zeta_3\}; m\zeta_3\mapsto m\otimes \zeta_3
	\]
	We wish to show that this isomorphism is one of $E(2)_*$-comodules. Note that on the right hand side, the $\zeta_3$ is a comodule primitive. 
	
	Consider a monomial $m\zeta_3\in Q^{2j-1}\AE{1}$ representing an element of the cokernel. Then its coaction is given by 
	\[
	\alpha(m\zeta_2) = \alpha(m)(\zeta_3\otimes 1+ \zeta_2\otimes \zeta_1^4+ \zeta_1\otimes \zeta_2^2 +1\otimes \zeta_3).
	\]
	Let $t\otimes x$ denote a term in $\alpha(m)$. Note that $x$ isn't necessarily a monomial. Also, we have 
	\[
	\wt(x) = \wt(m) = 4j-2,
	\]
	and that $x\in \AE{2}$. So $x\in F^{2j-1}\AE{1}$. Note that $\zeta_1^4, \zeta_2^2\in F^2\AE{1}$. Thus $x\zeta_1^4, x\zeta_2^2\in F^{2j+1}\AE{1}$. Since
	\[
	Q^{2j-1}\AE{1} = \AE{1}/F^{2j}\AE{1}
	\]
	it follows that $x\zeta_1^4$ and $x\zeta_2^2$ in are 0 in $Q^{2j-1}\AE{1}$. Thus the terms $t\zeta_2\otimes x\zeta_1^4$ and $t\zeta_1\otimes \zeta_2^2$  are zero in $\alpha(m\zeta_2)$. 
	
	Consider the term $\zeta_3t\otimes m$. As $m$ is in the image of $\varphi$, $m$ is zero in the cokernel. So this term is 0 in $\alpha(m\zeta_3)$ in the cokernel. This shows that in $\coker\varphi$ one has
	\[
	\alpha(m\zeta_2) = \alpha(m)(1\otimes \zeta_3)
	\]
	which promotes the isomorphism above to one of $E(2)_*$-comodules. Applying Corollary \ref{MtoN} to the kernel and cokernel proves the result.
\end{proof}

\begin{rmk}\label{Ext-iso}
	The quotients $Q^j\AE{1}$ have finite filtrations projected from the filtration on $\AE{1}$. Applying $\Ext_{E(2)_*}$ to this filtration produces a spectral sequence
	\[
	E_1=\Ext_{E(2)_*}(E^0Q^j\AE{1})\implies \Ext_{E(2)_*}(Q^j\AE{1}).
	\]
	Since $E^0Q^j\AE{1}$ is isomorphic to $\tBPu{2}_j\otimes \E2E1$ as an $E(2)_*$-comodule, we have that the $E_1$-page is $\Ext_{E(1)}(\tBPu{2}_j)$ which we know consists of $v_1$-torsion elements on the 0-line and a $v_1$-torsion free component concentrated in even $(t-s)$-degree. The spectral sequence is linear over $\Ext_{E(2)_*}(\F_2)$, which implies that this spectral sequence collapses. Consequently, for the purposes of the inductive calculations in the following section, we will regard $Q^{j}\AE{1}$ as $\tBPu{2}_j\otimes \E2E1$.
\end{rmk}

\begin{rmk}
	We can describe the maps of the exact sequence above in the associated graded. Specifically, the map 
	\[
	\Sigma^{4j}\buu_j\to \buu_{2j}
	\]
	is given by 
	\[
	\zeta_1^{2i_1}\zeta_2^{2i_2}\zeta_3^{i_3}\cdots \mapsto \zeta_1^a\zeta_2^{2i_1}\zeta_3^{2i_3}\cdots 
	\]
	where
	\[
	a := 4j-\wt(\zeta_2^{2i_1}\zeta_3^{2i_3}\cdots ).
	\]
	The map 
	\[
	\buu_{2j}\to \tBPu{2}_{2j-1}\otimes E(\zeta_3)
	\]
	is given by 
	\[
	m\zeta_3^{\epsilon}\mapsto \begin{cases}
		m\otimes \zeta_2^\epsilon & \wt(m)\leq 4j-2 \\
		0 & else
	\end{cases}.
	\]
\end{rmk}

\subsection{Inductive calculations}

In the last section, we produced the exact sequences of comodules \eqref{odd_exact_sequence} and \eqref{even_exact_sequence}. As in \cite{BOSS}, we regard these as providing spectral sequences converging to $\Ext_{E(2)}(\buu_{2j+1})$ and $\Ext_{E(2)}(\buu_{2j})$ respectively. Following \cite{BOSS}, we write
\[
\bigoplus M_i[k_i]\implies M
\]
to denote the existence of a spectral sequence
\[
\bigoplus\Ext_{E(2)_*}^{s-k_i,t+k_i}(M_i)\implies \Ext_{E(2)_*}^{s,t}(M).
\]
We shall abbreviate $M_i[0]$ by $M_i$. Below, we shall always be identifying $\buu_j$ with $M_2(2j)$ via the isomorphism
\[
\begin{tikzcd}
	\varphi_2:M_2(2j)\arrow[r,"\cong"] & \Sigma^{4j}\buu_j.
\end{tikzcd}
\]

The exact sequence \eqref{odd_exact_sequence} gives a spectral sequence
\begin{equation}\label{oddspecseq}
\Sigma^{8j+4}Q^{2j-1}\AE{1}\oplus\left(\Sigma^{12j+4}\buu_j\otimes \buu_1\right)\implies \Sigma^{8j+4}\buu_{2j+1}
\end{equation}
and \eqref{even_exact_sequence} gives a spectral sequence
\begin{equation}\label{evenspecseq}
\Sigma^{12j}\buu_j\oplus \Sigma^{8j}Q^{2j-1}\AE{1}\oplus \Sigma^{12j+5}\buu_{j-1}[1]\implies \Sigma^{8j}\buu_{2j}
\end{equation}
We derive the spectral sequence \eqref{oddspecseq} by regarding the long exact sequence in Ext induced by \eqref{odd_exact_sequence} as a spectral sequence. In a similar way, one can derive a spectral sequence from a four term exact sequence.

It is these spectral sequences which will be the basis for our inductive approach to calculating $\Ext_{E(2)_*}(\Sigma^{4j}\buu_j)$. In particular, we will use these spectral sequences to inductively produce a basis for $\Ext_{E(2)_*}(\Sigma^{4j}\buu_j)$ as a module over $\F_2[v_0]$. 

%
%
%
%

In the exact sequences, there were the comodules $Q^{2j-1}\AE{1}$, and in Remark \ref{Ext-iso}, it was pointed out that 
\begin{equation*}
\begin{split}
\Ext_{E(2)_*}(Q^{2j-1}\AE{1})&\cong \Ext_{E(2)_*}((E(2)\sslash E(1))_*\otimes \tBPu{2}_{2j-1})\\
&\cong \Ext_{E(1)}(\tBPu{2}_{2j-1}).
\end{split}
\end{equation*}

In order to carry out the inductive computation, we must calculate $\Ext_{E(1)_*}$ of $\tBPu{2}_{2j-1}$. 

\begin{lem}\label{lem: BG2toBG1toBG0}
For any $j$, there are isomorphisms
\begin{multline*}
	\tBPu{2}_{j}\cong_{E(2)_*}\bigoplus_{0\leq k\leq j}M_2(k)\cong_{E(2)_*}\bigoplus_{0\leq k \leq j} \Sigma^{2k}\buu_{\floor{k/2}}\\ 
	\cong_{E(1)_*}\bigoplus_{k=0}^j\bigoplus_{\ell=0}^{\floor{k/2}}\Sigma^{2k+2\ell}\HZu_{\floor{\ell/2}}
\end{multline*}
\end{lem}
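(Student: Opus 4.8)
The plan is to assemble this chain of isomorphisms from the structural results already established in this section, iterating the $M$-to-$N$ decomposition twice. First I would handle the leftmost isomorphism: by Proposition \ref{decomp} applied to $i=2$, one has $\AE{2}\cong \bigoplus_{k\geq 0} M_2(k)$ as $E(2)_*$-comodules; since $\tBPu{2}_j = N_2(j)$ is by definition the subcomodule spanned by monomials of weight at most $2j$, and the splitting in Proposition \ref{decomp} is by exactly this weight grading, the sum of inclusions $M_2(k)\hookrightarrow N_2(j)$ for $0\leq k\leq j$ gives the first isomorphism $\tBPu{2}_j\cong \bigoplus_{0\leq k\leq j} M_2(k)$.

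Next, the second isomorphism is just Corollary \ref{MtoN} applied termwise with $i=2$: $M_2(k)\cong \Sigma^{2k}N_1(\floor{k/2}) = \Sigma^{2k}\buu_{\floor{k/2}}$, and summing over $0\leq k\leq j$ produces $\bigoplus_{0\leq k\leq j}\Sigma^{2k}\buu_{\floor{k/2}}$. For the final isomorphism, I would first note that $\AE{2}$ (hence each $N_2(k)$, hence each $\buu_{\floor{k/2}} = N_1(\floor{k/2})$) is an $E(1)_*$-comodule by restriction, and then repeat the same two moves one chromatic level down: apply Proposition \ref{decomp} with $i=1$ to write $\buu_m = N_1(m)\cong \bigoplus_{0\leq \ell\leq m} M_1(\ell)$ as $E(1)_*$-comodules, and then apply Corollary \ref{MtoN} with $i=1$ to get $M_1(\ell)\cong \Sigma^{2\ell}N_0(\floor{\ell/2}) = \Sigma^{2\ell}\HZu_{\floor{\ell/2}}$. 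Substituting $m = \floor{k/2}$, incorporating the suspension $\Sigma^{2k}$ from the previous step (so that $\Sigma^{2k}\Sigma^{2\ell} = \Sigma^{2k+2\ell}$), and summing over $0\leq k\leq j$ and $0\leq \ell\leq \floor{k/2}$ yields the claimed triple sum.

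The only subtlety worth remarking on is that Corollary \ref{MtoN} and Proposition \ref{decomp} are stated for $E(i)_*$-comodules, so to get the final $E(1)_*$-comodule isomorphism I must use that all the intermediate objects, originally $E(2)_*$-comodules, are being regarded as $E(1)_*$-comodules via the evident inclusion $E(1)_*\hookrightarrow E(2)_*$ dual to $E(1)\subseteq E(2)$; the weight grading used in both propositions is preserved under this restriction, so nothing goes wrong. I expect this to be the only point requiring a sentence of justification; the rest is a routine bookkeeping of suspensions and index ranges. In short, the proof is: apply Proposition \ref{decomp} and Corollary \ref{MtoN} at level $i=2$, then again at level $i=1$, and collate the suspension shifts.
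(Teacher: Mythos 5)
Your proposal is correct and follows exactly the paper's own argument: the first isomorphism is Proposition \ref{decomp} (restricted to weights at most $2j$), the second is a termwise application of Corollary \ref{MtoN} with $i=2$, and the third repeats both results with $i=1$ after restricting to $E(1)_*$-comodules. The extra remarks about truncating the weight decomposition and about restriction along $E(1)_*\subseteq E(2)_*$ are fine elaborations of points the paper leaves implicit.
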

\begin{proof}
The first isomorphism just follows from Proposition \ref{decomp}. From an application of Corollary \ref{MtoN} we obtain
\[
M_2(k) \cong_{E(2)_*} \Sigma^{2k}\buu_{\floor{k/2}}
\]
which gives the second isomorphism. Applying Proposition \ref{decomp} and Corollary \ref{MtoN} again, but with $i=1$ gives the third isomorphism.

\end{proof}

\begin{ex}
We have
	\[
	M_2(4) \cong_{E(2)_*} \Sigma^{8}\buu_2\cong_{E(1)_*} \Sigma^8(\HZu_0\oplus \Sigma^2\HZu_0\oplus \Sigma^4\HZu_1).
	\]
\end{ex}

It follows from \cite[part III, \textsection 16-17]{bluebook} that there is an isomorphism
\[
\Ext_{E(1)}(\HZu_k)/v_0\text{-torsion} \cong \Ext_{E(1)}(\F_2)^{\langle 2k-\alpha(k)\rangle}
\]
where $\Ext(M)^{\langle n\rangle}$ denotes the $n$th Adams cover of $\Ext(M)$ and $\alpha(k)$ denotes the number of 1's in the dyadic expansion of $k$. In \cite{bluebook}, this is a consequence of the fact that the Margolis homology groups $M_*(\HZu_k;Q_0)$ and $M_*(\HZu_k;Q_1)$ are both one dimensional (cf. Remark \ref{rmk: Adams covers}). Observe that when $v_0$ is inverted then 
\begin{equation}\label{v_0-inverted}
	\begin{split}
		v_0^{-1}\Ext_{E(1)}(\HZu_k)&\cong v_0^{-1}\Ext_{E(1)}(\F_2)^{\langle 2k-\alpha(k)\rangle } \\
					&= \F_2[v_0^{\pm1},v_1]\otimes_{\F_2}M_*(\HZu_k;Q_0)
	\end{split}
\end{equation}

\begin{ex}
Below, we illustrate this isomorphism for $\HZu_2$. 
\begin{center}
	\begin{sseq}[entrysize=10mm, grid=chess, labels=numbers]
		{-1...6}{-3...2}
\ssdrop{\bullet}
\ssdroplabel{1}
\ssvoidarrow 01
\ssline 21
\ssdrop{\bullet}
\ssvoidarrow 01
\ssline{0}{-1}
\ssdrop{\bullet}
\ssdroplabel{\zeta_1^2}
\ssvoidarrow 01
\ssline 21
\ssdrop{\bullet}
\ssvoidarrow 01
\ssline{0}{-1}
\ssdrop{\bullet}
\ssdroplabel{\zeta_1^4}
\ssline 21
\ssdrop{\bullet}
\ssvoidarrow 01
\ssline{0}{-1}
\ssdrop{\bullet}
\ssdroplabel{\zeta_2^2}
\ssvoidarrow{0}{-1}
\ssvoidarrow 21
\ssline{-2}{-1}
\ssdrop{\bullet}
\ssvoidarrow{0}{-1}
\ssline[]{0}{1}
\ssmove{0}{-1}
\ssline {-2}{-1}
\ssdrop{\bullet}
\ssvoidarrow{0}{-1}
\ssline 01
\ssdrop{\bullet}
\ssline21
\ssmove{-2}{-1}
\ssline 01
\ssmove{0}{-2}
\ssline{-2}{-1}
\ssdrop{\bullet}
\ssvoidarrow{0}{-1}
\ssline01
\ssdrop{\bullet}
\ssline[]{2}{1}
\ssmove{-2}{-1}
\ssline01
\ssdrop{\bullet}
\ssline[]{2}{1}
\ssmove{-2}{-1}
\ssline[]{0}{1}
	\end{sseq}
\end{center}
In the chart, elements below the 0-line are obtained by inverting $v_0$ and downward pointing arrows indicate $v_0^{-1}$-towers. Note that as a module over $\F_2[v_0^{\pm 1},v_1]$ that this $\Ext$-group is generated by the element in $(0,-3)$. Since $v_0$ is inverted, this module is also generated by the element in $(0,0)$, which is precisely the nonzero element of $M_*(\HZu_2;Q_0)$.
\end{ex}


The previous lemma tells us that $\Ext_{E(1)_*}(\tBP{2})$ is a direct sum of Adams covers of $\Ext_{E(1)_*}(\F_2)$. As a first step to determining $\Ext_{E(1)}(\tBPu{2}_{2j-1})$, we will compute $v_0$-inverted Ext groups first. This will allow us to locate the starting points of the Adams covers within the integral Ext groups. 

\begin{prop}[\cite{BOSS}]
	We have the isomorphism
	\[
	v_0^{-1}\Ext_{E(1)}(\tBPu{2}_{j})\cong \F_2[v_0^{\pm1},v_1]\{\zeta_1^{2i_1}\zeta_2^{2i_2}\mid i_1+2i_2\leq j\}
	\]
\end{prop}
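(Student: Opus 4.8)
The plan is to compute $v_0^{-1}\Ext_{E(1)}(\tBPu{2}_j)$ by combining the decomposition of Brown-Gitler comodules from Lemma \ref{lem: BG2toBG1toBG0} with the known formula \eqref{v_0-inverted} for the $v_0$-inverted Ext of $\HZu_k$. The point of inverting $v_0$ is that $v_0^{-1}\Ext_{E(1)}(-)$ becomes exact and kills all free summands, so it depends only on the $Q_0$-Margolis homology; concretely, for any bounded-below $E(1)_*$-comodule $M$ one has $v_0^{-1}\Ext_{E(1)}(M)\cong \F_2[v_0^{\pm1},v_1]\otimes_{\F_2} M_*(M;Q_0)$. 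Granting this, the computation reduces to identifying the $Q_0$-Margolis homology of $\tBPu{2}_j$ as a graded vector space and bookkeeping the internal degree shifts.

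The key steps, in order, are as follows. First, recall from Proposition \ref{MargolisBP2} (and the bigrading by length in Remark \ref{bigradingMargolis}, or more directly the weight filtration) that $M_*(\tBP{2};Q_0)=\F_2[\zeta_1^2,\zeta_2^2]$, and that the weight filtration is compatible with taking $Q_0$-Margolis homology since $Q_0$ preserves weight. Second, since $\tBPu{2}_j=N_2(j)$ is the weight-$\leq 2j$ part of $\AE2$ and $\AE2\cong\bigoplus_j M_2(j)$ as $E(2)_*$-comodules (Proposition \ref{decomp}), the $Q_0$-Margolis homology of $\tBPu{2}_j$ is exactly the weight-$\leq 2j$ part of $\F_2[\zeta_1^2,\zeta_2^2]$, i.e. it has basis $\{\zeta_1^{2i_1}\zeta_2^{2i_2}\mid \wt(\zeta_1^{2i_1}\zeta_2^{2i_2})\leq 2j\}$. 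Since $\wt(\zeta_1^2)=2$ and $\wt(\zeta_2^2)=4$, the weight condition $2i_1+4i_2\leq 2j$ is precisely $i_1+2i_2\leq j$. Third, apply $v_0^{-1}\Ext_{E(1)}(-)$: because this functor is exact and the weight pieces split off, we get
\[
v_0^{-1}\Ext_{E(1)}(\tBPu{2}_j)\cong \F_2[v_0^{\pm1},v_1]\otimes_{\F_2}M_*(\tBPu{2}_j;Q_0)\cong \F_2[v_0^{\pm1},v_1]\{\zeta_1^{2i_1}\zeta_2^{2i_2}\mid i_1+2i_2\leq j\},
\]
which is the desired formula. (One should note the internal degrees: each basis class $\zeta_1^{2i_1}\zeta_2^{2i_2}$ contributes a free $\F_2[v_0^{\pm1},v_1]$-summand starting in its own internal degree, matching the Adams cover bookkeeping in \eqref{v_0-inverted}.)

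Alternatively—and this is the route that most closely mirrors \cite{BOSS}—one can feed the explicit $E(1)_*$-decomposition $\tBPu{2}_j\cong_{E(1)_*}\bigoplus_{k=0}^j\bigoplus_{\ell=0}^{\floor{k/2}}\Sigma^{2k+2\ell}\HZu_{\floor{\ell/2}}$ from Lemma \ref{lem: BG2toBG1toBG0} into \eqref{v_0-inverted}, obtaining a direct sum of shifted copies of $\F_2[v_0^{\pm1},v_1]\otimes M_*(\HZu_{\floor{\ell/2}};Q_0)$, and then check that reassembling these shifted one-dimensional Margolis groups reproduces the monomial basis $\{\zeta_1^{2i_1}\zeta_2^{2i_2}\mid i_1+2i_2\leq j\}$ with the correct degrees. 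The main obstacle is precisely this bookkeeping: one must verify that the triple-indexed sum over $(k,\ell)$ with the shifts $\Sigma^{2k+2\ell}$ and the Adams-cover shift $\langle 2\floor{\ell/2}-\alpha(\floor{\ell/2})\rangle$ matches, degree for degree, the weight-filtered monomial description coming from Proposition \ref{MargolisBP2}; the cleaner Margolis-homology argument sketched above sidesteps this and is the one I would write up, relegating the comparison with the $\HZu$-decomposition to a remark.
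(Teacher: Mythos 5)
Your computation of $M_*(\tBPu{2}_j;Q_0)$ via the weight filtration (the monomials $\zeta_1^{2i_1}\zeta_2^{2i_2}$ with $i_1+2i_2\leq j$, since $Q_0$ preserves weight) is exactly what the paper does, and that part is fine. The gap is in the principle you hang the whole argument on: it is \emph{not} true that for an arbitrary bounded-below $E(1)_*$-comodule $M$ one has $v_0^{-1}\Ext_{E(1)}(M)\cong \F_2[v_0^{\pm 1},v_1]\otimes_{\F_2}M_*(M;Q_0)$, i.e.\ $v_0$-inverted Ext over $E(1)$ does not depend only on $Q_0$-Margolis homology. Take $M=E(1)\otimes_{E(Q_0)}\F_2$, the two-dimensional module with basis $x,y$, trivial $Q_0$-action and $Q_1x=y$. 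By the change-of-rings (Shapiro) isomorphism, $\Ext_{E(1)}(M)\cong\Ext_{E(Q_0)}(\F_2)=\F_2[v_0]$, so $v_0^{-1}\Ext_{E(1)}(M)\cong\F_2[v_0^{\pm1}]$ with $v_1$ acting trivially; but $M_*(M;Q_0)$ is two-dimensional, so your formula would predict a free module of rank two over $\F_2[v_0^{\pm1},v_1]$. The $Q_1$-action genuinely matters after inverting $v_0$. The formula does hold for modules that split as (free) $\oplus$ (invertibles) --- that is precisely how \eqref{v_0-inverted} is obtained for $\HZu_k$ --- but then you must supply that structural input for $\tBPu{2}_j$, which is exactly the content of Lemma \ref{lem: BG2toBG1toBG0} that your preferred write-up demotes to a remark. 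In other words, your ``alternative route'' is not optional bookkeeping; without it (or some substitute) the main argument has no valid first step.

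For comparison, the paper avoids any general claim about $E(1)$-modules by exploiting the ambient $A_*$-comodule structure: change of rings gives $v_0^{-1}\Ext_{E(1)_*}(\tBPu{2}_j)\cong v_0^{-1}\Ext_{A_*}(\AE{1}\otimes\tBPu{2}_j)$, the algebraic analogue of Serre's theorem reduces this to $v_0^{-1}\Ext_{A(0)_*}$, where there is a K\"unneth formula; the factor $v_0^{-1}\Ext_{A(0)_*}(\AE{1})=\F_2[v_0^{\pm1}]\otimes\F_2[\zeta_1^2]$ supplies the $v_1$ (since $\zeta_1^2$ detects $v_1$), and the factor $\F_2[v_0^{\pm1}]\otimes M_*(\tBPu{2}_j;Q_0)$ is computed by your weight-filtration argument. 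So to repair your proof, either run this $A(0)$/K\"unneth argument, or carry out your second route in full: feed the $E(1)_*$-splitting of Lemma \ref{lem: BG2toBG1toBG0} into \eqref{v_0-inverted} (each summand being stably invertible, hence contributing one copy of $\F_2[v_0^{\pm1},v_1]$ on its $Q_0$-Margolis class), and then identify the resulting generators with the weight-$\leq 2j$ monomials of $\F_2[\zeta_1^2,\zeta_2^2]$ as you already did.
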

\begin{proof}
	Given an $A$-comodule $M$, there is an isomorphism 
	\[
	v_0^{-1}\Ext_{A_*}(M)\cong v_0^{-1}\Ext_{A(0)_*}(M).
	\]
	This is an algebraic analogue of Serre's theorem that rational stable homotopy is the same as rational homology. Consider the following sequence of isomorphisms
	\begin{equation*}
		\begin{split}
			v_0^{-1}\Ext_{E(1)_*}(\tBPu{2}_{j})&\cong v_0^{-1}\Ext_{A_*}(\AE{1}\otimes \tBPu{2}_j)\\
				&\cong v_0^{-1}\Ext_{A(0)_*}(\AE{1}\otimes \tBPu{2}_j)
		\end{split}
	\end{equation*}
	When $v_0$ is inverted, the functor $v_0^{-1}\Ext_{A(0)_*}(-)$ has a K\"unneth formula. Hence the last Ext group is isomorphic to 
	\[
	v_0^{-1}\Ext_{A(0)_*}(\AE{1})\otimes_{\F_2[v_0^{\pm1}]}v_0^{-1}\Ext_{A(0)_*}(\tBPu{2}_{j}).
	\] 
	Calculating $v_0^{-1}\Ext_{A(0)_*}$ is extremely simple, it is the free $\F_2[v_0^{\pm1}]$-module generated by the $Q_0$-Margolis homology of $M$, i.e.
	\[
	v_0^{-1}\Ext_{A(0)_*}(M) = \F_2[v_0^{\pm1}]\otimes_{\F_2}M_*(M;Q_0).
	\]
	Lemma 16.9 of \cite{bluebook} states that the $Q_0$-Margolis homology of $\AE{1}$ is 
	\[
	M_*(\AE{1};Q_0) = \F_2[\zeta_1^2].
	\]
	Since the action by $Q_0$ preserves the weight on $\AE{2}$, there is an associated weight filtration on
	\[
	M_*(\tBP{2};Q_0) = \F_2[\zeta_1^2,\zeta_2^2]
	\]
	which implies that 
	\[
	M_*(\tBPu{2}_j;Q_0) = \F_2\{\zeta_1^{2i_1}\zeta_2^{2i_2}\mid i_1+2i_2\leq j\}.
	\]
	In the $v_0$-inverted Adams spectral sequence
	\[
	v_0^{-1}\Ext_{A(0)_*}(\AE{1})\implies H\Q_2\vphantom{}_*\bu  = \Q_2[v_1]
	\]
	$\zeta_1^2$ is detecting $v_1$. Thus we get the desired isomorphism.
\end{proof}

\begin{rmk}\label{rmk:adams covers}
By Lemma \ref{lem: BG2toBG1toBG0}, each of the monomials in $v_0^{-1}\Ext_{E(1)_*}(\tBPu{2}_j)$ determines an Adams cover of $\Ext_{E(1)_*}(\F_2)$. Here is an algorithm for determining the the Adams covers associated to a monomial $\zeta_1^{2i_1}\zeta_2^{2i_2}$:
\begin{enumerate}
	\item If $2i_2\geq 4$, then the next element in Adams cover is $\zeta_1^{2i_1}\zeta_2^{2i_2-4}\zeta_3^2$, and there is the relation
	\[ 
	v_1 \zeta_1^{2i_1}\zeta_2^{2i_2} = v_0\zeta_1^{2i_1}\zeta_2^{2i_2-4}\zeta_3^2.	
	\]
	If $2i_2-4<4$, then the process terminates. 
	\item If $2i_2-4\geq 4$, then repeat the previous step. Continue this until the exponent of $\zeta_2$ is 0 or 2. 
	\item Perform the previous steps on $\zeta_3$ until the exponent on $\zeta_3$ is 0 or 2. Then continue onto $\zeta_4, \zeta_5, \ldots$ and so on until the process terminates.
\end{enumerate}
\end{rmk}

Observe that in the spectral sequences \eqref{evenspecseq} and \eqref{oddspecseq} that upon inverting $v_0$, all the terms in the $E_1$-page are in even degree. So these spectral sequences collapse after $v_0$-localization. We also know from our general structural results that there can be no differential originating from a torsion class and hitting a $v_2$-torsion free class. Thus we have

\begin{prop}\label{inductiveSScollapse}
	In the spectral sequences \eqref{evenspecseq} and \eqref{oddspecseq}, the only nontrivial differentials must be between torsion classes. Consequently, the $v_0$ and $v_2$-inverted spectral sequences collapses immediately.
\end{prop}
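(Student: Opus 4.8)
The plan is to combine two inputs: the $v_0$-inverted computation just completed, and the global structural decomposition from Theorem \ref{mainSS2}. First I would observe that the exact sequences \eqref{odd_exact_sequence} and \eqref{even_exact_sequence} consist of $E(2)_*$-comodules which are direct sums of suspensions of $\buu$-Brown--Gitler comodules $\buu_k$ and copies of $Q^{2j-1}\AE{1}$, the latter identified (via Remark \ref{Ext-iso}) with $\tBPu{2}_{2j-1}\otimes \E2E1$. Each of these comodules is, by Theorem \ref{mainSS2} applied to the relevant sub/quotient comodules (or by the analysis in Section 2 specialized to Brown--Gitler pieces), such that its $\Ext_{E(2)_*}$ splits as a vector space $V$ concentrated in Adams filtration $0$ plus a $v_2$-torsion free part $\euscr{C}$ concentrated in even $(t-s)$-degree; similarly the $\Ext_{E(1)_*}(\tBPu{2}_{2j-1})$ contribution has its torsion on the $0$-line and a torsion-free part in even $(t-s)$-degree. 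Thus every class in the $E_1$-pages of \eqref{evenspecseq} and \eqref{oddspecseq} is either a filtration-$0$ (torsion) class or lives in even $(t-s)$-degree.

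Next I would run the same argument used in the proof of the corollary that the ASS for $\tBP{2}\wedge\tBP{2}$ collapses. Suppose $d_r$ is the first nonzero differential in \eqref{evenspecseq} or \eqref{oddspecseq}, say $d_r x = y$ with $y\neq 0$. Since these spectral sequences are linear over $\Ext_{E(2)_*}(\F_2)=\F_2[v_0,v_1,v_2]$ (the exact sequences are sequences of $E(2)_*$-comodules and the associated long exact sequences are $\Ext_{E(2)_*}(\F_2)$-linear), multiplying the differential by $v_2$ gives $d_r(v_2 x) = v_2 y$. If $y$ lies in the $v_2$-torsion free part $\euscr{C}$ of its $E_1$-summand, then $y$ has even $(t-s)$-degree, forcing $x$ to have odd $(t-s)$-degree and hence $x$ cannot itself be in a $\euscr{C}$-summand; so $x$ is a filtration-$0$ torsion class, which means $v_2 x = 0$ (the $V$-summands are $v_2$-torsion, by the corollary $v_2 V = 0$), whence $v_2 y = 0$, contradicting $y \in \euscr C$, $y\neq 0$. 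Therefore $y$ must lie in a $V$-summand, i.e. $y$ has Adams filtration $0$; but $y$ is hit by a $d_r$ with $r\geq 1$ and so has filtration at least $\geq 1$ (in the appropriate indexing), a contradiction unless we are careful. The cleaner phrasing: $y$ must lie in the torsion part of its summand and $x$ in the torsion part of its summand, so every nontrivial differential has both source and target torsion classes. This is exactly the first assertion.

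For the second assertion, once every nontrivial differential runs between torsion classes, inverting $v_2$ kills all such classes (the torsion summands $V$ satisfy $v_2 V = 0$), so the $v_2$-inverted $E_1$-page has no room for differentials and the $v_2$-localized spectral sequence collapses immediately; and inverting $v_0$ kills all the filtration-$0$ classes for the same reason ($v_0 V = 0$), so after $v_0$-localization the $E_1$-page is concentrated in even $(t-s)$-degree with no possible differentials, hence collapses. I expect the only genuinely delicate point to be bookkeeping: verifying that each comodule appearing in \eqref{evenspecseq} and \eqref{oddspecseq} genuinely inherits the ``torsion on the $0$-line, torsion-free in even degree'' dichotomy — this follows from the structural results of Section 2, since $\buu_k = N_1(k)$ and $\tBPu{2}_j$ are built out of Brown--Gitler subcomodules of $\AE{1}$ and $\AE{2}$ to which Theorem \ref{mainSS2} and the $\bu$-analogue apply — together with the compatibility of the $\F_2[v_0,v_1,v_2]$-module structure with the connecting maps, which is immediate since the exact sequences are exact sequences of $E(2)_*$-comodules.
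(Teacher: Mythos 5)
Your argument correctly rules out every differential whose \emph{target} is $v_2$-torsion free: parity forces the source of such a differential to be a torsion class, linearity over $\F_2[v_0,v_1,v_2]$ then gives $v_2y=d_r(v_2x)=0$, and $v_2$-torsion-freeness of $y$ yields a contradiction. (This is the same trick as in the corollary to Theorem \ref{mainSS2}, and it would in fact already give the collapse of the $v_0$- and $v_2$-localized spectral sequences, since any remaining differential has torsion target, which dies upon localization.) But the proposition asserts more: that the \emph{source} of every nontrivial differential is also a torsion class, and this half you assert rather than prove. Your own aside (``a contradiction unless we are careful'') is pointing at the real issue: torsion classes sit in $\Ext^0$ of their summand, but because of the shifts in \eqref{evenspecseq} and \eqref{oddspecseq} (e.g.\ the $\buu_{j-1}[1]$ term) and because the differentials raise homological degree, they are perfectly eligible targets; they also occur in both parities of $t-s$, so the parity argument says nothing, and $v_2$-linearity says nothing either, since for a differential $d_r(y)=x$ with $y$ in $\euscr{C}$ and $x$ torsion one only gets $d_r(v_2y)=v_2x=0$, which is no contradiction. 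So a differential out of a $v_2$-torsion-free class into a torsion class is not excluded by anything in your proposal, and your final sentence (``and $x$ in the torsion part of its summand'') is exactly the unproved claim.

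The paper closes this gap with an ingredient you never invoke: the collapse of the $v_2$-Bockstein spectral sequence for $Q$ implies that $\Ext_{E(2)_*}(Q)$, and hence $\Ext_{E(2)_*}(\buu_j)$, is generated as a module over $\F_2[v_0,v_1,v_2]$ by its $0$-line. Granting this, if a $v_2$-torsion-free class $y$ were involved in a nontrivial differential, then $v_2y$ would survive as a nonzero permanent cycle in positive filtration whose detected element in the abutment cannot be written as an $\F_2[v_0,v_1,v_2]$-multiple of $0$-line classes, a contradiction; the paper runs this argument in both directions ($d(x)=y$ and $d(y)=x$). Without this generation statement (or some substitute for it) the first assertion of the proposition --- which is what is actually used later, e.g.\ to identify the summands \eqref{BPsummand_even}--\eqref{odd-inductive-summand} and to resolve the hidden $v_2$-extensions in Corollary \ref{v_2attaching} and Remark \ref{rmk: integral calculations} --- is not established by your proposal.
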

\begin{proof}
	Since the $v_2$-torsion free component is concentrated in even $(t-s)$-degree, there are no differentials between $v_2$-torsion free classes. Recall that we had the decomposition
	\[
	\AE{2} = S\oplus Q
	\]
	and that the BSS
	\[
	\Ext_{E(1)_*}(Q)\otimes \F_2[v_2]\implies \Ext_{E(2)_*}(Q)
	\]
	collapses. The latter implies that $\Ext_{E(2)}(Q)$ is generated by the elements in $\Ext^{0,*}$ as a module over $\F_2[v_0,v_1,v_2]$. Thus $\Ext_{E(2)}(\buu_j)$ is generated as a module over $\F_2[v_0,v_1,v_2]$ by elements in $\Ext_{E(2)_*}^{0,*}$. 
	
	Now consider one of the spectral sequences \eqref{oddspecseq} or \eqref{evenspecseq}.	 Note that the differentials are linear over $\F_2[v_0,v_1,v_2]$. So if there were a differential $d(x)=y$ where $x$ is a torsion class and $y$ is $v_2$-torsion free, then $v_2y$ would be a permanent cycle in the spectral sequence. But this would contradict that $\Ext_{E(2)}(\buu_j)$ is generated over $\F_2[v_0,v_1,v_2]$ by elements in $\Ext^{0,*}$. A similar argument shows that there cannot be a differential $d(y)=x$. 
\end{proof}

A consequence of this is that in the inductive calculations, we can essentially ignore the torsion classes on the $E_1$-page. 


We will now carry out the inductive calculations. We begin with some remarks on the $v_0$-inverted calculations, starting with developing analogues of Lemmas 5.11 and 5.12 of \cite{BOSS}. 

Since the $v_0$-inverted versions of \eqref{evenspecseq} and \eqref{oddspecseq} collapse at $E_2$, we get summands
\begin{equation}\label{BPsummand_even}
v_0^{-1}\Ext_{E(1)_*}(\Sigma^{8j}\tBPu{2}_{2j-1})\subseteq v_0^{-1}\Ext_{E(2)_*}(\Sigma^{8j}\buu_{2j})
\end{equation}
\begin{equation}\label{BPsummand-odd}
v_0^{-1}\Ext_{E(1)_*}(\Sigma^{8j+4} \tBPu{2}_{2j-1})\subseteq v_0^{-1}\Ext_{E(2)_*}(\Sigma^{8j+4}\buu_{2j+1})
\end{equation}
We will identify the generators of these summands. This is accomplished by contemplating the following portion  of the $8j$-fold suspension of the exact sequence \eqref{even_exact_sequence}:
\[
\begin{tikzcd}[column sep =  small]
\Sigma^{8j}\buu_{2j}\arrow[d, "\cong"]\arrow[r] & \Sigma^{8j}\E2E1\otimes \tBPu{2}_{2j-1} .\\
   M_2(4j) &
\end{tikzcd}
\] 
Let $\zeta_1^{2i_1}\zeta_2^{2i_2}\in v_0^{-1}\Ext_{E(1)_*}(\tBPu{2}_{2j-1})$, then in the diagram above we have
\begin{center}
\begin{tikzcd}
\zeta_1^{2i_1}\zeta_2^{2i_2}\arrow[r, mapsto]\arrow[d,mapsto] & \zeta_1^{2i_1}\zeta_2^{2i_2}\\
\zeta_1^a\zeta_2^{2i_1}\zeta_3^{2i_2}
\end{tikzcd}
\end{center}
where 
\[
a:= 8j-4i_1-8i_2.
\]
Similarly, we could contemplate the diagrams below coming from \eqref{odd_exact_sequence}:
\[
\begin{tikzcd}[column sep = small]
 \Sigma^{8j+4}\buu_{2j+1}\arrow[d,"\cong"] \arrow[r] & \Sigma^{8j+4}\E2E1\otimes \tBPu{2}_{2j-1}\arrow[r] & 0\\
 M_2(4j+2) & &
\end{tikzcd}
\]

This results in 

\begin{lem}\label{BPgenerators}
	The summands \eqref{BPsummand_even} and \eqref{BPsummand-odd} are generated as modules over $\F_2[v_0^{\pm 1}, v_1]$ by elements
\[
\zeta_1^a\zeta_2^{2i_2}\zeta_3^{2i_3}
\]
with $i_2+2i_3\leq 2j-1$ and $a= 8j-4i_2-8i_3$ (resp. $a= 8j+4-4i_2-8i_3$).
\end{lem}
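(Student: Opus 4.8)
The plan is to realise each of the summands \eqref{BPsummand_even} and \eqref{BPsummand-odd} as the image of a section of a surjection of $v_0$-inverted $\Ext$-groups, and then to read off the lifts of the module generators of that summand from the explicit formulas for the maps appearing in \eqref{even_exact_sequence} and \eqref{odd_exact_sequence}. By Proposition~\ref{inductiveSScollapse}, after inverting $v_0$ the spectral sequences \eqref{evenspecseq} and \eqref{oddspecseq} collapse, so the comodule map $\buu_{2j}\to Q^{2j-1}\AE{1}$ of \eqref{even_exact_sequence} induces a split surjection
\[
v_0^{-1}\Ext_{E(2)_*}(\Sigma^{8j}\buu_{2j})\twoheadrightarrow v_0^{-1}\Ext_{E(2)_*}(\Sigma^{8j}Q^{2j-1}\AE{1}),
\]
and by Remark~\ref{Ext-iso} together with the change-of-rings isomorphism the target is $v_0^{-1}\Ext_{E(1)_*}(\Sigma^{8j}\tBPu{2}_{2j-1})$, which is \eqref{BPsummand_even}. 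Thus it suffices to lift a set of $\F_2[v_0^{\pm1},v_1]$-module generators of $v_0^{-1}\Ext_{E(1)_*}(\tBPu{2}_{2j-1})$ along this map; the analogous statement with $\buu_{2j+1}$ in place of $\buu_{2j}$ and $8j+4$ in place of $8j$ handles \eqref{BPsummand-odd}.

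I would carry out the lift as follows. By the computation of $v_0^{-1}\Ext_{E(1)_*}(\tBPu{2}_{j})$ above, $v_0^{-1}\Ext_{E(1)_*}(\tBPu{2}_{2j-1})$ is free over $\F_2[v_0^{\pm1},v_1]$ on the classes of the $Q_0$-Margolis generators $\zeta_1^{2i_1}\zeta_2^{2i_2}$ with $i_1+2i_2\le 2j-1$, which under the identification $Q^{2j-1}\AE{1}\simeq\tBPu{2}_{2j-1}\otimes\E2E1$ are the elements $\zeta_1^{2i_1}\zeta_2^{2i_2}\otimes 1$. The monomial $\zeta_1^{2i_1}\zeta_2^{2i_2}$ has weight $2(i_1+2i_2)\le 4j-2$, so it lies in $\buu_{2j}$ (and in $\buu_{2j+1}$), and by the formula for the map $\buu_{2j}\to Q^{2j-1}\AE{1}$ on associated graded objects recalled after Lemma~\ref{lem:even-exact-seq} (namely $m\zeta_3^{\epsilon}\mapsto m\otimes\zeta_3^{\epsilon}$ whenever $\wt(m)\le 4j-2$) it maps precisely to $\zeta_1^{2i_1}\zeta_2^{2i_2}\otimes 1$; hence it is the desired lift. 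Finally, under the isomorphism $\varphi_2\colon M_2(4j)\xrightarrow{\ \cong\ }\Sigma^{8j}\buu_{2j}$ of Corollary~\ref{MtoN}, whose inverse $f_{2,4j}$ is given in the remark preceding that corollary, the element $\zeta_1^{2i_1}\zeta_2^{2i_2}\in\buu_{2j}$ corresponds to $\zeta_1^{a}\zeta_2^{2i_1}\zeta_3^{2i_2}\in M_2(4j)$ with $a = 2\cdot 4j-\wt(\zeta_2^{2i_1}\zeta_3^{2i_2})=8j-4i_1-8i_2$. Renaming $(i_1,i_2)$ as $(i_2,i_3)$ gives exactly the generators claimed for \eqref{BPsummand_even}; for \eqref{BPsummand-odd} the identical argument with $\varphi_2\colon M_2(4j+2)\xrightarrow{\ \cong\ }\Sigma^{8j+4}\buu_{2j+1}$ and $f_{2,4j+2}$ yields $a=2\cdot(4j+2)-\wt(\zeta_2^{2i_1}\zeta_3^{2i_2})=8j+4-4i_1-8i_2$.

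The step that needs the most care is the justification that this diagram chase may be performed at the level of associated graded objects. The identification $Q^{j}\AE{1}\simeq\tBPu{2}_{j}\otimes\E2E1$ is only an isomorphism of the associated gradeds for the weight filtration, $Q^{j}\AE{1}$ itself is not an $A_*$-comodule, and the collapsing filtration spectral sequence of Remark~\ref{Ext-iso} will in general carry hidden $v_0$-extensions; so one must track the generators through that spectral sequence rather than through the naive Margolis homology of $Q^{j}\AE{1}$. The point that makes this work is that in the associated graded $\zeta_3$ becomes primitive (Example~\ref{ex:coaction_assoc_graded}), so that the $Q_0$-Margolis homology computations underlying the preceding proposition apply verbatim. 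Granting this bookkeeping, the proof is the explicit computation above.
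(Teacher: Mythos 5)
Your proposal is correct and follows essentially the same route as the paper: invoke the collapse of the $v_0$-inverted spectral sequences \eqref{evenspecseq} and \eqref{oddspecseq} (Proposition \ref{inductiveSScollapse}), take the known $\F_2[v_0^{\pm1},v_1]$-generators $\zeta_1^{2i_1}\zeta_2^{2i_2}$ of $v_0^{-1}\Ext_{E(1)_*}(\tBPu{2}_{2j-1})$, and transport them through the explicit map $\buu_{2j+\epsilon}\to Q^{2j-1}\AE{1}$ and the identification $M_2(4j+2\epsilon)\cong\Sigma^{8j+4\epsilon}\buu_{2j+\epsilon}$ via $f_{2,4j+2\epsilon}$ to obtain $\zeta_1^{a}\zeta_2^{2i_2}\zeta_3^{2i_3}$ with the stated value of $a$. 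Your phrasing of the even case as a split surjection onto $v_0^{-1}\Ext_{E(2)_*}(\Sigma^{8j}Q^{2j-1}\AE{1})$ is slightly looser than warranted by the four-term sequence \eqref{even_exact_sequence}, but this is the same level of informality as the paper's diagram chase and does not affect the identification of generators.
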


Next we need to determine the generators arising from the terms $\Sigma^{4j}\buu_j$ in the case of \eqref{even_exact_sequence} and $\Sigma^{4j}\buu_j\otimes\buu_1$ in the case of \eqref{odd_exact_sequence}. Because of Proposition \ref{inductiveSScollapse}, we obtain summands 
\begin{equation}\label{even-inductive-summand}
	v_0^{-1}\Ext_{E(2)}(\Sigma^{12j}\buu_j) \subseteq v_0^{-1}\Ext_{E(2)}(\Sigma^{8j}\buu_{2j})
\end{equation}
\begin{equation}\label{odd-inductive-summand}
	v_0^{-1}\Ext_{E(2)}(\Sigma^{12j+4}\buu_j\otimes\buu_1)\subseteq v_0^{-1}\Ext_{E(2)}(\Sigma^{8j+4}\buu_{2j+1})
\end{equation}

\begin{prop}\label{inductive_generators}
	Assume inductively via the exact sequences \eqref{odd_exact_sequence}, \eqref{even_exact_sequence} that $v_0^{-1}\Ext_{E(2)}(\Sigma^{4j}\buu_j)$ has generators of the form $\{\zeta_1^{i_1}\zeta_2^{i_2}\cdots\}$. Then the summand of \eqref{even-inductive-summand} has generators of the form $\{\zeta_2^{i_1}\zeta_3^{i_2}\cdots\}$ and the summand \eqref{odd-inductive-summand} has generators 
	\[
	\{\zeta_2^{i_1}\zeta_3^{i_2}\cdots\}\cdot\{\zeta_1^4,\zeta_2^2\}.
	\]
\end{prop}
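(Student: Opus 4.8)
The plan is to chase the inductively-known generators through the spectral sequences \eqref{evenspecseq} and \eqref{oddspecseq} after inverting $v_0$. By Proposition \ref{inductiveSScollapse}, the $v_0$-inverted forms of these spectral sequences collapse at $E_1$, so \eqref{even-inductive-summand} and \eqref{odd-inductive-summand} are honest $\F_2[v_0^{\pm1},v_1,v_2]$-module direct summands, included via the edge homomorphisms, i.e.\ via $\Ext_{E(2)_*}(-)$ applied to the comodule maps $\Sigma^{4j}\buu_j\to\buu_{2j}$ of \eqref{even_exact_sequence} and $\Sigma^{4j}\buu_j\otimes\buu_1\to\buu_{2j+1}$ of \eqref{odd_exact_sequence}. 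Since, as observed in the proof of Proposition \ref{inductiveSScollapse}, all of the relevant Ext-modules are generated over $\F_2[v_0,v_1,v_2]$ by comodule primitives, it will suffice to compute the images of the inductively-given generating monomials under these two comodule maps.

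In the even case, the Remark following Lemma \ref{lem:even-exact-seq} describes $\Sigma^{4j}\buu_j\to\buu_{2j}$ explicitly: on monomials it performs the substitution $\zeta_k\mapsto\zeta_{k+1}$ and multiplies by the unique power $\zeta_1^a$ of $\zeta_1$ needed to raise the weight to $4j$. So the generators $\{\zeta_1^{i_1}\zeta_2^{i_2}\cdots\}$ of $v_0^{-1}\Ext_{E(2)_*}(\Sigma^{4j}\buu_j)$ map to $\zeta_1^a\zeta_2^{i_1}\zeta_3^{i_2}\cdots$, and I would then absorb the leading $\zeta_1^a$: using relations among the $\F_2[v_0^{\pm1},v_1,v_2]$-module generators of $v_0$-inverted Ext analogous to those in Remark \ref{rmk:adams covers}, the monomial $\zeta_1^a\zeta_2^{i_1}\zeta_3^{i_2}\cdots$ is a $v_0^{\pm1},v_1$-multiple of $\zeta_2^{i_1}\zeta_3^{i_2}\cdots$, so the two families generate the same submodule, giving the claimed generators $\{\zeta_2^{i_1}\zeta_3^{i_2}\cdots\}$ of \eqref{even-inductive-summand}.

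In the odd case, I would first note that $\buu_1=\F_2\{1,\zeta_1^2\}$ is a sum of trivial $E(2)_*$-comodules $\F_2\oplus\Sigma^2\F_2$; in fact $f_{2,2}$ identifies $\buu_1$ with $\Sigma^{-4}M_2(2)=\Sigma^{-4}\F_2\{\zeta_1^4,\zeta_2^2\}$, carrying $1\mapsto\zeta_1^4$ and $\zeta_1^2\mapsto\zeta_2^2$. Hence $v_0^{-1}\Ext_{E(2)_*}(\Sigma^{4j}\buu_j\otimes\buu_1)$ is generated over $\F_2[v_0^{\pm1},v_1,v_2]$ by the products $\{\zeta_1^{i_1}\zeta_2^{i_2}\cdots\}\cdot\{1,\zeta_1^2\}$; pushing these into $\buu_{2j+1}$ through the re-indexing map on the $\buu_j$-factor together with multiplication by the $\buu_1$-factor, and then absorbing powers of $\zeta_1$ exactly as in the even case, produces generators of the form $\{\zeta_2^{i_1}\zeta_3^{i_2}\cdots\}\cdot\{\zeta_1^4,\zeta_2^2\}$.

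The hard part will be the absorption of $\zeta_1$-powers: one must check that the spurious powers of $\zeta_1$ introduced by the weight-corrections in the re-indexing maps can be traded, via the relations among the module generators of the $v_0$-inverted $\Ext_{E(2)_*}$ of a Brown--Gitler comodule, for the stated normal forms. This is a matter of careful bookkeeping with weights, and is where the explicit combinatorics of the $\buu$-Brown--Gitler comodules must be brought to bear.
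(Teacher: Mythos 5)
Your outline agrees with the paper's: use Proposition \ref{inductiveSScollapse} to see that the $v_0$-inverted spectral sequences collapse, and then push the inductively given generators through the comodule maps $\Sigma^{4j}\buu_j\to\buu_{2j}$ and $\Sigma^{4j}\buu_j\otimes\buu_1\to\buu_{2j+1}$. The gap is exactly the step you flag as ``the hard part'', the absorption of $\zeta_1$-powers: it is unnecessary under the correct bookkeeping, and false as stated. In this inductive scheme a generator of $v_0^{-1}\Ext_{E(2)_*}(\Sigma^{4j}\buu_j)$ is named by a monomial $\zeta_1^{i_1}\zeta_2^{i_2}\cdots$ of weight \emph{exactly} $4j$, via the standing identification $\Sigma^{4j}\buu_j\cong M_2(2j)$ (compare the tables: the generators of $\Sigma^{4}\buu_1$ are $\zeta_1^4,\zeta_2^2$, not $1,\zeta_1^2$). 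With this convention the weight arithmetic closes up on the nose: under $\Sigma^{4j}\buu_j\cong M_2(2j)\hookrightarrow\buu_{2j}$ the class named $\zeta_1^{i_1}\zeta_2^{i_2}\cdots$ goes to $\zeta_1^{a}\zeta_2^{i_2}\zeta_3^{i_3}\cdots$ with $a=4j-\wt(\zeta_2^{i_2}\zeta_3^{i_3}\cdots)=i_1$, i.e.\ to the monomial $\zeta_1^{i_1}\zeta_2^{i_2}\cdots$ itself, and its name under $\Sigma^{8j}\buu_{2j}\cong M_2(4j)$ is the plain index shift $\zeta_2^{i_1}\zeta_3^{i_2}\cdots$, the required $\zeta_1$-exponent being $8j-2\cdot 4j=0$. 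That computation \emph{is} the proof; in the odd case the factor $\buu_1=\F_2\{1,\zeta_1^2\}$ contributes weight corrections $4$ and $0$, which is precisely where the factor $\{\zeta_1^4,\zeta_2^2\}$ comes from (your treatment of the $\buu_1$-factor via $f_{2,2}$ is the right idea). So no spurious $\zeta_1$-powers ever arise once the names are kept consistent; they only appear in your write-up because you apply the map to one naming convention and state the answer in another.

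The absorption claim itself cannot be repaired. The relations of Remark \ref{rmk:adams covers} never change the $\zeta_1$-exponent, and the relations \eqref{basic relations} that do move a power of $\zeta_1$ necessarily involve $v_2$ and relate $v$-multiples of \emph{distinct} module generators; nothing exhibits $\zeta_1^{a}m$ as an $\F_2[v_0^{\pm1},v_1]$-multiple of $m$. Concretely, take $j=1$: the only non-primitive basis element of $\buu_2$ is $\zeta_3$, whose coboundary gives (in the $M_2(4)$-names) the single relation $v_2\zeta_1^8=v_1\zeta_2^4+v_0\zeta_3^2$, so $v_0^{-1}\Ext_{E(2)_*}(\Sigma^{8}\buu_2)$ is free over $\F_2[v_0^{\pm1},v_1,v_2]$ on $\zeta_1^8$, $\zeta_2^4$ and $\zeta_1^4\zeta_2^2$. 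The image of the inductive generator $\zeta_1^4$ of $\Sigma^4\buu_1$ is the class $\zeta_2^4$, and your absorption would declare it an $\F_2[v_0^{\pm1},v_1]$-multiple of $\zeta_1^8$; this is false, as the two are members of a free basis, linked only through the displayed relation involving $v_2$. (The same phenomenon already occurs at height one, where the classes $\zeta_1^{2i}$ are independent $\F_2[v_0^{\pm1},v_1]$-generators of $v_0^{-1}\Ext_{E(1)_*}$ of the relevant comodules.) Worse, if such an identification were made it would conflate generators of the inductive summands \eqref{even-inductive-summand}, \eqref{odd-inductive-summand} with generators of \eqref{BPsummand_even}, \eqref{BPsummand-odd}, destroying exactly the $\zeta_1$-exponent bookkeeping (Remark \ref{rmk: generators from where}, Corollary \ref{v_2attaching}, and the hidden $v_2$-extension propositions) that the rest of the section depends on.
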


The proof of this proposition follows by considering the diagrams
\[
\begin{tikzcd}
	0\arrow[r] & \Sigma^{12j}\buu_j\arrow[r] & \Sigma^{8j}\buu_{2j}\arrow[d,"\cong"]\\
	 &\Sigma^{4j}M_2(2j)\arrow[u,"\cong"] & M_2(4j) \\
	 0\arrow[r] & \Sigma^{12j+4}\buu_j\otimes \buu_1\arrow[r] & \Sigma^{8j+4}\buu_{2j+1}\arrow[d,"\cong"]\\
	 & \Sigma^{4j}M_2(2j)\otimes \buu_1\arrow[u,"\cong"] & M_2(4j+2)
\end{tikzcd}
\]

For example, in the first diagram, given a monomial $\zeta_1^{i_1}\zeta_2^{i_2}\cdots$ in $\Ext_{E(2)}(\Sigma^{4j}\buu_j)$, we would obtain
\[
\begin{tikzcd}
	\zeta_1^{i_2}\zeta_3^{i_2}\cdots \arrow[r,mapsto] & \zeta_1^{a}\zeta_2^{i_2}\zeta_3^{i_3}\cdots \arrow[d,mapsto]\\
	\zeta_1^{i_1}\zeta_2^{i_2}\cdots \arrow[u,mapsto] & \zeta_2^{i_1}\zeta_3^{i_2}\cdots 
\end{tikzcd},
\]
the right hand vertical arrow follows from the fact that 
\[
a = 4j-\wt(\zeta_2^{i_2}\zeta_3^{i_3}\cdots) = i_1.
\]

There remains two questions regarding these inductive calculations: What is the role of the summand $\Ext_{E(2)}(\Sigma^{12j+5}\buu_{j-1}[1])$, and how does one determine the $v_2$-extensions in the spectral sequences on summands \eqref{BPsummand_even} and \eqref{BPsummand-odd}? The following lemma will be useful.

\begin{lem}
	The composite
	\[
	\begin{tikzcd}
	M_2(2j-1)\otimes E(\zeta_3)\arrow[r,"\tau^{-1}"] & \AE{1}\arrow[r] & Q^{2j-1}\AE{1}
	\end{tikzcd}
	\]
	is a map of $E(2)_*$-comodules.
\end{lem}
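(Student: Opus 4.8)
The plan is to verify the claim by a direct computation on the monomial basis. Write $\Phi$ for the composite. On a basis element $m\otimes\zeta_3^{\epsilon}$ of $M_2(2j-1)\otimes E(\zeta_3)$ — here $m$ is a monomial of $\AE{2}$ of weight exactly $4j-2$, $\epsilon\in\{0,1\}$, and $E(\zeta_3)=\E2E1$ — the map $\tau^{-1}$ sends it to $m\zeta_3^{\epsilon}\in\AE{1}$, so $\Phi(m\otimes\zeta_3^{\epsilon})$ is the class $\overline{m\zeta_3^{\epsilon}}$ in $Q^{2j-1}\AE{1}=\AE{1}/F^{2j}\AE{1}$. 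Since $\Phi$ is $\F_2$-linear, it remains to check that it intertwines the coactions: on the source, the diagonal $E(2)_*$-coaction restricted from $\AE{2}\otimes\E2E1$ (so that $M_2(2j-1)$ carries its $E(2)_*$-subcomodule structure and $\alpha(\zeta_3)=\zeta_3\otimes 1+1\otimes\zeta_3$, cf.\ Remark \ref{Ext-iso}); on the target, the quotient coaction induced from $\AE{1}$.

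I would first record two preliminary facts. First, $\AE{2}$ is an $E(2)_*$-subcomodule algebra of $\AE{1}$ (it is the subalgebra of even powers of $\zeta_3$): comparing the coaction formulas on the polynomial generators, $\zeta_1^2,\zeta_2^2,\zeta_3^2$ are comodule primitive in both, and the expression for $\alpha(\zeta_n)$ with $n\ge 4$ is literally the same in $\AE{1}$ and $\AE{2}$, so the coaction on $\AE{1}$ restricts to that on $\AE{2}$. Consequently, on the summand $M_2(2j-1)\otimes 1$ of the source $\Phi$ is simply the composite of comodule maps $M_2(2j-1)\hookrightarrow\AE{2}\hookrightarrow\AE{1}\twoheadrightarrow Q^{2j-1}\AE{1}$, and there is nothing to prove there. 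Second, a filtration estimate: if $x$ is a monomial of $\AE{2}$ of weight $4j-2$, then $x\zeta_1^4$ and $x\zeta_2^2$ both have weight $4j+2$ and contain no odd power of $\zeta_3$, hence lie in $F^{2j}\AE{1}$ by Lemma \ref{weight_bounded_below}, so $\overline{x\zeta_1^4}=\overline{x\zeta_2^2}=0$ in $Q^{2j-1}\AE{1}$; whereas $x\zeta_3$ has an odd power of $\zeta_3$ and lies in $F^{2j-1}\AE{1}$ but not in $F^{2j}\AE{1}$, so it generally survives in the quotient.

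The heart of the argument is the case $\epsilon=1$. Write $\alpha_{\AE{2}}(m)=\sum_i t_i\otimes x_i$; since $M_2(2j-1)$ is a subcomodule, each $x_i$ is again a monomial of weight $4j-2$. Using that $\AE{1}$ is a comodule algebra, that $\alpha_{\AE{1}}(m)=\alpha_{\AE{2}}(m)$, and that the coproduct formula gives $\alpha_{\AE{1}}(\zeta_3)=1\otimes\zeta_3+\zeta_1\otimes\zeta_2^2+\zeta_2\otimes\zeta_1^4+\zeta_3\otimes 1$, one expands $\alpha_{\AE{1}}(m\zeta_3)$ and passes to the quotient; the two cross terms $\sum_i t_i\zeta_1\otimes x_i\zeta_2^2$ and $\sum_i t_i\zeta_2\otimes x_i\zeta_1^4$ vanish by the filtration estimate, leaving
\[
\alpha_{Q^{2j-1}\AE{1}}\bigl(\Phi(m\otimes\zeta_3)\bigr)=\sum_i t_i\otimes\overline{x_i\zeta_3}+\sum_i t_i\zeta_3\otimes\overline{x_i}.
\]
On the source side, the diagonal coaction together with $\alpha(\zeta_3)=\zeta_3\otimes 1+1\otimes\zeta_3$ gives $\alpha(m\otimes\zeta_3)=\sum_i t_i\zeta_3\otimes(x_i\otimes 1)+\sum_i t_i\otimes(x_i\otimes\zeta_3)$, and applying $\mathrm{id}\otimes\Phi$ produces exactly the same expression. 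This, together with the $\epsilon=0$ case above, establishes that $\Phi$ is a map of $E(2)_*$-comodules.

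The only non-formal step is the filtration estimate: one must see that precisely the terms in $\alpha_{\AE{1}}(\zeta_3)$ responsible for $\tau$ failing to be a comodule map — namely $\zeta_1\otimes\zeta_2^2$ and $\zeta_2\otimes\zeta_1^4$ — are pushed into $F^{2j}\AE{1}$ once multiplied against a top-weight monomial of $M_2(2j-1)$, and hence die in $Q^{2j-1}\AE{1}$. This is exactly where it matters that we work in the \emph{top} weight $2(2j-1)$ of $\tBPu{2}_{2j-1}$ rather than an arbitrary weight; for a general weight the analogous composite is not a comodule map. Everything else is routine bookkeeping with the coproduct formula and the comodule-algebra structures.
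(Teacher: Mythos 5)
Your proof is correct and is essentially the paper's own argument: the paper proves this lemma by pointing to the latter half of the proof of Lemma \ref{lem:even-exact-seq}, which is exactly your computation --- expand $\alpha(m\zeta_3)$ using the comodule-algebra structure of $\AE{1}$ and observe that the cross terms $t\zeta_1\otimes x\zeta_2^2$ and $t\zeta_2\otimes x\zeta_1^4$ die in $Q^{2j-1}\AE{1}$ because the right-hand factors have weight $4j+2\geq 4j$ and hence lie in $F^{2j}\AE{1}$. Your separate treatment of the $\epsilon=0$ case and of the surviving $\zeta_3\otimes 1$ term is the same bookkeeping, merely written out more explicitly than the paper's cross-reference.
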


\begin{proof}
This result follows from the latter half of the proof of Lemma \ref{lem:even-exact-seq}.	
\end{proof}
%

\begin{cor}\label{cor: morphism exact sequences}
	Let $j\geq 1$ and let $M= M_2(2j-1)$. Then we have the following diagram, where the rows are exact, in the category of $E(2)_*$-comodules, 
		\[
	\begin{tikzcd}[column sep = small]
		0\arrow[r] & \Sigma^{4j}\buu_j\arrow[r] & \buu_{2j}\arrow[r]& Q^{2j-1}\AE{1}\arrow[r] & \Sigma^{4j+5}\buu_{j-1}\arrow[r]& 0 \\
		0\arrow[r]&  0 \arrow[r] & M\arrow[u, hook] \arrow[r]& M\otimes E(\zeta_3)\arrow[r] \arrow[u,hook]& M\otimes \F_2\{\zeta_3\}\arrow[u,hook] \arrow[r] & 0
	\end{tikzcd}
	\]
\end{cor}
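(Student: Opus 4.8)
The plan is to construct the vertical maps explicitly and then verify commutativity of each square, using the descriptions of the horizontal maps that were already extracted in the proof of Lemma \ref{lem:even-exact-seq} and in the remark following it. The bottom row is the obvious exact sequence obtained by tensoring the short exact sequence $0\to \F_2\to E(\zeta_3)\to \F_2\{\zeta_3\}\to 0$ of $E(2)_*$-comodules with $M=M_2(2j-1)$ (recall $\zeta_3$ is primitive in $\E2E1$, so this is genuinely a sequence of comodules). The middle vertical map $M\otimes E(\zeta_3)\hookrightarrow Q^{2j-1}\AE1$ is the composite $\tau^{-1}$ followed by the projection, which is a comodule map by the preceding lemma (and an inclusion because, after identifying $M_2(2j-1)$ with $\tBPu{2}_{2j-1}$ via $\varphi_2$, the map $\tau$ realizes $E^0Q^{2j-1}\AE1 \cong \tBPu{2}_{2j-1}\otimes\E2E1$, and by Remark \ref{Ext-iso} we are free to replace $Q^{2j-1}\AE1$ by this associated graded).

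Next I would produce the left and right vertical maps. For the left square: the top map $\Sigma^{4j}\buu_j\to \buu_{2j}$ was identified in the final remark of \textsection\ref{subsection:3.1}'s successor as $\zeta_1^{2i_1}\zeta_2^{2i_2}\zeta_3^{i_3}\cdots\mapsto \zeta_1^a\zeta_2^{2i_1}\zeta_3^{2i_3}\cdots$ with $a=4j-\wt(\zeta_2^{2i_1}\zeta_3^{2i_3}\cdots)$; this is precisely the isomorphism $f_{2,2j}\colon\Sigma^{4j}\buu_j\xrightarrow{\cong} M_2(2j)\hookrightarrow \buu_{2j}$. So I take the left vertical map to be $0\to \Sigma^{4j}\buu_j$, trivially. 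Commutativity of the left square is then the statement that $M_2(2j)$ maps to $0$ under the composite $\buu_{2j}\to M\otimes E(\zeta_3)$, which is exactly the computation of $\ker\varphi = M_2(2j)$ carried out in the proof of Lemma \ref{lem:even-exact-seq}. For the right square: the top map $Q^{2j-1}\AE1\to \Sigma^{4j+5}\buu_{j-1}$ was shown there to be, on associated graded, induced by the cokernel identification $\coker(\varphi)\cong M_2(2j-1)\otimes\F_2\{\zeta_3\}$, i.e. $m\zeta_3\mapsto m\otimes\zeta_3$; composing with Corollary \ref{MtoN} gives $M_2(2j-1)\cong \Sigma^{4j-2}\buu_{j-1}$ shifted up by the degree $|\zeta_3|=7$, landing in $\Sigma^{4j+5}\buu_{j-1}$. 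The right vertical map $M\otimes\F_2\{\zeta_3\}\hookrightarrow Q^{2j-1}\AE1$ is again $\tau^{-1}$ followed by projection, restricted to the $\zeta_3$-part; that this is a split inclusion of comodules and that the right square commutes both follow from tracing through the identification of $\coker(\varphi)$ in that same proof.

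Concretely, then, the structure of the argument is: (i) cite the preceding lemma to get that $M\otimes E(\zeta_3)\to Q^{2j-1}\AE1$ is a comodule map; (ii) observe that the restrictions of this map to $M$ and to $M\otimes\F_2\{\zeta_3\}$ give comodule maps $M\hookrightarrow Q^{2j-1}\AE1$ and $M\otimes\F_2\{\zeta_3\}\hookrightarrow Q^{2j-1}\AE1$, so all three vertical maps on the right two squares are inclusions of comodules; (iii) the bottom row is exact by right-exactness of $-\otimes M$ applied to an exact sequence of $\F_2$-vector spaces (it is split as vector spaces), together with the comodule structure coming from primitivity of $\zeta_3$; (iv) the top row is \eqref{even_exact_sequence}; (v) commutativity of the middle square is a tautology (both composites are $\tau^{-1}$ followed by projection); commutativity of the left square is the containment $M_2(2j)=\ker\varphi$ from Lemma \ref{lem:even-exact-seq}; commutativity of the right square is the explicit description of $\coker\varphi$ from the same proof. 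I expect the only genuine subtlety — the ``main obstacle'' — to be bookkeeping the weight/suspension shifts so that the identifications $\Sigma^{4j}\buu_j\cong M_2(2j)$, $M_2(2j-1)\cong\Sigma^{4j+5}\buu_{j-1}$ are consistent with the maps as written, i.e. making sure the stated degrees on the bottom row's entry $M=M_2(2j-1)$ really do match both $\buu_{2j}$'s description and $\Sigma^{4j+5}\buu_{j-1}$'s; this is purely a matter of carefully applying Corollary \ref{MtoN} and the remark after Lemma \ref{lem:even-exact-seq}, with no new ideas required. Hence the proof is essentially the one-line citation given: the diagram is assembled from the maps already constructed in the proof of Lemma \ref{lem:even-exact-seq}, and commutativity is read off from the kernel and cokernel computations performed there.
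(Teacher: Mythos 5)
Your proposal is correct and is essentially the paper's own (implicit) argument: the corollary is assembled from the maps constructed in the proof of Lemma \ref{lem:even-exact-seq}, with the unnumbered lemma immediately preceding the corollary supplying the fact that $M_2(2j-1)\otimes E(\zeta_3)\to Q^{2j-1}\AE{1}$ is a comodule map, the bottom row coming from tensoring $0\to\F_2\to\E2E1\to\F_2\{\zeta_3\}\to 0$ with $M$, and commutativity of the two nontrivial squares read off from the kernel and cokernel computations for $\varphi$. A few of your descriptions are off, though none affects the argument and each is easily repaired: the left square is vacuous (its lower corners are $0$), so its commutativity is not ``$\ker\varphi=M_2(2j)$'' --- that fact is part of exactness of the top row, which you cite anyway; the right vertical arrow lands in $\Sigma^{4j+5}\buu_{j-1}$, not in $Q^{2j-1}\AE{1}$, and is the identification $M\otimes\F_2\{\zeta_3\}\cong\coker\varphi\cong\Sigma^{4j+5}\buu_{j-1}$ via Corollary \ref{MtoN}; and $\varphi_2$ carries $M_2(2j-1)$ isomorphically onto $\buu_{j-1}$, not onto $\tBPu{2}_{2j-1}$ (of which it is only the top weight summand), nor does Remark \ref{Ext-iso} license replacing $Q^{2j-1}\AE{1}$ by its associated graded in a comodule-level statement --- injectivity of the middle vertical map instead follows directly from weights: for $m\in M_2(2j-1)$ the element $m\zeta_3^{\epsilon}$ has $\zeta_3$-free part of weight $4j-2<4j$, hence lies outside $F^{2j}\AE{1}$ and survives to the quotient.
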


From this we deduce the analogue of Lemma 5.14 in \cite{BOSS}. The proof is similar to the one found in \cite{BOSS}.

\begin{cor}\label{v_2attaching}
	Consider the summand 
	\begin{equation*}
	\begin{split}
	v_0^{-1}\Ext_{E(1)_*}(\Sigma^{12j-2}\buu_{j-1}) &\subseteq v_0^{-1}\Ext_{E(1)_*}(\Sigma^{8j}\tBPu{2}_{2j-1})\\
	&\subseteq v_0^{-1}\Ext_{E(2)_*}(\Sigma^{8j}\buu_{2j}),
		\end{split}
	\end{equation*}
	generated over $\F_2[v_0^{\pm1},v_1]$ by the generators 
	\[
	\zeta_1^4\zeta_2^{2i_2}\zeta_3^{2i_3}\in \AE{2}
	\]
	where $i_2+2i_3 = 2j-1$. In the summand
	\begin{equation}\label{weirdsummand}
			v_0^{-1}\Ext_{E(2)_*}(\Sigma^{12j+5}\buu_{j-1}[1])\subseteq v_0^{-1}\Ext_{E(2)_*}(\Sigma^{8j}\buu_{2j})
	\end{equation}
	let $x_{k}$ for $(1\leq k\leq  j-1)$ denote the generator of \eqref{weirdsummand} corresponding to $\zeta_1^{2k}$. Then in the $E_\infty$-page of the spectral sequence \eqref{evenspecseq}, we have
	\[
	v_2 \zeta_1^4\zeta_2^{2i_2}\zeta_3^{2i_3} = x_{i_3}
	\]
\end{cor}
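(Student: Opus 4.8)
The plan is to track the generators $\zeta_1^4\zeta_2^{2i_2}\zeta_3^{2i_3}$ through the exact sequence \eqref{even_exact_sequence} and identify the $v_2$-multiplication as the connecting map from the third term to the fourth term of that four-term sequence. Recall that a four-term exact sequence $0\to A\to B\to C\to D\to 0$ of $E(2)_*$-comodules is spliced out of two short exact sequences $0\to A\to B\to Z\to 0$ and $0\to Z\to C\to D\to 0$, where $Z$ is the image of $C\to D$ (equivalently the cokernel of $A\to B$). The differential in the induced spectral sequence \eqref{evenspecseq} that governs the attaching of $\Sigma^{12j+5}\buu_{j-1}[1]$ to the rest is precisely the composite of the connecting homomorphism $\Ext_{E(2)_*}(\Sigma^{4j+5}\buu_{j-1})\to \Ext_{E(2)_*}(Q^{2j-1}\AE1)$ (shifted by $[1]$) followed by whatever identifies a piece of $\Ext_{E(2)_*}(Q^{2j-1}\AE1)$ with a piece of $\Ext_{E(2)_*}(\Sigma^{8j}\buu_{2j})$; but for the purposes of computing a $v_2$-extension we only need the connecting map of the \emph{second} short exact sequence $0\to Z\to Q^{2j-1}\AE1\to \Sigma^{4j+5}\buu_{j-1}\to 0$, since $v_2$ acts on $\Ext_{E(2)_*}$ via the $\zeta_3$-Bockstein as established in the proposition on $\E2E1$.

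First I would use Corollary \ref{cor: morphism exact sequences} to reduce to a question about the bottom row of that diagram, i.e. the sequence $0\to M\to M\otimes E(\zeta_3)\to M\otimes\F_2\{\zeta_3\}\to 0$ with $M=M_2(2j-1)$. Under the identification of Remark \ref{Ext-iso} (that $Q^{2j-1}\AE1$ behaves like $\tBPu2_{2j-1}\otimes\E2E1$), the summand $\Sigma^{4j+5}\buu_{j-1}$ of the cokernel corresponds to the $\zeta_3$-component, and the connecting homomorphism for $0\to M\to M\otimes E(\zeta_3)\to \Sigma^7 M\to 0$ is, by the same cobar computation as in the $\E2E1$-proposition, multiplication by $v_2$ (the seven-fold suspension coming from $|\zeta_3|=7$). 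So an element in the $\zeta_3$-part, namely one detected by $\zeta_1^4\zeta_2^{2i_2}\zeta_3^{2i_3}\otimes\zeta_3$, maps under $\partial$ to the class detected by $\zeta_1^4\zeta_2^{2i_2}\zeta_3^{2i_3}$ in $\Ext^{0}_{E(2)_*}$ of the sub-comodule, but now \emph{multiplied by $v_2$} in the $\Ext$ of $Q^{2j-1}\AE1$; chasing this through the vertical inclusion into $\buu_{2j}$ lands it on the generator $x_{i_3}$ of \eqref{weirdsummand}. Concretely, one checks that the element $\zeta_1^4\zeta_2^{2i_2}\zeta_3^{2i_3}$ with $i_2+2i_3=2j-1$ sits in the copy of $v_0^{-1}\Ext_{E(1)_*}(\Sigma^{12j-2}\buu_{j-1})$ described in the statement — this is the image of the monomial $\zeta_1^{2i_3}$ under the inductive identifications of Lemma \ref{lem: BG2toBG1toBG0} together with the $\varphi_i$-shift — and that the $x_{i_3}$ generator of the $[1]$-shifted summand is exactly the $v_2$-Bockstein target of that class.

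The bookkeeping step I would handle carefully is the weight/degree matching: one must verify that $v_2\cdot\zeta_1^4\zeta_2^{2i_2}\zeta_3^{2i_3}$ and $x_{i_3}$ land in the same $(s,t)$ bidegree after all the suspensions. The element $\zeta_1^4\zeta_2^{2i_2}\zeta_3^{2i_3}$ has weight $4+2\cdot 2i_2+2\cdot 4i_3 = 4(1+i_2+2i_3) = 8j$, consistent with its membership in $M_2(4j)\cong\Sigma^{8j}\buu_{2j}$; the generator $x_{i_3}$ corresponds to $\zeta_1^{2i_3}$ inside $\Sigma^{12j+5}\buu_{j-1}[1]$, and one computes that multiplying by $v_2$ (bidegree $(1,7)$) carries the first into the Adams filtration and internal degree of the second — this is where the mysterious extra suspension $\Sigma^{12j+5}$ and the $[1]$ shift in \eqref{even_exact_sequence} pay off, because $4j+5 = (8j)-(4j-1)$ and the homological shift accounts for the $\partial$ raising filtration by one.

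The main obstacle, I expect, is not the degree count but rather showing that the connecting map genuinely hits the \emph{generator} $x_{i_3}$ rather than some $\F_2[v_0]$-multiple of it or a sum involving other generators. This requires knowing that the composite map $M\otimes\F_2\{\zeta_3\}\to \Sigma^{4j+5}\buu_{j-1}$ is surjective onto the relevant summand with the $\zeta_1^{2i_3}$-generator hit exactly once, which in turn follows from the explicit description of the cokernel map in the proof of Lemma \ref{lem:even-exact-seq} (the isomorphism $\coker\varphi\cong M_2(2j-1)\otimes\F_2\{\zeta_3\}$ sending $m\zeta_3\mapsto m\otimes\zeta_3$) combined with the inductive identification of generators in Proposition \ref{inductive_generators}. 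I would cite the parallel argument in \cite{BOSS} (their Lemma 5.14) for the organizational structure, and spell out only the points where the $\tBP2$ weights differ from the $\bo$ case — principally the factor of $4$ in $\wt(\zeta_2^2),\wt(\zeta_3^2)$ versus the $\bo$-analogue, which is exactly what forces $i_2+2i_3=2j-1$ rather than a different linear relation.
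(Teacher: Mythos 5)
Your proposal is correct and takes essentially the same route as the paper's proof: you push the generator through the morphism of exact sequences of Corollary \ref{cor: morphism exact sequences}, identify the connecting homomorphism of the bottom row $0\to M_2(2j-1)\to M_2(2j-1)\otimes E(\zeta_3)\to M_2(2j-1)\otimes\F_2\{\zeta_3\}\to 0$ with multiplication by $v_2$ via the cobar computation, and then use the explicit cokernel identification from Lemma \ref{lem:even-exact-seq} together with the $\varphi$-shift of coordinates to see that the target is exactly the generator $x_{i_3}$, which is precisely the paper's argument. The only quibble is the parenthetical identity $4j+5=(8j)-(4j-1)$, which is off (the suspension arises as $(4j-2)+7$, i.e.\ from $M_2(2j-1)\otimes\F_2\{\zeta_3\}$ with $|\zeta_3|=7$); the bidegree check does go through and this slip does not affect the argument.
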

\begin{proof}
	Consider the generator $\zeta_1^4\zeta_2^{2i_2}\zeta_3^{2i_3}$ in $v_0^{-1}\Ext_{E(1)_*}(\Sigma^{12j-2}\buu_{j-1})$. Recall we are making the identifications 
	\[
	\Sigma^{8j}\buu_{2j}\cong M_2(4j),
	\]
	and under this identification, the monomial $\zeta_1^4\zeta_2^{2i_2}\zeta_3^{2i_3}$ in $M_2(4j)$ corresponds to the generator $\zeta_1^{2i_2}\zeta_2^{2i_3}$ in $\Sigma^{8j}\buu_{2j}$. It then follows from Corollary \ref{cor: morphism exact sequences} that this $v_0$-inverted cell is attached to the $v_0$-inverted cell $\zeta_1^{2i_2}\zeta_2^{2i_3}\otimes \zeta_3$ with attaching map $v_2$. The term $\Sigma^{4j+5}\buu_{j-1}$ is actually being identified as 
	\[
	\Sigma^{4j-2}\buu_{j-1}\otimes \F_2\{\zeta_3\}\cong M_2(2j-1)\otimes \F_2\{\zeta_3\}.
	\]
	Under this identification $\zeta_1^{2i_2}\zeta_2^{2i_3}\otimes \zeta_3$ is being identified with $\zeta_1^{2i_3}\otimes \zeta_3$. Thus $\zeta_1^4\zeta_2^{2i_2}\zeta_3^{2i_3}\otimes \zeta_3$ corresponds to the generator in $\Ext_{E(2)_*}(\Sigma^{12j+5}\buu_{j-1})$ given by $\zeta_1^{2i_3}$. 
\end{proof}



We will now discuss the $v_2$-extensions concerning the other generators in the summand in \eqref{BPsummand_even},
\[
v_0^{-1}\Ext_{E(1)_*}(\Sigma^{8j}\tBPu{2}_{2j-1})\subseteq v_0^{-1}\Ext_{E(2)}(\Sigma^{8j}\buu_{2j}),
\]
and the generators of the summand
\[
v_0^{-1}\Ext_{E(1)_*}(\Sigma^{8j+4}\tBPu{2}_{2j-1})\subseteq v_0^{-1}\Ext_{E(2)_*}(\Sigma^{8j+4}\buu_{2j+1})
\]
in \eqref{oddspecseq}. That is the monomials of the form 
\[
\zeta_1^a\zeta_2^{2i_2}\zeta_3^{2i_3}\in \AE{2}
\]
where $i_2+2i_3\leq 2j-1$ and $a = 8j-4i_2-8i_3$ and $a>4$ (resp. $a = 8j+4-4i_2-8i_3$).

The notion of length discussed in section 2 induces an increasing filtration of $\AE{2}$ by $E(2)_*$-comodules,
\[
G^\ell \AE{2}:= E(2)\{m\in \AE{2}\mid \ell(m)\leq \ell\},
\]
where $m$ is varying over monomials. Since the action by $Q_i$ lowers length by exactly one, it follows that the filtration quotients are trivial $E(2)_*$-comodules
\[
G^\ell/G^{\ell-1} = \F_2\{m\mid \ell(m)=\ell\}.
\]
Applying $\Ext_{E(2)_*}$ gives a spectral sequence converging to $\Ext_{E(2)}(\AE{2})$. This spectral sequence is of the form
\[
E_1^{s,t,\ell} = E_0\AE{2}\otimes \F_2[v_0,v_1,v_2]\implies \Ext_{E(2)_*}(\AE{2})
\]
and we call it the \emph{length spectral sequence}. By examining the induced short exact sequences in cobar complexes, one easily derives that the $d_1$-differential is
\[
d_1(x) = v_0(Q_0x)+v_1(Q_1x)+v_2(Q_2x)
\]
which implies that $\Ext_{E(2)_*}(\AE2)$ has the following class of relations,
\[
v_2(Q_2x) = v_0(Q_0x)+v_1(Q_1x).
\]
In particular, if $m$ is a length 0 monomial, then we obtain the following relations in $\Ext$ from $d_1(\zeta_4m)$:
\begin{equation}\label{basic relations}
	v_2(\zeta_1^8m) = v_1(\zeta_2^4m)+v_0(\zeta_3^2m).
\end{equation}
This suggests that given a monomial $m$ of length 0 in 
\[
\Ext^0_{E(2)_*}(\Sigma^{8j+4\epsilon}\buu_{2j+\epsilon}),
\] 
we should have the relations
\begin{equation}\label{eqn:basicrel}
v_2(m) = v_0(\zeta_1^{-8}\zeta_3^2m)+v_1(\zeta_1^{-8}\zeta_2^4m).
\end{equation}
Of course, under the hypothesis of Corollary \ref{v_2attaching}, this does not make sense, but the following observation shows that this is a possibility for all other monomials in the summands 
\[
v_0^{-1}\Ext_{E(2)_*}(\Sigma^{8j+4\epsilon}\tBPu{2}_{2j-1}).
\]
\begin{lem}
For the generators
\[
\zeta_1^a\zeta_2^{2i_2}\zeta_3^{2i_3}\in v_0^{-1}\Ext_{E(1)}(\Sigma^{8j}\tBPu{2}_{2j-1})\subseteq v_0^{-1}\Ext_{E(2)}(\Sigma^{8j}\buu_{2j})
\]
the exponent $a$ is always divisible by 4 and $a\geq 4$. For the generators 
\[
\zeta_1^a\zeta_2^{2i_2}\zeta_3^{2i_3}\in v_0^{-1}\Ext_{E(1)}(\Sigma^{8j+4}\tBPu{2}_{2j-1})\subseteq v_0^{-1}\Ext_{E(2)}(\Sigma^{8j+4}\buu_{2j+1})
\]
the exponent $a$ is always divisible by 4 and $a\geq 8$.
\end{lem}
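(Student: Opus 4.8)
The plan is to extract the value of $a$ directly from Lemma~\ref{BPgenerators} and then factor out a $4$; essentially no new work is needed beyond bookkeeping. Recall that Lemma~\ref{BPgenerators} identifies the generators of the summand $v_0^{-1}\Ext_{E(1)}(\Sigma^{8j}\tBPu{2}_{2j-1})\subseteq v_0^{-1}\Ext_{E(2)}(\Sigma^{8j}\buu_{2j})$, under the standing identification $\Sigma^{8j}\buu_{2j}\cong M_2(4j)$, with the monomials $\zeta_1^a\zeta_2^{2i_2}\zeta_3^{2i_3}\in\AE{2}$ satisfying $i_2+2i_3\le 2j-1$ and $a=8j-4i_2-8i_3$, and that in the odd case the only change is to $a=8j+4-4i_2-8i_3$, the constraint $i_2+2i_3\le 2j-1$ being unchanged. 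This constraint is not an extra hypothesis: it is exactly the indexing set for $v_0^{-1}\Ext_{E(1)}(\tBPu{2}_{2j-1})$ coming from the computation $M_*(\tBPu{2}_{2j-1};Q_0)=\F_2\{\zeta_1^{2i}\zeta_2^{2i'}\mid i+2i'\le 2j-1\}$ recorded earlier.

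First I would handle the even case: write $a=8j-4i_2-8i_3=4(2j-i_2-2i_3)$, which is manifestly divisible by $4$; then from $i_2+2i_3\le 2j-1$ one gets $2j-i_2-2i_3\ge 1$, so $a\ge 4$. Next the odd case: write $a=8j+4-4i_2-8i_3=4(2j+1-i_2-2i_3)$, again divisible by $4$; and $i_2+2i_3\le 2j-1$ now yields $2j+1-i_2-2i_3\ge 2$, so $a\ge 8$.

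If one prefers a weight-theoretic argument that avoids quoting the exact formula for $a$: under the identifications $\Sigma^{8j}\buu_{2j}\cong M_2(4j)$ and $\Sigma^{8j+4}\buu_{2j+1}\cong M_2(4j+2)$ (which come from Corollary~\ref{MtoN}), the monomial $\zeta_1^a\zeta_2^{2i_2}\zeta_3^{2i_3}$ lies in $M_2(4j)$, resp. $M_2(4j+2)$, hence has weight $8j$, resp. $8j+4$. Since $\wt(\zeta_1)=1$, $\wt(\zeta_2)=2$, $\wt(\zeta_3)=4$, this forces $a+4i_2+8i_3=8j$ (resp. $8j+4$), so $4\mid a$ in both cases; the lower bound on $a$ then amounts to saying that the class descends from $v_0^{-1}\Ext_{E(1)}(\tBPu{2}_{2j-1})$, i.e. that $\wt(\zeta_1^{2i_2}\zeta_2^{2i_3})\le 2(2j-1)$ in $\AE{1}$.

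I do not expect any genuine obstacle here: the lemma is pure arithmetic. The one point to watch is keeping the suspension shifts and the isomorphisms $M_2(2k)\cong\Sigma^{4k}\buu_k$ straight so that the weight count is the intended one, and remembering that the inequality $i_2+2i_3\le 2j-1$ is inherited from the source $v_0^{-1}\Ext_{E(1)}(\tBPu{2}_{2j-1})$ rather than imposed by hand.
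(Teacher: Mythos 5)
Your proof is correct and is essentially the paper's argument: the paper likewise deduces the lemma directly from the formula for $a$ and the bound $i_2+2i_3\le 2j-1$ in Lemma~\ref{BPgenerators}, exactly as in your factor-out-a-$4$ computation. The supplementary weight-theoretic remark is a fine sanity check but adds nothing needed.
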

\begin{proof}
	This is a direct consequence of the equation for $a$ and the bound on $i_2+2i_3$ in Lemma \ref{BPgenerators}
\end{proof}

\begin{rmk}\label{rmk: generators from where}
	If $m$ is a monomial generator of $v_0^{-1}\Ext_{E(2)_*}(\Sigma^{8j}\buu_{2j})$  which has an instance of a power of $\zeta_1^4$, then $m$ must originate from the summand \eqref{BPsummand_even}. If there is no occurrence of a $\zeta_1$, then the monomial must originate from the inductive summand \eqref{even-inductive-summand}.
	
	Similarly, if $m$ is a monomial generator of $v_0^{-1}\Ext_{E(2)_*}(\Sigma^{8j+4}\buu_{2j+1})$ which has an instance of $\zeta_1$ whose power is greater than 8, then it must originate from \eqref{BPsummand-odd}. If it has an instance of $\zeta_1^4$ or no instance of a power of $\zeta_1$, then it must originate from the inductive summand \eqref{odd-inductive-summand}. All of this follows from Lemmas \ref{BPgenerators} and \ref{inductive_generators}.
\end{rmk}

Let us now explain what will follow over the next several pages. Since the summands \eqref{BPsummand_even} and \eqref{BPsummand-odd} are only given to us, via our inductive scheme, as a module over $\F_2[v_0^{\pm}, v_1]$, we need to determine what the $v_2$-multiples of the generators are. Given a monomial generator $m=\zeta_1^a\zeta_2^{2i_2}\zeta_3^{2i_3}$, the relation \eqref{eqn:basicrel} suggests that we should have the relation 
\begin{equation}\label{desiredrel}
v_2\zeta_1^a\zeta_2^{2i_2}\zeta_3^{2i_3} = v_1\zeta_1^{a-8}\zeta_2^{2i_2+4}\zeta_3^{2i_3} + v_0\zeta_1^{a-8}\zeta_2^{2i_2}\zeta_3^{2i_3+2}. 
\end{equation}
This relation cannot (and does not) occur when $a=4$; instead there is a hidden $v_2$-extension from this generator to an element $x_{i_3}$ in the summand $v_0^{-1}\Ext_{E(2)_*}(\Sigma^{12j+5}\buu_{j-1}[1])$, as was shown by Corollary \ref{v_2attaching} above. The preceding lemma tells us that this relation potentially makes sense for all other generators of the summand \eqref{BPsummand_even} (i.e. all those with $a\geq 8$) and for all the generators in the summand $\eqref{BPsummand-odd}$. We want to check that the two monomials appearing on the right hand side are themselves generators for $v_0^{-1}\Ext_{E(2)_*}(\Sigma^{8j+4\epsilon}\buu_{2j+\epsilon})$ arising through our inductive scheme. This is the purpose of the discussion below.  

We will break into several different cases and will make repeated use of Remark \ref{rmk: generators from where}. We will deal with $\Sigma^{8j}\buu_{2j}$ and $\Sigma^{8j+4}\buu_{2j+1}$ separately. We begin by focusing on the even case first. In this case, we will break into further subcases; when $a>8$ and when $a=8$. In the former case, the two terms on the right hand side have instances of a power of $\zeta_1^4$. It follows from \ref{rmk: generators from where} that these monomials must arise as generators from the summand \eqref{BPsummand_even}. In the latter case, neither term on the right has an instance of a power of $\zeta_1$, and so would have to arise as a generator from the inductive term \eqref{even-inductive-summand}. We show this below.


\begin{prop}
	Consider a monomial generator $\zeta_1^{a}\zeta_2^{2i_2}\zeta_3^{2i_3}$ in 
	\[
	v_0^{-1}\Ext_{E(1)}(\Sigma^{8j}\tBPu{2}_{2j-1})
	\]
	and suppose that $a> 8$. Then the monomials
\[
\zeta_1^{a-8}\zeta_2^{2i_2}\zeta_3^{2i_3+2}, \zeta_1^{a-8}\zeta_2^{2i_2+4}\zeta_3^{2i_3}
\]
are generators of the summand \eqref{BPsummand_even}, and hence generators of $v_0^{-1}\Ext_{E(2)_*}(\Sigma^{8j}\buu_{2j})$.
\end{prop}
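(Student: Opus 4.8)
The plan is to check directly that each of the two monomials satisfies the numerical constraints which, by Lemma~\ref{BPgenerators}, characterise the generators of the summand \eqref{BPsummand_even}. Recall that that lemma identifies the $\F_2[v_0^{\pm 1},v_1]$-generators of \eqref{BPsummand_even} with the monomials $\zeta_1^{b}\zeta_2^{2k_2}\zeta_3^{2k_3}$ indexed by pairs $(k_2,k_3)$ of nonnegative integers satisfying $k_2+2k_3\le 2j-1$, where the exponent of $\zeta_1$ is then forced to equal $b=8j-4k_2-8k_3$. So the entire task reduces to exhibiting, for each target monomial, an admissible index pair producing the correct $\zeta_1$-exponent.

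First I would extract the consequence of the hypothesis $a>8$. The exponent $a=8j-4i_2-8i_3$ of the given generator is divisible by $4$, so $a>8$ in fact forces $a\ge 12$; rewriting, $4i_2+8i_3\le 8j-12$, that is,
\[
i_2+2i_3\le 2j-3.
\]
Now for $\zeta_1^{a-8}\zeta_2^{2i_2}\zeta_3^{2i_3+2}$ I take $(k_2,k_3)=(i_2,i_3+1)$: then $k_2+2k_3=i_2+2i_3+2\le 2j-1$, so the index is admissible, and $8j-4k_2-8k_3=(8j-4i_2-8i_3)-8=a-8$, matching the exponent of $\zeta_1$. Likewise for $\zeta_1^{a-8}\zeta_2^{2i_2+4}\zeta_3^{2i_3}$ I take $(k_2,k_3)=(i_2+2,i_3)$: again $k_2+2k_3=i_2+2i_3+2\le 2j-1$ and $8j-4k_2-8k_3=a-8$. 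Hence both monomials occur in the generating set of Lemma~\ref{BPgenerators}, and since \eqref{BPsummand_even} is a direct summand of $v_0^{-1}\Ext_{E(2)_*}(\Sigma^{8j}\buu_{2j})$ in the inductive decomposition, they are in particular among the generators of the latter, which is exactly what is needed for the relation \eqref{desiredrel} to be meaningful.

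I do not anticipate a genuine obstacle here: the only point requiring care is that the weight-type bound $i_2+2i_3\le 2j-1$ has precisely enough slack to absorb the shift, since each of the two substitutions (replacing $\zeta_2^{2i_2}$ by $\zeta_2^{2i_2+4}$, or $\zeta_3^{2i_3}$ by $\zeta_3^{2i_3+2}$) raises $k_2+2k_3$ by exactly $2$, and it is the divisibility of $a$ by $4$ that upgrades the hypothesis $a>8$ to the sharp inequality $a\ge 12$ used above. If one prefers to avoid invoking that divisibility, one may simply read the hypothesis as $a\ge 12$, which is all that the argument uses.
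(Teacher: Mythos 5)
Your proof is correct and follows essentially the same route as the paper: both reduce the statement to the numerical criterion of Lemma~\ref{BPgenerators} and verify that the bound $k_2+2k_3\le 2j-1$ survives the shift by $2$, the paper deducing this from $a>8$ via $i_2+2i_3<2j-2$ while you equivalently upgrade $a>8$ to $a\ge 12$ using divisibility by $4$. The explicit check that the $\zeta_1$-exponent works out to $a-8$ (i.e.\ the weight remains $8j$) is stated more briefly in the paper but is the same verification.
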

\begin{proof}
Under the hypothesis of the proposition, we have 
\[
i_2+2i_3\leq 2j-1
\]
and
\[
a= 8j-4i_2-8i_3
\]
Since the weights of the proposed monomials are still $8j$, it follows from Proposition \ref{BPgenerators} that it needs to be checked that 
\[
i_2+2i_3+2\leq 2j-1.
\]
Since
\[
a= 8j-4i_2-8i_3>8
\]
we have
\[
2j-i_2-2i_3>2
\]
and hence
\[
i_2+2i_3<2j-2.
\]
Therefore, 
\[
i_2+2i_3+2<2j
\]
which proves the proposition.
\end{proof}

We now deal with the case when $a=8$. This case turns out to be significantly more tedious than the previous proposition. We briefly explain the strategy. Our goal is to show that the monomials $\zeta_2^{2i_2+4}\zeta_3^{2i_3}$ and $\zeta_2^{2i_2}\zeta_3^{2i_3+2}$ are generators of $v_0^{-1}\Ext_{E(2)_*}(\Sigma^{8j}\buu_{2j})$. Since these monomials have no instance of a $\zeta_1$, if they are to be generators, they would have to arise from the inductive term $v_0^{-1}\Ext_{E(2)_*}(\Sigma^{12j}\buu_j)$. Thus, it must be shown that $\zeta_1^{2i_2+4}\zeta_2^{2i_3}$ and $\zeta_1^{2i_2}\zeta_2^{2i_3+2}$ are generators of $v_0^{-1}\Ext_{E(2)_*}(\Sigma^{4j}\buu_j)$. To do this, we will have to analyze the inductive exact sequences used to compute this Ext group. This will depend on whether $j$ is even or odd. Consequently, the following proposition must be separated into several distinct subcases. 

\begin{prop}\label{crossing-extensions-even}
Consider a monomial generator $\zeta_1^{8}\zeta_2^{2i_2}\zeta_3^{2i_3}$ in the summand $v_0^{-1}\Ext_{E(1)}(\Sigma^{8j}\tBPu{2}_{2j-1})$. Then, the monomials $\zeta_2^{2i_2}\zeta_3^{2i_3+2}, \zeta_2^{2i_2+4}\zeta_3^{2i_3}$ are generators in the summand \eqref{even-inductive-summand}, $v_0^{-1}\Ext_{E(2)}(\Sigma^{12j}\buu_{j})$.
\end{prop}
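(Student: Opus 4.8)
The plan is to reduce the statement to a claim about $v_0^{-1}\Ext_{E(2)}(\Sigma^{4j}\buu_j)$ and then unwind one further step of the inductive resolution. By Proposition~\ref{inductive_generators}, under the identification $\Sigma^{8j}\buu_{2j}\cong M_2(4j)$ the generators of the summand \eqref{even-inductive-summand} are exactly the ``shifts'' $\zeta_k\mapsto\zeta_{k+1}$ of the generators of $v_0^{-1}\Ext_{E(2)}(\Sigma^{4j}\buu_j)$, the latter recorded in the coordinates coming from the identification $\Sigma^{4j}\buu_j\cong M_2(2j)$. So it is enough to prove that $\zeta_1^{2i_2}\zeta_2^{2i_3+2}$ and $\zeta_1^{2i_2+4}\zeta_2^{2i_3}$ are $\F_2[v_0^{\pm1},v_1]$-module generators of $v_0^{-1}\Ext_{E(2)}(\Sigma^{4j}\buu_j)$. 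The hypothesis that our original generator has $\zeta_1$-exponent $a=8$ forces, via Lemma~\ref{BPgenerators}, the relation $i_2+2i_3=2j-2$; in particular $i_2$ is even, so both of the exponents $2i_2$ and $2i_2+4$ are divisible by $4$, and a weight count confirms that both monomials lie in $M_2(2j)$ and hence genuinely name elements of $\Sigma^{4j}\buu_j$.

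The argument then splits on the parity of $j$. If $j=2j'$, then $v_0^{-1}\Ext_{E(2)}(\Sigma^{4j}\buu_j)=v_0^{-1}\Ext_{E(2)}(\Sigma^{8j'}\buu_{2j'})$ is built from the exact sequence \eqref{even_exact_sequence}, and its generators are classified by Lemma~\ref{BPgenerators}, Proposition~\ref{inductive_generators}, and Remark~\ref{rmk: generators from where}. I would match $\zeta_1^{2i_2+4}\zeta_2^{2i_3}$ against a generator of the $\tBPu{2}$-summand \eqref{BPsummand_even} at level $j'$ by taking its $\zeta_3$-exponent to be $0$ and its $\zeta_1$-exponent to be $c=2i_2+4$, then verifying the identity $c=8j'-4i_3$ and the inequality $i_3\le 2j'-1$ (the latter from $i_2\ge 0$); since $c$ is divisible by $4$ and $c\ge 4$, this really does name one of the generators listed in Lemma~\ref{BPgenerators}. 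The same matching handles $\zeta_1^{2i_2}\zeta_2^{2i_3+2}$ whenever $i_2>0$ (now $c=2i_2=8j'-4i_3-4\ge 4$, and the required inequality $i_3\le 2j'-2$ follows from $i_2>0$). When $i_2=0$ the monomial $\zeta_2^{2i_3+2}=\zeta_2^{4j'}$ carries no $\zeta_1$, so by Remark~\ref{rmk: generators from where} it comes from \eqref{even-inductive-summand}, and by Proposition~\ref{inductive_generators} the task reduces to checking that $\zeta_1^{4j'}$ is a generator of $v_0^{-1}\Ext_{E(2)}(\Sigma^{4j'}\buu_{j'})$; but under $\varphi_2\colon M_2(2j')\to\Sigma^{4j'}\buu_{j'}$ this monomial is the bottom class $1$ of $\buu_{j'}$, which is always among the generators produced by the inductive scheme.

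The odd case $j=2j'+1$ is handled in the same spirit using the exact sequence \eqref{odd_exact_sequence}, the summand \eqref{BPsummand-odd} (whose generators have $\zeta_1$-exponent $\ge 8$), and the summand \eqref{odd-inductive-summand} (whose generators are shifts of generators of $v_0^{-1}\Ext_{E(2)}(\Sigma^{4j'}\buu_{j'})$ multiplied by $\zeta_1^4$ or $\zeta_2^2$); the constraint now reads $i_2+2i_3=4j'$. A monomial whose relevant $\zeta_1$-exponent is $\ge 8$ matches a generator of \eqref{BPsummand-odd} after the analogous numerical check. When that exponent is smaller it is $4$ or $0$ (which happens only for small $i_2$), so by Remark~\ref{rmk: generators from where} the monomial lies in \eqref{odd-inductive-summand}; the constraint then forces the associated $\Sigma^{4j'}\buu_{j'}$-monomial to be $\zeta_1^{4j'}$, i.e.\ the bottom class of $\buu_{j'}$, and we conclude as before.

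No genuinely new idea seems to be required; the main obstacle is bookkeeping. One has to keep straight the re-indexing $j\leftrightarrow j'$, the mod-$4$ divisibility of the $\zeta_1$-exponents, the size thresholds ($a\ge 4$ in the even summand versus $a\ge 8$ in the odd one) that decide whether a monomial sits directly in a $\tBPu{2}$-summand or has to be fed back one level into the inductive exact sequences, and the translation between $M_2(\bullet)$-monomial coordinates and $\buu_\bullet$-coordinates via $\varphi_2$. Since all the dictionaries needed for this are already assembled in Lemma~\ref{BPgenerators}, Proposition~\ref{inductive_generators}, and Remark~\ref{rmk: generators from where}, the proof should amount to a (somewhat lengthy) verification rather than requiring any new construction.
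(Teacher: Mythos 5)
Your proposal is correct and takes essentially the same route as the paper's proof: reduce via Proposition~\ref{inductive_generators} to showing that $\zeta_1^{2i_2+4}\zeta_2^{2i_3}$ and $\zeta_1^{2i_2}\zeta_2^{2i_3+2}$ are generators of $v_0^{-1}\Ext_{E(2)_*}(\Sigma^{4j}\buu_j)$, then split on the parity of $j$ and on the size of the $\zeta_1$-exponent, matching against the $\tBPu{2}$-summand via the numerical conditions of Lemma~\ref{BPgenerators} or feeding back into the inductive summand, where the class in question is the bottom class $\zeta_1^{4\lfloor j/2\rfloor}$. Your numerical checks, including those in the sketched odd case, all agree with the paper's case-by-case verification.
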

\begin{proof}
	In order to show these monomials are generators for \eqref{even-inductive-summand}, it needs to be shown that $\zeta_1^{2i_2+4}\zeta_2^{2i_3}$ and $\zeta_1^{2i_2}\zeta_2^{2i_3+2}$ are generators of $v_0^{-1}\Ext_{E(2)_*}(\Sigma^{4j}\buu_{j})$. The proof will be broken up into several different cases. We will begin with the case in which $i_2\neq 0$. 
	
	Assume that $i_2\neq 0$, we will consider two further sub-cases. Let $k = \floor{j/2}$. Suppose first that $2k=j$. In this case, there is the exact sequence
	\begin{multline*}
			0\to \Sigma^{12k}\buu_k\to \Sigma^{8k}\buu_j\to \Sigma^{8k}\E2E1\otimes \tBPu{2}_{2k-1}\\ 
			\to \Sigma^{12k+5}\buu_{k-1}\to 0,
	\end{multline*}
	and for the monomials to be generators, they would necessarily have to be generators of $v_0^{-1}\Ext_{E(1)_*}(\Sigma^{8k}\tBPu{2}_{2k-1})$. So we need to show the following 
	\begin{enumerate}
		\item $2i_2+4i_3+4=8k=4j$
		\item $i_3+1\leq 2k-1=j-1$.
	\end{enumerate}
	Since $\zeta_1^8\zeta_2^{2i_2}\zeta_3^{2i_3}$ is a generator of the summand 
	\[
	v_0^{-1}\Ext_{E(1)_*}(\Sigma^{8j}\tBPu{2}_{2j-1}),
	\]
	we know that 
	\[
	8+4i_2+8i_3 = 8j
	\]
	which shows the first condition. Observe that this implies $i_2$ is even. From this equality, we can write
	\[
	8i_3+8 = 8j-4i_2
	\]
	which dividing by 8 and using the fact that $i_2$ is even shows
	\[
	i_3+1 = j-\frac{i_2}{2}\leq j-1,
	\]
	showing the second condition. So in this case, $\zeta_1^{2i_2+4}\zeta_2^{2i_3}$ and $\zeta_1^{2i_2}\zeta_2^{2i_3+2}$ are both generators of $v_0^{-1}\Ext_{E(2)_*}(\Sigma^{4j}\buu_j)$. 
	
	Now consider the case when $j$ is odd, i.e. $j=2k+1$. Then $2k=j-1$ and we have a sequence
	\[
	0\to \Sigma^{12k}\buu_k\otimes \buu_1\to \Sigma^{8k+4}\buu_j\to \Sigma^{8k+4} \E2E1\otimes \tBPu{2}_{2k-1}\to 0.
	\]
	We will consider the monomials $\zeta_1^{2i_2+4}\zeta_2^{2i_3}$ and $\zeta_1^{2i_2}\zeta_2^{2i_3+2}$ separately. Consider first $\zeta_1^{2i_2+4}\zeta_2^{2i_3}$. Since we are assuming $i_2\neq 0$, for this to be a generator of $v_0^{-1}\Ext_{E(2)_*}({\Sigma^{4j}\buu_j})$, it would have to be a generator originating in the summand 
	\[
	v_0^{-1}\Ext_{E(1)_*}(\Sigma^{4j}\tBPu{2}_{2k-1}).
	\] 
	So we need to show 
	\begin{enumerate}
	\item $2i_2+4+4i_3 = 8k+4 =4j$
	\item $i_3\leq 2k-1 = j-2$
	\end{enumerate}
	The first condition follows as before. We can again write
	\[
	8i_3 = 8j-4i_2-8
	\]
	which dividing by 8 gives
	\[
	i_3 = j-1-\frac{i_2}{2} \leq j-2
	\]
	since $i_2\neq 0$ and $i_2$ is even.
	
	So consider now the monomial $\zeta_1^{2i_2}\zeta_2^{2i_3+2}$. There are two further sub-cases to consider for this monomial. Suppose first that $i_2>2$. Then if this monomial is to be a generator of $v_0^{-1}\Ext_{E(2)_*}(\Sigma^{8k+4}\buu_j)$, it would have to originate from the summand 
	\[
	v_0^{-1}\Ext_{E(1)_*}(\Sigma^{8k+4}\tBPu{2}_{2k-1}).
	\] 
	We thus need to check that 
	\begin{enumerate}
		\item $2i_2+4i_3+4 = 8k+4 =4j$
		\item $i_3+1\leq 2k-1 = j-2$.
	\end{enumerate}
	The first condition follows as before. For the second condition, note 
	\[
	8i_3 = 8j-4i_2-8
	\]
	gives
	\[
	i_3 = j-1-\frac{i_2}{2}.
	\]
	Since we are assuming $i_2>2$, then $i_2/2> 1$, which implies
	\[
	i_3\leq j-3
	\]
	as desired. So consider then the case when $i_2=2$. In this case, for the monomial $\zeta_1^4\zeta_2^{2i_3+2}$ to be a generator of $v_0^{-1}\Ext_{E(2)}(\Sigma^{8k+4}\buu_j)$, it would have to originate from the summand $v_0^{-1}\Ext_{E(2)_*}(\Sigma^{12k}\buu_k\otimes \buu_1)$. Thus we need to check that $\zeta_1^{2i_3+2}$ is a generator of $v_0^{-1}\Ext_{E(2)_*}(\Sigma^{4k}\buu_k)$. For this to be true, it needs to be the case that 
	\[
	i_3+1 = 2k=j-1.
	\]
	Writing
	\[
	8i_3 = 8j-4i_2 -8 = 8j-16
	\]
	and dividing by 8 shows that indeed $i_3 = j-2$. This shows that $\zeta_1^{2i_3+2}$ is indeed a generator of $v_0^{-1}\Ext_{E(2)_*}(\Sigma^{4k}\buu_k)$. This completes the case when $i_2\neq 0$. 
	
	Now we will consider the case when $i_2=0$. Then we need to show that the monomials $\zeta_1^{4}\zeta_2^{2j-2}$ and $\zeta_2^{2j}$ are generators of $v_0^{-1}\Ext_{E(2)_*}(\Sigma^{4j}\buu_j)$. Let $k:= \floor{j/2}$. We again need to separate into the subcases when $2k=j$ and $2k=j-1$. 
	
	Suppose first that $2k=j$. Then we have the sequence
	\begin{multline*}
			0\to \Sigma^{12k}\buu_k\to \Sigma^{8k}\buu_j\to \Sigma^{8k}\E2E1\otimes \tBPu{2}_{2k-1}\\
			\to \Sigma^{12k+5}\buu_{k-1}\to 0.
	\end{multline*}
	In this case, the monomial $\zeta_1^4\zeta_2^{2j-2}$ would have to be a generator originating from the summand $v_0^{-1}\Ext_{E(1)_*}(\Sigma^{4j}\tBPu{2}_{2k-1})$. In order to check that it is indeed a generator, it just needs to be observed that 
	\[
	4+4j-4 = 4j
	\]
	and that 
	\[
	j-1 = 2k-1.
	\]
	For the monomial $\zeta_2^{2j}$ to be a generator, it would have to be a generator originating from $v_0^{-1}\Ext_{E(2)_*}(\Sigma^{4k}\buu_k)$, i.e. we need to check that $\zeta_1^{2j}$ is a generator for $v_0^{-1}\Ext_{E(2)_*}(\Sigma^{4k}\buu_k)$. This is immediate since $2j =4k$.
	
	So finally consider the subcase when $2k = j-1$. Then we have an exact sequence 
	\[
	0\to \Sigma^{12k}\buu_k\otimes \buu_j\to \Sigma^{8k}\buu_j\to \Sigma^{8k} \E2E1\otimes \tBPu{2}_{2k-1}\to 0.
	\]
	In this case, we would have to show that $\zeta_1^4\zeta_2^{2j-2}$ and $\zeta_2^{2j}$ are generators originating from the summand $v_0^{-1}\Ext_{E(2)_*}(\Sigma^{8k}\buu_k\otimes \buu_1)$. In light of Proposition \ref{inductive_generators}, this will follow once it is shown that $\zeta_1^{2j-2}$ is a generator for 
	\[
	v_0^{-1}\Ext_{E(2)_*}(\Sigma^{4k}\buu_k).
	\] 
	This is immediate since 
	\[
	2j-2 = 4k,
	\]
	which completes the proof in the case $i_2=0$.
\end{proof}

We will now discuss the hidden $v_2$-extension in the spectral sequences \eqref{oddspecseq}. Many of the arguments are similar to those in the proof of the previous proposition, and the general strategy is the same.

\begin{prop}
Suppose $\zeta_1^a\zeta_2^{2i_2}\zeta_3^{2i_3}$ is a generator for 
\[
v_0^{-1}\Ext_{E(1)}(\Sigma^{8j+4}\tBPu{2}_{2j-1})
\] 
with $a>12$. Then the monomials 
\[
\zeta_1^{a-8}\zeta_2^{2i_2}\zeta_3^{2i_3+2}, \zeta_1^{a-8}\zeta_2^{2i_2+4}\zeta_3^{2i_3}
\]
are also generators for 
\[
v_0^{-1}\Ext_{E(1)}(\Sigma^{8j+4}\tBPu{2}_{2j-1}).
\]
\end{prop}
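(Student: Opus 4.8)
The plan is to run the exact analogue of the argument for the even case (the preceding proposition), with the constant $8$ replaced by $12$; the only input needed is the explicit description of the generators in Lemma \ref{BPgenerators}. Recall from that lemma that the generators of $v_0^{-1}\Ext_{E(1)}(\Sigma^{8j+4}\tBPu{2}_{2j-1})$ as a module over $\F_2[v_0^{\pm1},v_1]$ are precisely the monomials $\zeta_1^b\zeta_2^{2k_2}\zeta_3^{2k_3}$ with $k_2+2k_3\leq 2j-1$ and $b=8j+4-4k_2-8k_3$. So the whole statement reduces to checking that the two proposed monomials satisfy these two conditions.

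First I would translate the hypothesis $a>12$. Since $\zeta_1^a\zeta_2^{2i_2}\zeta_3^{2i_3}$ is a generator, $a=8j+4-4i_2-8i_3$, so $a>12$ gives $4i_2+8i_3<8j-8$, hence $i_2+2i_3<2j-2$, and as $i_2+2i_3$ is an integer, $i_2+2i_3\leq 2j-3$. Next I would verify the two conditions for each candidate. For $\zeta_1^{a-8}\zeta_2^{2i_2}\zeta_3^{2i_3+2}$ the relevant exponents are $(k_2,k_3)=(i_2,i_3+1)$, and $k_2+2k_3=i_2+2i_3+2\leq 2j-1$ by the bound just obtained, while $8j+4-4k_2-8k_3=8j+4-4i_2-8i_3-8=a-8$ is an identity; so by Lemma \ref{BPgenerators} it is a generator. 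For $\zeta_1^{a-8}\zeta_2^{2i_2+4}\zeta_3^{2i_3}$ the exponents are $(k_2,k_3)=(i_2+2,i_3)$, and $k_2+2k_3=i_2+2+2i_3\leq 2j-1$ again by the bound, while $8j+4-4(i_2+2)-8i_3=a-8$; so this too is a generator.

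I expect no real obstacle here: the content is purely the arithmetic passage from "$a>12$'' to the defining inequality $i_2+2i_3\leq 2j-3$, which is parallel to the $a>8$ case for $\Sigma^{8j}\buu_{2j}$ — the larger threshold $12$ simply reflects the extra suspension $\Sigma^{8j+4}$ and the $\zeta_1^4$-shift coming from the $\buu_1$ tensor factor in the odd exact sequence \eqref{odd_exact_sequence}. The one point worth an explicit remark is that both new monomials still have weight $8j+4$ (replacing $\zeta_1^8\zeta_3^{2i_3}$ by $\zeta_1^0\zeta_3^{2i_3+2}$, or $\zeta_1^8\zeta_2^{2i_2}$ by $\zeta_1^0\zeta_2^{2i_2+4}$, each changes the weight by $-8+8=0$), so they genuinely lie in $v_0^{-1}\Ext_{E(1)}(\Sigma^{8j+4}\tBPu{2}_{2j-1})$ and not in some other suspension, and Lemma \ref{BPgenerators} applies verbatim.
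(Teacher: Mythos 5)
Your proof is correct and follows essentially the same route as the paper: both reduce the claim via Lemma \ref{BPgenerators} to the single inequality $i_2+2i_3+2\leq 2j-1$, which follows from $a=8j+4-4i_2-8i_3>12$. Your extra checks (the exponent identity $8j+4-4k_2-8k_3=a-8$ and the weight bookkeeping) are fine but implicit in the paper's argument.
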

\begin{proof}
It needs to be shown that 
\[
i_2+2i_3+2\leq 2j-1
\]
Since $a>12$ we have
\[
a=8j+4-4i_2-8i_3>12
\]
which implies
\[
i_2+2i_3<2j-2
\]
and hence
\[
i_2+2i_3+2<2j
\]
which proves the proposition.
\end{proof}

\begin{prop}
Suppose $\zeta_1^{12}\zeta_2^{2i_2}\zeta_3^{2i_3}$ is a generator for 
\[
v_0^{-1}\Ext_{E(1)}(\Sigma^{8j+4}\tBPu{2}_{2j-1}).
\] 
Then the monomials 
\[
\zeta_1^{4}\zeta_2^{2i_2}\zeta_3^{2i_3+2}, \zeta_1^{4}\zeta_2^{2i_2+4}\zeta_3^{2i_3}
\]
are generators for the summand $v_0^{-1}\Ext_{E(2)}(\Sigma^{12j+4}\buu_j\otimes\buu_1)$.
\end{prop}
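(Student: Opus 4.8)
The plan is to reduce this statement to Proposition~\ref{crossing-extensions-even}, the point being that the numerical constraint coming from the hypothesis is \emph{identical} to the one appearing there. By Lemma~\ref{BPgenerators}, the assumption that $\zeta_1^{12}\zeta_2^{2i_2}\zeta_3^{2i_3}$ generates $v_0^{-1}\Ext_{E(1)}(\Sigma^{8j+4}\tBPu{2}_{2j-1})$ forces
\[
12 = 8j+4-4i_2-8i_3,
\]
equivalently $8+4i_2+8i_3 = 8j$, i.e.\ $i_2+2i_3 = 2j-2$ with $i_2$ even. This is exactly the relation satisfied by the exponents of a generator $\zeta_1^{8}\zeta_2^{2i_2}\zeta_3^{2i_3}$ of $v_0^{-1}\Ext_{E(1)}(\Sigma^{8j}\tBPu{2}_{2j-1})$ in Proposition~\ref{crossing-extensions-even}, since there $8 = 8j-4i_2-8i_3$ rearranges to the same equation $8+4i_2+8i_3=8j$.

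Next I would unwind what it means for the two target monomials to be generators of the stated summand. Each of $\zeta_1^4\zeta_2^{2i_2}\zeta_3^{2i_3+2}$ and $\zeta_1^4\zeta_2^{2i_2+4}\zeta_3^{2i_3}$ carries $\zeta_1$ to exactly the fourth power, so by Remark~\ref{rmk: generators from where} they can only arise from the inductive summand \eqref{odd-inductive-summand}, namely $v_0^{-1}\Ext_{E(2)}(\Sigma^{12j+4}\buu_j\otimes\buu_1)$. By Proposition~\ref{inductive_generators}, the generators of that summand are precisely $\{\zeta_2^{i_1}\zeta_3^{i_2}\cdots\}\cdot\{\zeta_1^4,\zeta_2^2\}$, where $\zeta_1^{i_1}\zeta_2^{i_2}\cdots$ ranges over generators of $v_0^{-1}\Ext_{E(2)}(\Sigma^{4j}\buu_j)$ and the leftmost factor is obtained via the index shift $\zeta_k\mapsto\zeta_{k+1}$. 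Stripping off the factor $\zeta_1^4$ and undoing the shift, it therefore suffices to prove that
\[
\zeta_1^{2i_2}\zeta_2^{2i_3+2}\qquad\text{and}\qquad\zeta_1^{2i_2+4}\zeta_2^{2i_3}
\]
are generators of $v_0^{-1}\Ext_{E(2)}(\Sigma^{4j}\buu_j)$.

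These, however, are exactly the two monomials whose generation is established in the proof of Proposition~\ref{crossing-extensions-even}, and that proof uses nothing about the hypothesis beyond the identity $8+4i_2+8i_3=8j$ together with the resulting parity of $i_2$ — both of which we have just verified hold here. Hence the same case analysis applies verbatim: split on the parity of $j$ via $k=\floor{j/2}$, then on whether $i_2=0$, $i_2=2$, or $i_2>2$, tracking in each case which of the exact sequences \eqref{odd_exact_sequence}, \eqref{even_exact_sequence} computes $v_0^{-1}\Ext_{E(2)}(\Sigma^{4j}\buu_j)$ and checking the weight and index bounds exactly as before. There is thus no genuinely new obstacle; the only thing requiring care is the bookkeeping that matches the odd-case constraint $a=12$ with the even-case constraint $a=8$ and that aligns the index shift of Proposition~\ref{inductive_generators} with the monomials occurring in the putative $v_2$-relation \eqref{desiredrel}. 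If a self-contained proof were preferred, one could instead simply repeat the case analysis of Proposition~\ref{crossing-extensions-even} line for line.
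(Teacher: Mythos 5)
Your proposal is correct and follows essentially the same route as the paper: reduce, via Proposition~\ref{inductive_generators} and Remark~\ref{rmk: generators from where}, to showing that $\zeta_1^{2i_2}\zeta_2^{2i_3+2}$ and $\zeta_1^{2i_2+4}\zeta_2^{2i_3}$ generate $v_0^{-1}\Ext_{E(2)_*}(\Sigma^{4j}\buu_j)$, and then note that the case analysis of Proposition~\ref{crossing-extensions-even} applies verbatim since the hypothesis yields the same identity $8+4i_2+8i_3=8j$ (and hence the same parity of $i_2$). You simply spell out the reduction and the \emph{mutatis mutandis} step in more detail than the paper does.
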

\begin{proof}
	To prove the proposition, it needs to be checked that the monomials $\zeta_1^{2i_2}\zeta_2^{2i_3+2}$ and $\zeta_1^{2i_2+4}\zeta_2^{2i_3}$ are generators of $v_0^{-1}\Ext_{E(2)_*}(\Sigma^{4j}\buu_j)$. The proof breaks down as in the proof of Proposition \ref{crossing-extensions-even}, \textit{mutatis mutandis}. 
\end{proof}

The following proposition deals with the last cases of $v_2$-extensions in the rational inductive calculations. 

\begin{prop}
Let $\zeta_1^8\zeta_2^{2i_2}\zeta_3^{2i_3}$ be a generator of $v_0^{-1}\Ext_{E(2)}(\Sigma^{8j+4}\buu_{2j+1})$, then $\zeta_2^{2i_2+4}\zeta_3^{2i_3}$ and $\zeta_2^{2i_2}\zeta_3^{2i_3+2}$ are generators of $v_0^{-1}\Ext_{E(2)}(\Sigma^{8j+4}\buu_{2j+1})$.
\end{prop}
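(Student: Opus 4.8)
\emph{Proof proposal.} The plan is to follow the proof of Proposition~\ref{crossing-extensions-even} \emph{mutatis mutandis}, with the odd Brown--Gitler comodule $\buu_{2j+1}$ in place of $\buu_{2j}$. First I would extract the arithmetic of the hypothesis. By Lemma~\ref{BPgenerators}, the generator $\zeta_1^8\zeta_2^{2i_2}\zeta_3^{2i_3}$ of $v_0^{-1}\Ext_{E(1)}(\Sigma^{8j+4}\tBPu{2}_{2j-1})$ satisfies $8 = a = 8j+4-4i_2-8i_3$, so $i_2+2i_3 = 2j-1$; in particular $i_2$ is odd, hence $i_2\geq 1$. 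Next, since neither $\zeta_2^{2i_2+4}\zeta_3^{2i_3}$ nor $\zeta_2^{2i_2}\zeta_3^{2i_3+2}$ involves any power of $\zeta_1$, Remark~\ref{rmk: generators from where} forces these monomials, if they are generators of $v_0^{-1}\Ext_{E(2)}(\Sigma^{8j+4}\buu_{2j+1})$ at all, to originate from the inductive summand~\eqref{odd-inductive-summand}, namely $v_0^{-1}\Ext_{E(2)}(\Sigma^{12j+4}\buu_j\otimes\buu_1)$.

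By Proposition~\ref{inductive_generators}, the generators of this summand have the form $\{\zeta_2^{e_1}\zeta_3^{e_2}\cdots\}\cdot\{\zeta_1^4,\zeta_2^2\}$, where $\zeta_1^{e_1}\zeta_2^{e_2}\cdots$ runs over the generators of $v_0^{-1}\Ext_{E(2)}(\Sigma^{4j}\buu_j)$. As our two monomials carry no $\zeta_1$, each must absorb the $\zeta_2^2$ factor, so the statement reduces to showing that
\[
\zeta_1^{2i_2+2}\zeta_2^{2i_3}\qquad\text{and}\qquad\zeta_1^{2i_2-2}\zeta_2^{2i_3+2}
\]
are generators of $v_0^{-1}\Ext_{E(2)}(\Sigma^{4j}\buu_j)$; here the inequality $i_2\geq 1$ is exactly what guarantees that the exponent $2i_2-2$ is nonnegative. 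A one-line check using $i_2+2i_3 = 2j-1$ shows both monomials have weight $4j$, so they do lie in $M_2(2j)\cong\Sigma^{4j}\buu_j$.

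From this point the argument is identical to the remainder of the proof of Proposition~\ref{crossing-extensions-even}. Writing $k = \floor{j/2}$, I would split according to whether $j = 2k$ or $j = 2k+1$, using the $\Sigma^{4j}$-shift of the exact sequence~\eqref{even_exact_sequence} in the former case and of~\eqref{odd_exact_sequence} in the latter to describe $v_0^{-1}\Ext_{E(2)}(\Sigma^{4j}\buu_j)$, and then identifying each candidate monomial as a generator --- either in the $\tBPu{2}_{2k-1}$-summand via Lemma~\ref{BPgenerators}, or, when the relevant $\zeta_1$-exponent is $4$ or $0$, one level deeper in the inductive summand via Remark~\ref{rmk: generators from where} and Proposition~\ref{inductive_generators}, where the same bookkeeping recurs. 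In every (sub)case the content is just a weight identity together with an index bound of the shape $e_2+\cdots\leq 2k-1$, each of which collapses, as before, to the relation $i_2+2i_3 = 2j-1$. Thus the main obstacle is organizational rather than conceptual: tracking the descending cascade of cases --- the parity of $j$, then whether $2i_2\pm2$ meets the boundary value $4$ or $0$ --- which is precisely what made Proposition~\ref{crossing-extensions-even} lengthy; the standing observation $i_2\geq1$ is the safeguard ensuring no genuinely negative exponent ever appears.
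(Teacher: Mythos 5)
Your proposal is correct and takes essentially the same route as the paper: the paper likewise uses Remark \ref{rmk: generators from where} and Proposition \ref{inductive_generators} to reduce the claim to showing that $\zeta_1^{2i_2+2}\zeta_2^{2i_3}$ and $\zeta_1^{2i_2-2}\zeta_2^{2i_3+2}$ are generators of $v_0^{-1}\Ext_{E(2)_*}(\Sigma^{4j}\buu_j)$ (using $i_2+2i_3=2j-1$, so $i_2$ is odd and $\geq 1$), and then splits on the parity of $j$ and the size of $i_2$. The only difference is that the paper carries out the boundary subcases explicitly ($i_2\geq 3$ versus $i_2=1$ when $j=2k$, and $i_2>3$, $i_2=3$, $i_2=1$ when $j=2k+1$), while you defer them by analogy with Proposition \ref{crossing-extensions-even}; those checks do indeed reduce to weight identities and index bounds coming from $i_2+2i_3=2j-1$, as you assert.
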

\begin{proof}
We have the short exact sequence
\begin{multline*}
	0\to \Sigma^{12j+4}\buu_j\otimes \buu_1\to \Sigma^{8j+4}\buu_{2j+1}\\\to \Sigma^{8j+4}(E(2)\sslash E(1))_*\otimes \tBPu{2}_{2j-1}\to 0.
\end{multline*}
Note that we have 
\[
i_2+2i_3 =2j-1
\]
and so $i_2$ must be an odd natural number. From Proposition \ref{inductive_generators} and Remark \ref{rmk: generators from where}, to show that $\zeta_2^{2i_2+4}\zeta_3^{2i_3}$ and $\zeta_2^{2i_2}\zeta_3^{2i_3+2}$ are generators,  we need to show that $\zeta_1^{2i_2+2}\zeta_2^{2i_3}$ and $\zeta_1^{2i_2-2}\zeta_2^{2i_3+2}$ are generators
of $v_0^{-1}\Ext_{E(2)}(\Sigma^{4j}\buu_j)$. Let $k=\floor{j/2}$, so that $j=2k+\epsilon$. 

Consider first the case when $\epsilon=0$. Then we have the sequence
\begin{multline*}
0\to \Sigma^{12k}\buu_k\to \Sigma^{8k}\buu_{j}\\ 
\to \Sigma^{8k}\E2E1\otimes \tBPu{2}_{2k-1}\to \Sigma^{12k+5}\buu_{k-1}\to 0.
\end{multline*}
To show that $\zeta_1^{2i_2+2}\zeta_2^{2i_3}$ and $\zeta_1^{2i_2-2}\zeta_2^{2i_3+2}$ are generators of $v_0^{-1}\Ext_{E(2)_*}(\Sigma^{4j}\buu_j)$, we will consider several subcases. Consider the case when $i_2\geq 3$. In this case, for these monomials to be generators, they would have to be generators of the $BP$ summand 
\[
v_0^{-1}\Ext_{E(1)_*}(\Sigma^{8k}\tBPu{2}_{2k-1})\subseteq v_0^{-1}\Ext_{E(2)_*}(\Sigma^{4j}\buu_j).
\]
To show these monomials are indeed generators in this summand, it just needs to be checked that 
\[
2i_2+2+4i_3=8k=4j
\]
and that
\[
i_3\leq 2k-1
\]
for the first monomial and 
\[
i_3+1\leq 2k-1
\]
for the second. Note that the first condition follows from the fact that $i_2+2i_3=2j-1$. Since $i_2\geq 3$, it follows that 
\[
2i_3\leq 2j-4
\]
and hence 
\[
i_3\leq j-2=2k-2,
\]
which shows that both monomials are generators in this case. 

Consider the sub-case when $i_2=1$, then we need to show that $\zeta_1^{4}\zeta_2^{2i_3}$ and $\zeta_2^{2i_3+2}$ are generators. Note that 
\[
2i_3 = 2j-2
\]
and so 
\[
i_3 = j-1 = 2k-1.
\]
The monomial $\zeta_1^4\zeta_2^{4k-2}$ is a generator of the summand 
\[
v_0^{-1}\Ext_{E(1)_*}(\Sigma^{4j}\tBPu{2}_{2k-1})
\] 
by Proposition \ref{BPgenerators}.

For $\zeta_2^{2j}$ to be a generator, it would have to be a generator for the summand $v_0^{-1}\Ext_{E(2)}(\Sigma^{4j+4k}\buu_k)$. This follows since $\zeta_1^{2j}$ is a generator for $v_0^{-1}\Ext_{E(2)}(\Sigma^{4k}\buu_k)$.

Now suppose that $\epsilon=1$, so that we have an exact sequence
\[
0\to \Sigma^{4k+4j}\buu_k\otimes \buu_1\to \Sigma^{4j}\buu_j\to \Sigma^{4j}(E(2)\sslash E(1))_*\otimes \tBPu{2}_{2k-1}\to 0
\]
We have to consider three cases: $i_2> 3, i_2=3, i_2=1$. 

First suppose $i_2> 3$. Note that $2i_2-2>4$ in this case. Thus for the monomials $\zeta_1^{2i_2+2}\zeta_2^{2i_3}$ and $\zeta_1^{2i_2-2}\zeta_2^{2i_3+2}$ to be generators, they would have to come from the summand $v_0^{-1}\Ext_{E(1)}(\Sigma^{4j}\tBPu{2}_{2k-1})$. Thus we need to check that $i_3+1\leq 2k-1$. Since $i_2\geq 3$, we get 
\[
2i_3< 2j-1-3 = 2j-4
\]
which shows that 
\[
i_3<j-2 = (2k+1)-2 = 2k-1
\]
and this proves the case $i_2>3$. 

So suppose now that $i_2=3$. Then the proof for the first monomial is the same as above. The second monomial becomes $\zeta_1^4\zeta_2^{2i_3+2}$, and so would have to come from the inductive term $v_0^{-1}\Ext_{E(2)_*}(\Sigma^{4k+4j}\buu_k\otimes \buu_1)$. So we need to show that $\zeta_1^{2i_3+2}$ is a generator for $v_0^{-1}\Ext_{E(2)}(\Sigma^{4k}\buu_k)$. This follows from the fact that, in this case,
\[
2i_3+2 = 2j-4+2 = 2j-2 = 4k+2-2 = 4k
\]
and this finishes the case when $i_2=3$. 

Finally, suppose that $i_2 = 1$. For both monomials, we need to show that $\zeta_1^{2i_3}$ is a generator of $v_0^{-1}\Ext_{E(2)}(\Sigma^{4k}\buu_k)$. This follows from the fact that in this case
\[
2i_3 = 2j-1-1 = 2j-2 = 4k
\]
This proves the case when $i_2=1$ and finishes the proposition. 
\end{proof} 


We briefly mention how to infer the integral calculations from the rational calculations done above. 

\begin{rmk}	\label{rmk: integral calculations}
By Theorem \ref{mainSS2}, we know that there is an injection
\[
\Ext_{E(2)_*}(\AE{2})/v_2\text{-tors}\hookrightarrow v_0^{-1}\Ext_{E(2)_*}(\AE{2})
\]
and 
\[
\Ext_{E(2)_*}(\Sigma^{8j+4\epsilon}\buu_{2j+\epsilon})/v_2\text{-tors}\hookrightarrow v_0^{-1}\Ext_{E(2)_*}(\Sigma^{8j+4\epsilon}\buu_{2j+\epsilon})
\]
Recall that, integrally, $\Ext_{E(1)_*}(\tBPu{2}_{2j-1})$ decomposed into a sum of suspensions of Adams covers of $\Ext_{E(1)_*}(\F_2)$, and that inverting $v_0$ on each cover reduces it to a copy of $v_0^{-1}\Ext_{E(1)_*}(\F_2)$. To recover the Adams covers one simply uses the algorithm described in Remark \ref{rmk:adams covers}. By Proposition \ref{inductiveSScollapse}, we can conclude that the rational generators produced above along with their associated Adams covers gives a basis of $\Ext_{E(2)_*}(\Sigma^{8j+4\epsilon}\buu_{2j+\epsilon})/v_0\text{-tors}$ as a module over $\F_2[v_0]$.
\end{rmk}

\subsection{Low degree computations}

In this section, we will provide examples of low degree computations using the inductive methods developed in the previous section. We tabulate the generators of the spectral sequences for low dimensional cases of $\buu_j$. In the tables below, the summands of the form $\E2E1\otimes -$ are understood as being generators over $\F_2[v_0^{\pm 1}, v_1]$, while all other summands are generators over $\F_2[v_0^{\pm1}, v_1, v_2]$. In the table below, generators having a hidden $v_2$-extension are indicated in red.

\begin{alignat*}{3}
	\buu_0: & \quad &\F_2: &\quad &&1\\
	\Sigma^4\buu_1: && \Sigma^4 \buu_1: &&& \zeta_1^4, \zeta_2^2 \\
	\Sigma^8\buu_2: && \Sigma^{8}\E2E1\otimes \tBPu{2}_1: &&& \zeta_1^8, \textcolor{red}{\zeta_1^4\zeta_2^2}\\
	&& \Sigma^{12}\buu_1: &&& \zeta_2^4, \zeta_3^2\\
	&& \Sigma^{17}\buu_0[1]: &&& \textcolor{red}{v_2\zeta_1^4\zeta_2^2+\cdots} \\
	\Sigma^{12}\buu_3: && \Sigma^{12}\E2E1\otimes \tBPu{2}_1: &&& \zeta_1^{12}, \zeta_1^8\zeta_2^2\\
	&& \Sigma^{16}\buu_1\otimes \buu_1: &&& \{\zeta_2^4, \zeta_3^2\}\cdot \{\zeta_1^4, \zeta_2^2\}\\
	\Sigma^{16}\buu_4: && \Sigma^{16}\E2E1\otimes \tBPu{2}_3: &&& \zeta_1^{16}, \zeta_1^{12}\zeta_2^2, \zeta_1^8\zeta_2^4, \zeta_1^8\zeta_3^2, \textcolor{red}{\zeta_1^4\zeta_2^6, \zeta_1^4\zeta_2^2\zeta_3^2}\\
	&& \Sigma^{24}\E2E1\otimes \tBPu{2}_1: &&& \zeta_2^8, \zeta_2^4\zeta_3^2\\
	&& \Sigma^{28}\buu_1: &&& \zeta_3^4, \zeta_4^2\\
	&& \Sigma^{33}\buu_0[1] &&& v_2\zeta_2^4\zeta_3^2+\cdots \\
	&& \Sigma^{29}\buu_1[1]: &&& \textcolor{red}{v_2\zeta_1^4\zeta_2^6+\cdots, v_2\zeta_1^4\zeta_2^2\zeta_3^2+\cdots} \\
	\Sigma^{20}\buu_5: && \Sigma^{20}\E2E1\otimes \tBPu{2}_3: &&& \zeta_1^{20}, \zeta_1^{16}\zeta_2^2, \zeta_1^{12}\zeta_2^4, \zeta_1^8\zeta_2^6, \zeta_1^8\zeta_2^2\zeta_3^2\\
	&& \Sigma^{28}\E2E1\otimes \tBPu{2}_1\otimes \buu_1: &&& \{\zeta_2^8, \zeta_2^4\zeta_3^2\}\cdot \{\zeta_1^4, \zeta_2^2\}\\
	&& \Sigma^{32}\buu_1\otimes \buu_1: &&& \{\zeta_3^4, \zeta_4^2\}\cdot \{\zeta_1^4, \zeta_2^2\}\\
	&& \Sigma^{27}\buu_1[1]: &&& \{v_2\zeta_2^4\zeta_3^2+\cdots\}\cdot\{\zeta_1^4, \zeta_2^2\}\\
	\Sigma^{24}\buu_6: && \Sigma^{24}\E2E1\otimes \tBPu{2}_{5}: &&& \zeta_1^{24}, \zeta_1^{20}\zeta_2^2, \zeta_1^{16}\zeta_2^4, \zeta_1^{16}\zeta_3^2, \zeta_1^{12}\zeta_2^6, \\
	&& &&& \zeta_1^{12}\zeta_2^2\zeta_3^2, \zeta_1^8\zeta_2^8, \zeta_1^{8}\zeta_2^4\zeta_3^2, \zeta_1^8\zeta_3^4, \\
	&& &&& \textcolor{red}{\zeta_1^4\zeta_2^{10},\zeta_1^4\zeta_2^{6}\zeta_3^2, \zeta_1^4\zeta_2^2\zeta_3^4}\\
	&& \Sigma^{36}\E2E1\otimes \tBPu{2}_3: &&& \zeta_2^{12}, \zeta_2^8\zeta_3^2\\
	&& \Sigma^{40}\buu_1\otimes \buu_1: &&& \{\zeta_3^4, \zeta_4^2\}\cdot\{\zeta_2^4, \zeta_3^2\}\\
	&& \Sigma^{41}\buu_2[1]: &&& \textcolor{red}{v_2\zeta_1^4\zeta_2^{10}+\cdots, v_2\zeta_1^4\zeta_2^6\zeta_3^2+\cdots}\\
	&& &&& \textcolor{red}{v_2\zeta_1^4\zeta_2^2\zeta_3^4+\cdots }\\
	\Sigma^{28}\buu_7: && \Sigma^{28}\E2E1\otimes \tBPu{2}_5: &&& \zeta_1^{28}, \zeta_1^{24}\zeta_2^2, \zeta_1^{20}\zeta_2^4, \zeta_1^{20}\zeta_3^2, \zeta_1^{16}\zeta_2^6, \\
	&& &&& \zeta_1^{16}\zeta_2^2\zeta_3^2, \zeta_1^{12}\zeta_2^8, \zeta_1^{12}\zeta_2^4\zeta_3^2, \zeta_1^{12}\zeta_3^4, \\
	&& &&& \zeta_1^8\zeta_2^{10},\zeta_1^8\zeta_2^{6}\zeta_3^2, \zeta_1^8\zeta_2^2\zeta_3^4\\
	&& \Sigma^{40}\E2E1\otimes \tBPu{2}_1\otimes \buu_1: &&& \{\zeta_2^{12}, \zeta_2^8\zeta_3^2\}\cdot\{\zeta_1^4, \zeta_2^2\}\\
	&& \Sigma^{44}\buu_1^{\otimes 3}: &&& \{\zeta_3^4, \zeta_4^2\}\cdot \{\zeta_2^4, \zeta_3^2\}\cdot \{\zeta_1^4, \zeta_2^2\}
\end{alignat*}

Below are charts for the spectral sequences \eqref{evenspecseq} and \eqref{oddspecseq}. In the charts below, we will use the following key.\\

\begin{center}
 \begin{tabular}{||c | c ||} 
 \hline
 Symbol & Ring \\ [0.5ex] 
 \hline\hline
 $\circ$ & $\F_2[v_0^{\pm1},v_1]$ \\ [0.6ex]
 \hline
 $\triangle$ & $\F_2[v_0^{\pm1},v_1,v_2]$ \\ [0.6ex] 
 \hline
\end{tabular}
\end{center}
\mbox{}
	
In the charts below, the following pattern

\begin{center}
	\begin{sseq}[entrysize=10mm, grid=chess, labels=none]
{0...6}{0...1}

\ssmoveto{0}{0}
\ssdrop{\bullet}
\ssdroplabel{x}
\ssname{x}

\ssmove{4}{0}
\ssdrop{\bullet}
\ssdroplabel{y}
\ssname{y}

\ssmove{2}{0}
\ssdrop{\bullet}
\ssdroplabel{z}
\ssname{z}

\ssmove{0}{1}
\ssdrop{\bullet}
\ssname{v}

\ssgoto{x}\ssgoto{v}\ssstroke
\ssgoto{y}\ssgoto{v}\ssstroke
\ssgoto{z}\ssgoto{v}\ssstroke
\end{sseq}
\end{center}

will denote a relation of the form
\[
v_2x = v_1y+v_0z.
\]
In particular, lines of slope 1/6 denote multiplication by $v_2$, lines of slope 1/2 denotes multiplication by $v_1$, and vertical lines denote multiplication by $v_0$.  

\newpage 

\begin{figure}[!htbp]
\includegraphics{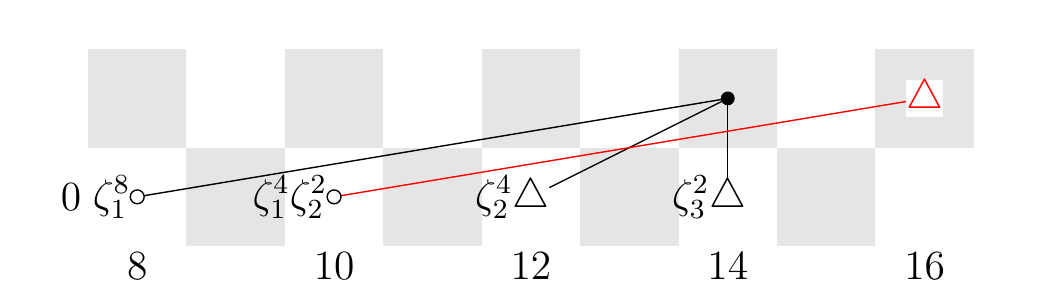}
\caption{$v_0^{-1}\Ext_{E(2)_*}(\Sigma^8\buu_2)$}
\centering	
\end{figure}

\begin{figure}[!htbp]
	\includegraphics{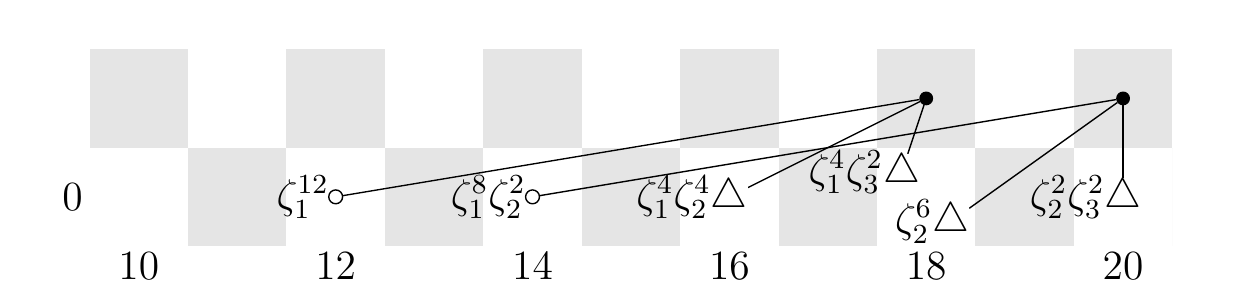}
	\caption{$v_0^{-1}\Ext_{E(2)_*}(\Sigma^{12}\buu_3)$}
	\centering
\end{figure}

\begin{figure}[!htbp]
	\includegraphics[width=\textwidth]{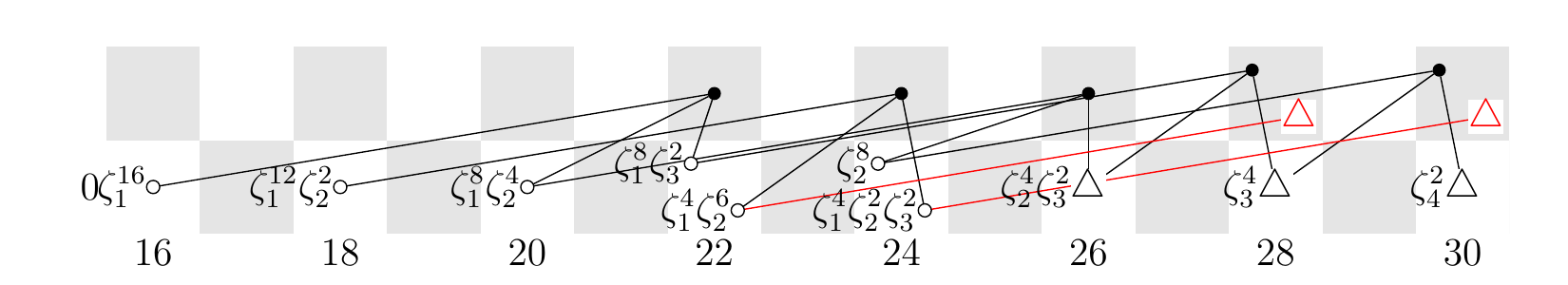}
	\caption{$v_0^{-1}\Ext_{E(2)_*}(\Sigma^{16}\buu_4)$}
\end{figure}

\begin{figure}[!htbp]
\centering
\begin{subfigure}{0.49\textwidth}
	\includegraphics[angle=90,height=\textheight]{bu4}
	\caption{$\protect v_0^{-1}\Ext_{E(2)_*}(\Sigma^{16}\buu_4)$}
\end{subfigure}
\begin{subfigure}{0.49\textwidth}
	\includegraphics[angle=90,height=\textheight]{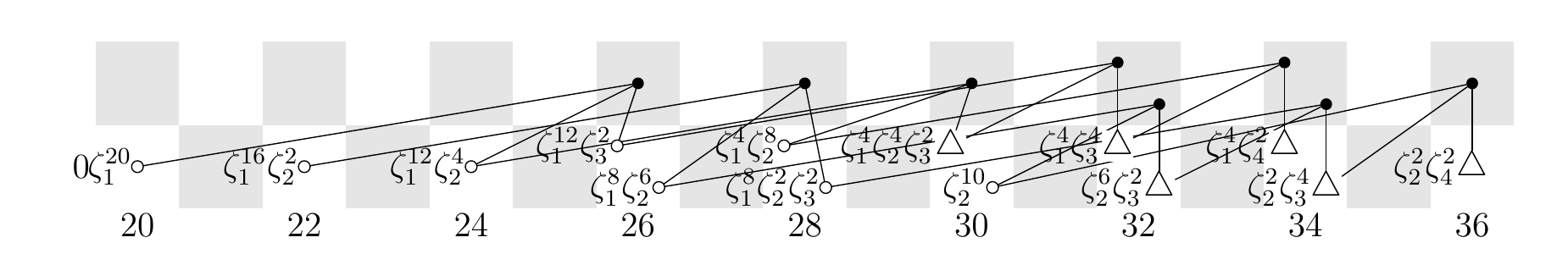}
	\caption{$\protect v_0^{-1}\Ext_{E(2)_*}(\Sigma^{20}\buu_5)$}
\end{subfigure}
\caption{}
\end{figure}

\begin{figure}[!htbp]
\centering
\begin{subfigure}{0.3\textwidth}
	\includegraphics[angle=90,height=\textheight]{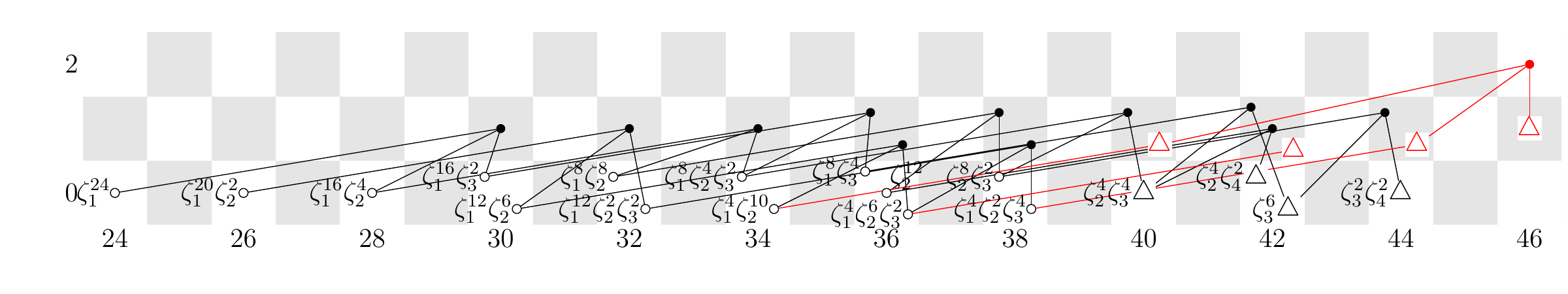}
	\caption{$\protect v_0^{-1}\Ext_{E(2)_*}(\Sigma^{24}\buu_6)$}
\end{subfigure}
\begin{subfigure}{0.3\textwidth}
	\includegraphics[angle=90,height=\textheight]{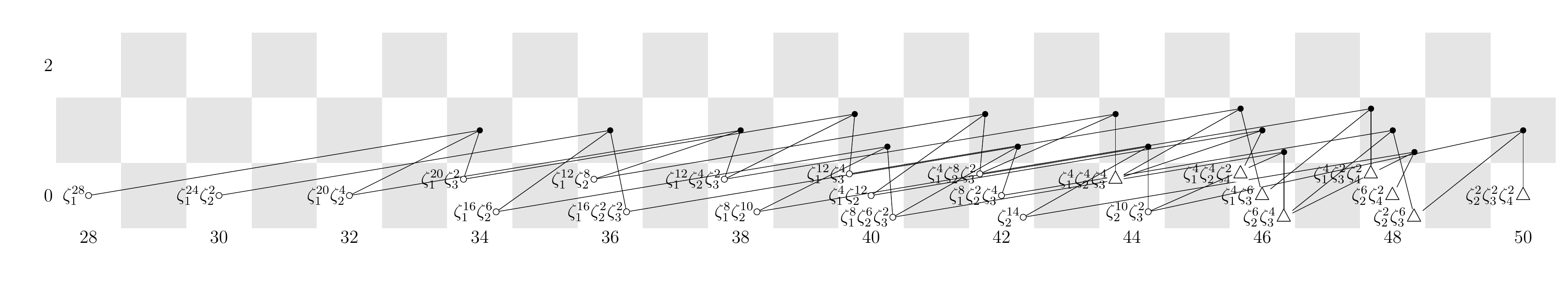}
	\caption{$\protect v_0^{-1}\Ext_{E(2)_*}(\Sigma^{28}\buu_7)$}
\end{subfigure}
\begin{subfigure}{0.3\textwidth}
	\includegraphics[angle=90, height=\textheight]{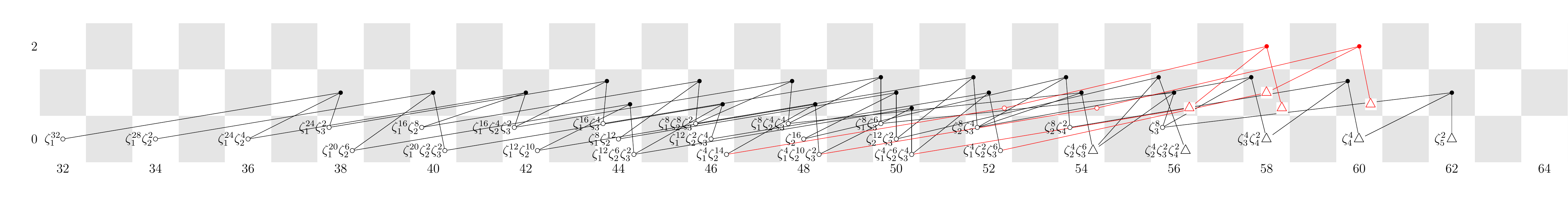}
	\caption{$\protect v_0^{-1}\Ext_{E(2)_*}(\Sigma^{32}\buu_8)$}
\end{subfigure}
\caption{}
\end{figure}

\FloatBarrier 

%
%


\clearpage 

\bibliographystyle{plain}
\bibliography{thesis}
\end{document}